\newtheorem{theorem}{Theorem}
\DeclareMathOperator*{\argmin}{\mathrm{arg\,min}}
\newcommand{\Au}{\mathcal{A}[u]}
\newcommand{\ut}{u_{\theta}}
\newcommand{\Aut}{\mathcal{A}[u_{\theta}]}
\newcommand{\phieta}{\varphi_{\eta}}
\newcommand{\veta}{v_{\eta}}
\newcommand{\Lint}{L_{\text{int}}}
\newcommand{\Lbdry}{L_{\text{bdry}}}
\newcommand{\Linit}{L_{\text{init}}}
\newcommand{\Bu}{\mathcal{B}[u]}
\newcommand{\Rbb}{\mathbb{R}}
\title{Weak Adversarial Networks for High-dimensional Partial Differential Equations}
\author{Yaohua Zang
\footnote{School of Mathematical Sciences, Zhejiang University, Hangzhou, Zhejiang 310007, China. Email: \texttt{11535015@zju.edu.cn}.}
\and
Gang Bao
\footnote{School of Mathematical Sciences, Zhejiang University, Hangzhou, Zhejiang 310007, China. Email: \texttt{baog@zju.edu.cn}.}
\and
Xiaojing Ye
\footnote{Department of Mathematics and Statistics, Georgia State University, Atlanta, GA 30303, USA.
Email: \texttt{xye@gsu.edu}.}
\and
Haomin Zhou
\footnote{School of Mathematics, Georgia Institute of Technology, Atlanta, GA 30332, USA.
Email: \texttt{hmzhou@math.gatech.edu}.}
}
\date{}
\begin{document}
\maketitle

\begin{abstract}	
Solving general high-dimensional partial differential equations (PDE) is a long-standing challenge in numerical mathematics. In this paper, we propose a novel approach to solve high-dimensional linear and nonlinear PDEs defined on arbitrary domains by leveraging their weak formulations. We convert the problem of finding the weak solution of PDEs into an operator norm minimization problem induced from the weak formulation. The weak solution and the test function in the weak formulation are then parameterized as the primal and adversarial networks respectively, which are alternately updated to approximate the optimal network parameter setting. Our approach, termed as the weak adversarial network (WAN), is fast, stable, and completely mesh-free, which is particularly suitable for high-dimensional PDEs defined on irregular domains where the classical numerical methods based on finite differences and finite elements suffer the issues of slow computation, instability and the curse of dimensionality. We apply our method to a variety of test problems with high-dimensional PDEs to demonstrate its promising performance.
\bigskip

\noindent
\textbf{Keywords.} High Dimensional PDE, Deep Neural Network, Adversarial Network, Weak Solution
\smallskip

%\noindent
%\textbf{AMS subject classifications.} 90C25, 68U10, 65J22
\end{abstract}

\section{Introduction}
\label{sec:intro}
Solving general high-dimensional partial differential equations (PDEs) has been a long-standing challenge in numerical analysis and computation \cite{rudd2015constrained,lagaris1998artificial,raissi2019physics,raissi2017physics,
beck2017machine,fujii2017asymptotic,han2017overcoming,weinan2017deep,khoo2019solving,nabian2018deep,weinan2018deep,sirignano2018dgm}. In this paper, we
present a novel method that leverages the form of weak solutions and adversarial networks to compute solutions of PDEs, especially to tackle problems posed in
high dimensions.
To instantiate the derivation of the proposed method, we first consider the following second-order elliptic PDE with either Dirichlet's or Neumann's boundary conditions on \textit{arbitrary domain} $\Omega \subset \mathbb{R}^d$,
\begin{equation}
\label{eq:soe}
\begin{cases}
-\sum_{i=1}^d \partial_{i} (\sum_{j=1}^d a_{ij}\partial_{j}u) + \sum_{i=1}^d b_i \partial_{i} u + cu - f = 0, & \mbox{in} \ \Omega \\
u(x) - g(x) = 0\quad \text{(Dirichlet)}\quad \text{or}\quad (\partial u / \partial \vec{n})(x) - g(x) = 0\quad \text{(Neumann)}, & \mbox{on}\  \partial\Omega
\end{cases}
\end{equation}
where $a_{ij}, b_i, c: \Omega \to \mathbb{R}$ for $i,j \in [d]\triangleq \{1,\dots,d\}$, $f:\Omega \to \mathbb{R}$ and $g:\partial \Omega \to \mathbb{R}$ are all given, and $(\partial u / \partial \vec{n})(x) $ denotes the directional derivative of $u$ along the outer normal direction $\vec{n}$ at the boundary point $x\in \partial \Omega$.
In addition, we assume that the elliptic operator has a strong ellipticity, meaning there exists a constant $\theta>0$ such that $\xi^{\top}A(x)\xi \ge \theta|\xi|^2$ for any $\xi=(\xi_1,\dots,\xi_d)\in \Rbb^{d}$ with $|\xi|^2=\sum_{i=1}^d|\xi_i|^2$ and $x\in \Omega$ a.e., where $a_{ij}=a_{ji}$ for all $i,j\in[d]$ and $A(x) \triangleq [a_{ij}(x)]\in\mathbb{R}^{d\times d}$, i.e., $A(x)$ is symmetric positive definite with all eigenvalues no smaller than $\theta$ almost everywhere in $\Omega$.
%
%If we denote $\Omega\times[0,T]$ by $\Omega_{T}$, then
We also consider solving PDEs involving time, such as the linear second-order parabolic PDE (of finite time horizon):
\begin{equation}
\label{eq:parab}
\begin{cases}
u_t-\sum_{i=1}^d \partial_{i} (\sum_{j=1}^d a_{ij}\partial_{j}u) + \sum_{i=1}^d b_i \partial_{i} u + cu - f = 0, & \mbox{in} \ \Omega \times [0,T] \\
u(x,t) - g(x,t) = 0\quad \text{(Dirichlet)}\quad \text{or}\quad (\partial u / \partial \vec{n})(x,t) - g(x,t) = 0\quad \text{(Neumann)}, & \mbox{on}\  \partial\Omega\times[0,T] \\
u(x,0)-h(x)=0,& \mbox{on}\ \Omega
\end{cases}
\end{equation}
where $a_{ij}, b_i, c:\Omega\times[0,T]\to \mathbb{R}$ for $i,j\in [d]$ as before, $f:\Omega\times[0,T]\rightarrow \Rbb$ and $g: \partial \Omega\times[0,T]\rightarrow \Rbb$ and $h: \Omega\rightarrow \Rbb$ are given.
%Similarly, there exist a constant $\theta>0$ such that $\xi^{\top}A(x,t)\xi \ge \theta|\xi|^2$ for all $(x,t)\in\Omega_T$, $\xi\in\Rbb^d$, where $A(x,t)=[a_{ij}(x,t)]\in\Rbb^{d\times d}$ and $a_{ij}=a_{ji}, i,j\in[d]$.
%
%Then, we generalize the proposed method to PDEs involving time through a second-order parabolic PDE with initial boundary value conditions.
%
In either case, we will see that the method developed in this paper can be directly applied to \textit{general high-dimensional PDEs, including both linear and nonlinear ones}.

PDEs are prevalent and have extensive applications in science, engineering, economics, and finance \cite{Quarteroni:2008a,Thomas:2013a}.
The most popular standard approaches to calculate numerical solutions of PDEs include finite difference and finite element methods (FEM) \cite{Hughes:2012a}.
These methods discretize the time interval $[0,T]$ and the domain $\Omega$ using mesh grids or triangulations, create simple basis functions on the mesh, convert a continuous PDE %in \eqref{eq:general_PDE}
into its discrete counterpart, and finally solve the resulting system of basis coefficients to obtain numerical approximations of the true solution.
Although these methods have been significantly advanced in the past decades and are able to handle rather complicated and highly oscillating problems, they suffer the so-called ``curse of dimensionality'' since the number of mesh points increases exponentially fast with respect to the problem dimension $d$. Hence they quickly become computationally intractable for high dimensional problem in practice.
As a consequence, these numerical methods are rarely useful for general high-dimensional PDEs, e.g. $d\ge4$, especially when a sufficiently high-resolution solution is needed and/or the domain $\Omega$ is irregular. %for which these methods can be slow and unstable.

Facing the challenge, our goal is to provide a computational feasible alternative approach to solve general high-dimensional PDEs defined on arbitrarily shaped domains.
More specifically, using the weak formulation of PDEs, we parameterize the weak solution and the test function as the primal and adversarial neural networks respectively, and train them in an unsupervised form where only the evaluations of these networks (and their gradients) on some sampled collocation points in the interior and boundary of the domain are needed.
Our approach retains the continuum nature of PDEs for which partial derivatives can be carried out directly without any spatial discretization, and is fast and stable in solving general high-dimensional PDEs.
Moreover, our method is completely mesh-free and can be applied to PDEs defined on arbitrarily shaped domains, without suffering the issue of the curse of dimensionality.

In the remainder of this paper, we first provide an overview of related work on solving PDEs using machine learning approaches in Section \ref{sec:related}.
In Section \ref{sec:proposed},  we introduce the weak formulation of stationary PDEs, reformulate the PDE as a saddle-point problem based on the operator norm induced from the weak formulation, and present our proposed algorithm with necessary training details.
Then we extend the method to solve PDEs involving time.
In Section \ref{sec:experiment}, we provide a number of numerical results to show that our method can solve high-dimensional PDEs efficiently and accurately. Our examples also demonstrate some numerical understandings about selections of neural networks structures including the numbers of layers and nodes in the computations.
Section \ref{sec:summary} concludes this paper.
%%%%%%%%%%%%%%%%%%%%%%%%%%%%%%%%%%%%%%%%%%%%%%%%%%%%%%%%%%%%%%%%%%%%%%%%%%%%%%%%%%
\section{Related Work}
\label{sec:related}
Deep learning techniques have been used to solve PDEs in the past few years. %Many methods use innovative ideas achieving remarkable success that can't be done by the classical methods.
As an emergent research direction, great potentials %of using deep learning techniques in solving PDEs
have been demonstrated by many promising results, even though many fundamental questions remain to be answered.  Based on the strategies,
these works can be roughly classified into two categories.

In the first category, deep neural networks (DNN) are employed to assist the classical numerical methods.
In \cite{lee1990neural}, parallel neural networks are used to improve the efficiency of the finite difference method.
In \cite{wang1990structured}, neural network is used to accelerate the numerical methods for matrix algebra problems.
Neural network is also applied to improve the accuracy of finite difference method in \cite{gobovic1994analog}, which can be extended to solve two-dimensional PDEs \cite{yentis1996vlsi}.
In \cite{meade1994numerical,meade1994solution}, the solution of ordinary differential equations (ODE) is approximated by the combination of splines, where the combination parameters are determined by training a neural network with piecewise linear activation functions.
A constrained integration method called GINT is proposed to solving initial boundary value PDEs in \cite{rudd2015constrained}, where neural networks are combined with the classical Galerkin method.
In \cite{tompson2017accelerating}, convolutional neural networks (CNN) is used to solve the large linear system derived from the discretization of incompressible Euler equations. %
In \cite{suzuki2017neural}, a neural network-based discretization scheme is developed for the nonlinear differential equation using regression analysis technique.
Despite of the improvement over classical numerical methods, these methods still suffer the exponentially increasing problem size and are not tractable for high-dimensional PDEs.

In the second category, the deep neural networks are employed to directly approximate the solution of PDE, which may be more advantageous in dealing with high dimensional problems.
In \cite{lagaris1998artificial}, the solution of the PDE is decomposed into two parts, where the first part is explicitly defined to satisfy the initial boundary conditions and the other part is a product of a mapping parameterized as a neural network and an explicitly defined function that vanishes on the boundary.
Then the neural network is trained by minimizing the squared residuals over specified collocation points.
 An improvement of this method by parameterizing both parts using neural networks in \cite{berg2018unified}.
The singular canonical correlation analysis (SVCCA) is introduced to further improve this method in \cite{magill2018neural}.
In contrast to decomposing the solution into two parts, the idea of approximating the solution of PDEs by a single neural network is considered in \cite{dissanayake1994neural}, which is not capable of dealing with high-dimensional problems.
In \cite{raissi2019physics,raissi2017physics}, physics-informed neural networks (PINN) are proposed to approximate the solution of PDEs by incorporating observed data points and initial boundary conditions into the loss function for training.
A similar model is present in \cite{sirignano2018dgm} for high dimensional free boundary parabolic PDEs.
A different approach that represents a class of nonlinear PDEs by forward-backward stochastic differential equations is proposed and studied In \cite{beck2017machine,fujii2017asymptotic,han2017overcoming,weinan2017deep}.
Another appealing approach that exploits the variational form of PDEs is considered in \cite{khoo2019solving,nabian2018deep,weinan2018deep}.
In \cite{khoo2019solving}, a committer function is parameterized by a neural network whose weights are obtained by optimizing the variational formulation of the corresponding PDE.
In \cite{nabian2018deep}, deep learning technique is employed to solve low-dimensional random PDEs based on both strong form and variational form.
In \cite{weinan2018deep}, a deep Ritz method (DRM) is proposed to solve the class of PDEs that can be reformulated as equivalent energy minimization problems. The constraint due to boundary condition is added to the energy as a penalty term in \cite{weinan2018deep}.
More recently, an adaptive collocation strategy is presented for a method in \cite{anitescu2019artificial}.
In \cite{yang2019adversarial}, an adversarial inference procedure is used for quantifying and propagating uncertainty in systems governed by non-linear differential equations, where the discriminator distinguishes the real observation and the approximation provided by the generative network through the given physical laws expressed by PDEs and the generator tries to fool the discriminator.
To the best of our knowledge, none of the existing methods models the solution and test function in the weak solution form of the PDE as primal and adversarial networks as proposed in the present work.
We will show in the experiment section that the use of weak form  is more advantageous especially when the PDEs have singularities where classical solutions do not exist.
%%%%%%%%%%%%%%%%%%%%%%%%%%%%%%%%%%%%%%%%%%%%%%%%%%%%%%%%%%%%%%%%%%%%%%%%%%%%%%%%%%
\section{Proposed Method}
\label{sec:proposed}
To demonstrate the main idea, we first focus on the boundary value problems (BVP) \eqref{eq:soe}.
We consider the weak formulation of the PDE, and pose the weak solution as an operator norm minimization.
The weak solution and the test function are both parameterized as deep neural networks, where the parameters are learned by an adversarial training governed by the weak formulation.
Important implementation details are also provided.
Finally, we extend the proposed method to the IBVP \eqref{eq:parab}, where the PDEs are time-dependent.

%--------------------------------------------------------------------------------
\subsection{PDE and weak formulation}
In general, a solution $u\in C^2(\Omega)$ of a BVP \eqref{eq:soe} requires sufficient regularity of the problem and may not exist in the classical sense.
Instead, we consider the \textit{weak formulation} of \eqref{eq:soe} by multiplying both sides by a test function $\varphi \in H_0^1(\Omega;\mathbb{R})$ and integrating by parts:
\begin{equation}
\label{eq:weak_soe}
\begin{cases}
\langle \Au, \varphi \rangle \triangleq \int_\Omega \del[1]{\sum_{j=1}^d \sum_{i=1}^d a_{ij}\partial_j u\partial_{i}\varphi + \sum_{i=1}^d b_i \varphi\partial_{i}u + cu \varphi - f\varphi } \dif x= 0 & \\
\Bu = 0, \quad \mbox{on}\  \partial\Omega &
\end{cases}
\end{equation}
where $H_0^1(\Omega;\mathbb{R})$ denotes the Sobolev space,  a Hilbert space of functions who themselves and their weak partial derivatives are $L^2$ integrable on $\Omega$ with vanishing trace on the boundary $\partial \Omega$.
Note that the boundary terms of \eqref{eq:weak_soe} after integration by parts disappears due to $\varphi=0$ on $\partial \Omega$.
If $u \in H^1(\Omega;\mathbb{R})$ with possibly nonzero trace satisfies \eqref{eq:weak_soe} for all $\varphi \in H_0^1$, we say that $u$ is a \textit{weak solution} (or \textit{general solution}) of \eqref{eq:soe}.
%
%If certain conditions, such as elliptic regularity and those in the Sobolev's embedding theorem \cite{gilbarg2015elliptic}, hold, then the classical solution to \eqref{eq:soe} exists and coincides with the weak solution.
%
In general, the weak solution to \eqref{eq:soe} may exist while a classical one may not.
In this paper, we therefore seek for the weak solution characterized in \eqref{eq:weak_soe} so that we can provide an answer to a BVP \eqref{eq:soe} to the best extent even if it does not admit a solution in the classical sense.
%--------------------------------------------------------------------------------
\subsection{Induced operator norm minimization}
\label{sec:minmax}
A novel point of view for the weak solution $u$ can be interpreted as follows. %The weak formulation \eqref{eq:weak_soe} inspires a novel point of view of a weak solution $u$.
We can consider $\Au: H_0^1(\Omega) \to \mathbb{R}$ as a linear functional (operator) such that $\Au(\varphi)\triangleq \langle \Au, \varphi \rangle$ as defined in \eqref{eq:weak_soe}.
Then the operator norm of $\Au$ induced from $L^2$ norm is defined by
\begin{equation}
\label{eq:norm_op}
\|\Au\|_{op} \triangleq \max\{\langle \Au,\varphi \rangle / \|\varphi\|_2\ \vert\ \varphi \in H_0^1, \varphi\neq 0\},
\end{equation}
where $\|\varphi\|_2 = (\int_\Omega |\varphi(x)|^2\dif x)^{1/2}$.
Therefore, $u$ is a weak solution of \eqref{eq:soe} if and only if $\|\Au\|_{op} = 0$ and the boundary condition $\Bu=0$ is satisfied on $\partial \Omega$.
As $\|\Au \|_{op}\ge 0$, we know that a weak solution $u$ to \eqref{eq:soe} thus solves the following two equivalent problems in observation of \eqref{eq:norm_op}:
\begin{equation}
\label{eq:min_op}
\min_{u \in H^1} \|\Au\|_{op}^2 \quad \Longleftrightarrow \quad \min_{u \in H^1} \max_{\varphi \in H_0^1} |\langle\Au,\varphi \rangle|^2 / \|\varphi\|_2^2.
\end{equation}
This result is summarized in the following theorem.
\begin{theorem}
\label{thm:min_op}
Suppose $u^{*}$ satisfies the boundary condition $\mathcal{B}[u^*]=0$, then $u^*$ is a weak solution of the BVP \eqref{eq:soe} if and only if $u^*$ solves the problems in \eqref{eq:min_op} and $\|\mathcal{A}[u^*]\|_{op}=0$.
\end{theorem}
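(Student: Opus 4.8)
The statement is essentially a reformulation of the definitions \eqref{eq:weak_soe} and \eqref{eq:norm_op}, and the plan is to make that precise in three short steps. First I would record two elementary facts about the induced operator norm. (i) For any $u\in H^1(\Omega)$ the map $\varphi\mapsto\langle\mathcal{A}[u],\varphi\rangle$ is linear on $H_0^1(\Omega)$, so replacing $\varphi$ by $-\varphi$ negates the numerator in \eqref{eq:norm_op} while leaving $\|\varphi\|_2$ unchanged; hence the defining expression in \eqref{eq:norm_op} equals $\sup\{|\langle\mathcal{A}[u],\varphi\rangle|/\|\varphi\|_2 : \varphi\in H_0^1,\ \varphi\neq 0\}\ge 0$, and squaring (using that $t\mapsto t^2$ is increasing on $[0,\infty)$) yields $\|\mathcal{A}[u]\|_{op}^2=\max_{\varphi\in H_0^1,\,\varphi\neq 0}|\langle\mathcal{A}[u],\varphi\rangle|^2/\|\varphi\|_2^2$ for each fixed $u$, which is exactly the equivalence of the two minimization problems displayed in \eqref{eq:min_op}. (ii) Consequently $\|\mathcal{A}[u]\|_{op}\ge 0$ for every $u$, and $\|\mathcal{A}[u]\|_{op}=0$ if and only if $\langle\mathcal{A}[u],\varphi\rangle=0$ for all $\varphi\in H_0^1(\Omega)$ (only if: then $|\langle\mathcal{A}[u],\varphi\rangle|\le\|\mathcal{A}[u]\|_{op}\|\varphi\|_2=0$ for every $\varphi$; if: trivial).

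\textbf{The two directions.} Given (i)--(ii), both implications are short. For necessity, I would start from $u^*$ being a weak solution, so by definition $\langle\mathcal{A}[u^*],\varphi\rangle=0$ for all $\varphi\in H_0^1(\Omega)$ and $\mathcal{B}[u^*]=0$ on $\partial\Omega$; then (ii) gives $\|\mathcal{A}[u^*]\|_{op}=0$, and since $\|\mathcal{A}[u]\|_{op}^2\ge 0$ for all $u$ by (i), the value $0$ attained at $u^*$ is the global minimum, so $u^*$ solves \eqref{eq:min_op}. For sufficiency, I would assume $\mathcal{B}[u^*]=0$ on $\partial\Omega$ together with $\|\mathcal{A}[u^*]\|_{op}=0$; by (ii) the latter forces $\langle\mathcal{A}[u^*],\varphi\rangle=0$ for every $\varphi\in H_0^1(\Omega)$, and this together with $\mathcal{B}[u^*]=0$ is precisely \eqref{eq:weak_soe}, i.e. the definition of $u^*$ being a weak solution of \eqref{eq:soe}. (The hypothesis that $u^*$ also solves \eqref{eq:min_op} is not needed for this direction, but is harmless.)

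\textbf{Where the care goes.} There is no deep difficulty here, but two points deserve attention. The first is the sign normalization in step (i): \eqref{eq:norm_op} is written with a signed ratio, and it genuinely matters that linearity in $\varphi$ lets us pass to the absolute value, otherwise the squared min--max form in \eqref{eq:min_op} would not obviously coincide with $\|\mathcal{A}[u]\|_{op}^2$. The second is the reason the theorem lists both ``$u^*$ solves \eqref{eq:min_op}'' and ``$\|\mathcal{A}[u^*]\|_{op}=0$'' as hypotheses in the ``if'' part: stating both lets the result avoid invoking any existence theorem for the weak solution (such as Lax--Milgram under a coercivity assumption), since without knowing a priori that the infimum in \eqref{eq:min_op} is $0$, being a minimizer alone would not guarantee the residual vanishes. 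I would also note in passing that $\|\mathcal{A}[u]\|_{op}$ may equal $+\infty$ for a generic $u\in H^1(\Omega)$, because the numerator $\langle\mathcal{A}[u],\varphi\rangle$ involves $\nabla\varphi$ and is not controlled by $\|\varphi\|_2$ alone; this is irrelevant to the theorem, which only concerns the zero level set of the operator norm.
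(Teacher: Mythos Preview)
Your proposal is correct and in fact cleaner than the paper's own argument, and the route is genuinely different in the ``if'' direction.

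The paper begins with a well-definedness paragraph, arguing that the maximum over the $L^2$-unit sphere $Y=\{\varphi\in H_0^1:\|\varphi\|_2=1\}$ is attained (because $\langle\mathcal{A}[u],\cdot\rangle$ is continuous and $Y$ is closed) and that the minimum over $X=\{u\in H^1:\mathcal{B}[u]=0\}$ is attained. For the forward direction the paper does exactly what you do. For the backward direction, however, the paper does \emph{not} use the hypothesis $\|\mathcal{A}[u^*]\|_{op}=0$ directly; instead it \emph{assumes} a weak solution $\hat u$ exists, supposes for contradiction that the minimizer $u^*$ is not a weak solution so that $h(u^*)>0$, and then uses $h(\hat u)=0$ to contradict minimality.

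Your direct argument---from $\|\mathcal{A}[u^*]\|_{op}=0$ straight to $\langle\mathcal{A}[u^*],\varphi\rangle=0$ for all $\varphi$---is shorter, does not need the existence of an auxiliary weak solution, and sidesteps the paper's attainment claims (which, as you rightly observe, are delicate since the ratio in \eqref{eq:norm_op} is not bounded by $\|\varphi\|_2$ alone). Your remark that the minimizer hypothesis is redundant once $\|\mathcal{A}[u^*]\|_{op}=0$ is assumed, and your explanation of why the theorem lists both conditions (to avoid invoking Lax--Milgram-type existence), are sharper than the paper's treatment. The trade-off is that the paper's contradiction argument, if one accepts its well-posedness preamble, proves the marginally stronger statement ``any minimizer is a weak solution provided a weak solution exists,'' whereas your argument proves exactly what the theorem states.
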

\begin{proof}
For any fixed $u\in H^1(\Omega)$, we can see that the maximum of $\langle \Au, \varphi \rangle$ is achievable over $Y\triangleq \{\varphi\in H^{1}_{0}(\Omega)\ \vert\ \|\varphi\|_{2}=1\}$ since $\langle \Au, \cdot \rangle$ is continuous and $Y$ is closed in $H^1_0(\Omega)$.
Denote $h(u)$ as the maximum of $\langle \Au, \varphi \rangle$ over $Y$, then $h(u) = \|\Au\|_{op}$ in \eqref{eq:norm_op}.
On the other hand, the space of functions $u\in H^{1}(\Omega)$ satisfying the boundary condition $\Bu=0$, denoted by $X$, is also closed in $H^{1}(\Omega)$.
Therefore, the minimum of $h(u)$ over $X$ is also achievable.
Hence the minimax problem \eqref{eq:min_op} is well-defined.

Now we show that $u^*$ is the solution of the minimax problem \eqref{eq:min_op} if and only if it is the weak solution of the problem \eqref{eq:soe}.
Suppose $u^{*}$, satisfying the boundary condition $\mathcal{B}[u^*]=0$, is the weak solution of the problem \eqref{eq:soe}, namely $u^{*}$ satisfies \eqref{eq:weak_soe} for all $\varphi \in Y$, then $\langle \mathcal{A}[u^*], \varphi \rangle \equiv 0$ for all $\varphi \in Y$.
Therefore, $\|\mathcal{A}[u^{*}]\|_{op}=0$, and $u^{*}$ is the solution of the minimax problem \eqref{eq:min_op}.
On the other hand, suppose a weak solution $\hat{u}$ of \eqref{eq:soe} exists.
Assume that $u^*$ is the minimizer of the problem \eqref{eq:min_op}, i.e., $u^*=\argmin_{u\in X}h(u)$, but not a weak solution of the problem \eqref{eq:soe}, then there exists $\varphi^* \in Y$ such that $\langle \mathcal{A}u^*, \varphi^*\rangle > 0$.
Therefore $h(u^*)=\max_{\varphi \in Y}|\langle \mathcal{A}[u^*],\varphi \rangle|>0$.
However, as we showed above, $h(\hat{u})=0$ since $\hat{u}$ is a weak solution of \eqref{eq:soe}, which contradicts to the assumption that $u^*$ is the minimizer of $h(u)$ over $X$.
Hence $u^*$ must also be a weak solution of \eqref{eq:soe}.%, i.e., $u^*$ satisfies \eqref{eq:weak_soe} for all $\varphi \in Y$.
\end{proof}
Theorem \ref{thm:min_op} implies that, to find the weak solution of \eqref{eq:soe}, we can instead seek for the optimal solution $u$ that minimizes \eqref{eq:min_op}.
%--------------------------------------------------------------------------------
\subsection{Weak adversarial network for solving PDE}
\label{sec:PDE-GAN}
The formulation \eqref{eq:min_op} inspires an adversarial approach to find the weak solution of \eqref{eq:soe}.
More specifically, we seek for the function $\ut: \mathbb{R}^d \to \mathbb{R}$, realized as a deep neural network with parameter $\theta$ to be learned, such that $\mathcal{A}[u_\theta]$ minimizes the operator norm \eqref{eq:min_op}.
On the other hand, the test function $\varphi$, is a deep adversarial network with parameter $\eta$, also to be learned, challenges $\ut$ by maximizing $\langle \Aut, \phieta \rangle$ modulus its own norm $\|\phieta\|_2$ for every given $\ut$ in \eqref{eq:min_op}.

To train the deep neural network $\ut$ and the adversarial network $\phieta$ such that they solve \eqref{eq:min_op}, we first need to formulate the objective functions of $\ut$ and $\phieta$.
%
%Since logarithm function is monotone and strictly increasing, we can for convenience reformulate \eqref{eq:min_op} and obtain the objective of $\ut$ and $\phieta$ in the \underline{int}erior of $\Omega$ as follows,
Since logarithm function is monotone and strictly increasing, we can for convenience reformulate \eqref{eq:min_op} and obtain the objective of $\ut$ and $\phieta$ in the \underline{int}erior of $\Omega$ as follows,
\begin{equation}\label{eq:Lint}
\Lint(\theta,\eta) \triangleq \log |\langle \Aut, \phieta \rangle |^2- \log \|\phieta\|_2^2.
\end{equation}
In addition, the weak solution $\ut$ also need to satisfy the boundary condition $\Bu=0$ on $\partial \Omega$ as in \eqref{eq:soe}.
Let $\{x_b^{(j)}\}^{N_b}_{j=1}$ be a set of $N_b$ collocation points on the \underline{b}oun\underline{d}a\underline{ry} $\partial\Omega$, then the squared error of $\ut$ for Dirichlet boundary condition $u=g$ on $\partial \Omega$ is given by
\begin{equation}
\label{eq:Lbdry}
\Lbdry(\theta) \triangleq (1/N_b) \cdot \textstyle\sum_{j=1}^{N_b} |\ut(x_b^{(j)}) - g(x_b^{(j)})|^2.
\end{equation}
If the Neumann boundary condition in \eqref{eq:soe} is imposed in the BVP \eqref{eq:soe}, then one can form the loss function $\Lbdry(\theta) = (1/N_b) \cdot \sum_{j=1}^{N_b} |\sum_{i=1}^d n_i(x_b^{(j)})\,\partial_i \ut(x_b^{(j)}) - g(x_b^{(j)})|^2$ instead, where $\vec{n}(x)=(n_1(x),\dots,n_d(x))$ is the outer normal direction at $x\in\partial\Omega$.
The total objective function is the weighted sum of the two objectives \eqref{eq:Lint} and \eqref{eq:Lbdry}, for which we seek for a saddle point that solves the minimax problem:
\begin{equation}
\label{eq:Ltotal}
\min_{\theta} \max_{\eta} L(\theta, \eta),\quad \mbox{where}\quad L(\theta, \eta) \triangleq\Lint(\theta,\eta) + \alpha \Lbdry(\theta),
\end{equation}
where $\alpha>0$ is user-chosen balancing parameter. In theory, the weak solution attains zero for both $\Lint$ and $\Lbdry$, so any choice of $\alpha$ would work. However,
different $\alpha$ values impact the performance of the training and we will give examples in Section \ref{sec:experiment}.
%
%--------------------------------------------------------------------------------
\subsection{Training algorithm for the weak adversarial network}
\label{subsec:training}
Given the objective function \eqref{eq:Ltotal}, the key ingredients in the network training are the gradients of $L(\theta,\eta)$ with respect to the network parameters $\theta$ and $\eta$.
Then $\theta$ and $\eta$ can be optimized by alternating gradient descent and ascent of $L(\theta,\eta)$ in \eqref{eq:Ltotal} respectively.

To obtain the gradients of $\Lint$ in \eqref{eq:Ltotal}, we first denote the integrand of $\langle \mathcal{A}[\ut],\phieta \rangle$ in \eqref{eq:weak_soe} as $I(x;\theta,\eta)$ for every given $\theta$ and $\eta$.
For instance, for the second-order elliptic PDE \eqref{eq:soe}, $I(x;\theta,\eta)$, $\nabla_\theta I(x;\theta,\eta)$, $\nabla_{\eta} I(x;\theta,\eta)$ are given below in light of the weak formulation \eqref{eq:weak_soe}:
\begin{equation}
\label{eq:int}
\begin{split}
& I(x;\theta,\eta) = \textstyle\sum_{j=1}^d \textstyle\sum_{i=1}^d a_{ij}(x)\partial_j \ut(x)\partial_{i}\phieta(x) + \sum_{i=1}^d b_i(x) \phieta(x) \partial_{i}\ut(x)\\ & \quad \qquad \qquad + c(x)\ut(x) \phieta(x) - f(x)\phieta(x) \\
& \nabla_\theta I(x;\theta,\eta) = \textstyle\sum_{j=1}^d \sum_{i=1}^d a_{ij}(x)\partial_j \nabla_\theta\ut(x)\partial_{i}\phieta(x) + \sum_{i=1}^d b_i(x) \phieta(x) \partial_{i}\nabla_\theta\ut(x) \\ & \qquad \qquad \qquad + c(x)\nabla_\theta\ut(x) \phieta(x) - f(x)\phieta(x) \\
& \nabla_\eta I(x;\theta,\eta) = \textstyle\sum_{j=1}^d \sum_{i=1}^d a_{ij}(x)\partial_j \ut(x)\partial_{i} \nabla_\eta \phieta(x) + \sum_{i=1}^d b_i(x) \nabla_\eta \phieta(x) \partial_{i}\ut(x) \\ & \qquad \qquad \qquad + c(x)\ut(x) \nabla_\eta \phieta(x) - f(x)\nabla_\eta \phieta(x) \\
\end{split}
\end{equation}
where $\nabla_\theta \ut$ and $\nabla_\eta \phieta$ are the standard gradients of the networks $\ut$ and $\phieta$ with respect to their network parameters $\theta$ and $\eta$.
Furthermore, the algorithm and numerical experiments conducted in this paper are implemented in the TensorFlow \cite{abadi2016tensorflow} framework. In this situation,  we take advantage of TensorFlow to calculate those derivatives automatically within the framework. To be more specific, due to the definition of $\Lint$ in \eqref{eq:Lint} and the integrands in \eqref{eq:int}, we can obtain that $\nabla_\theta \Lint(\theta,\eta) = 2 (\textstyle\int_\Omega I(x;\theta,\eta) \dif x)^{-1} (\int_\Omega \nabla_\theta I(x;\theta,\eta) \dif x)$.
Then we randomly sample $N_r$ collocation points $\{x_r^{(j)} \in \Omega \ \vert \ j \in [N_r]\}$ uniformly in the interior of the \underline{r}egion $\Omega$, and approximate the gradient $\nabla_\theta \Lint(\theta,\eta) \approx 2\cdot (\sum_{j=1}^{N_r} I(x_r^{(j)};\theta,\eta))^{-1} (\sum_{j=1}^{N_r}\nabla_\theta I(x_r^{(j)};\theta,\eta))$.
The gradients $\nabla_\eta \Lint$, $\nabla_\theta \Lbdry$ can be approximated similarly, and hence we omit the details here.
With the gradients of $\nabla_\theta L$ and $\nabla_\eta L$, we can apply alternating updates to optimize the parameters $\theta$ and $\eta$.
The resulting algorithm, termed as the weak adversarial network (WAN), is summarized in Algorithm \ref{alg:wan}.
%%%%%%%%%%%%%%%%%%%%%%%%%%%%%%%%%%%%%%%%%%%%%%%%%%%%%%%%%%%%%%%%%%%%%%%%%%%%%%%%%
%--------------------------------------------------------------------------------
\subsection{Efficiency and stability improvements of WAN}
\label{subsec:improve}
During our experiments, we observed that several small modifications can further improve the efficiency and/or stability of Algorithm \ref{alg:wan} in practice.
One of these modifications is that, to enforce $\phieta=0$ on $\Omega$, we can factorize $\phieta=w\cdot\veta$, where $w$ vanishes on $\partial \Omega$ and $\veta$ is allowed to take any value on $\partial \Omega$.
To obtain $w$ for the domain $\Omega$ in a given BVP \eqref{eq:soe}, we can set it to the signed distance function of $\Omega$, i.e., $w(x)=\text{dist}(x,\partial \Omega)\triangleq \inf\{|x-y|: y \in \partial \Omega\}$ if $x \in \Omega$ and $-\text{dist}(x,\partial \Omega)$ if $x\notin \Omega$.
This signed distance function can be obtained by the fast marching method.
Alternatively, one can pre-train $w$ as a neural network such that $w(x)>0$ for $x\in\Omega$ and $w(x)=0$ for $x\in\partial\Omega$.
To this end, one can parameterize $w_\xi: \Omega \to \mathbb{R}$ as a neural network and optimize its parameter $\xi$ by minimizing the loss function $\sum_{j=1}^{N_b}|w_\xi(x_b^{(j)})| - \varepsilon \sum_{j=1}^{N_r}\log w_\xi(x_r^{(j)})$.
In either way, we pre-compute such $w$ and fix it throughout Algorithm \ref{alg:wan} WAN, then the updates of parameters are performed for $\ut$ and $\veta$ only.
In this case, $\phieta=w\cdot\veta$ always vanishes on $\partial \Omega$ so we do not need to worry about the boundary constraint of $\phieta$ during the training.

In addition, our experiments show that in the training process for the weak solution neural network $u_{\theta}$, applying gradient descent directly to $|\langle \mathcal{A}[u_\theta],\varphi_\eta \rangle|^2/\|\varphi\|_2^2$ instead of the logarithm term appears to improve efficiency.  Finally, formulating the loss function for the boundary condition with absolute error rather than squared error appears empirically to be more efficient in some cases.
%In the training process for the neural network $u_{\theta}$, we apply gradient descent directly to $|\langle \mathcal{A}[u_\theta],\varphi_\eta \rangle|^2$ without the logarithm which appears to improve efficiency.
%
%In addition, we find that the training process will be more stable by adding the following term to the loss function \eqref{eq:Ltotal}:
%\begin{equation}
%\label{eq:Lw}
% L_w(\theta)\triangleq |\langle \Aut, w \rangle |^2.
%\end{equation}
%
%Therefore, the objective function for training $u_{\theta}$ becomes
%
%\begin{equation}
%\label{eq:loss_for_u}
%L(\theta, \eta) \triangleq \alpha \Lint(\theta,\eta)+\beta L_{w}(\theta) + \Lbdry(\theta),
%\end{equation}
%where $\alpha,\beta>0$ are user-chosen balancing parameters.
%
The computer code that implements these modifications for all test problems in Section \ref{sec:experiment} will be released upon request.
\begin{algorithm}[t!]
\caption{Weak Adversarial Network (WAN) for Solving High-dimensional static PDEs. }
\label{alg:wan}
\begin{algorithmic}
\STATE \textbf{Input:} {$N_r/N_b$: number of region/boundary collocation points; $K_u/K_{\varphi}$: number of solution/adversarial network parameter updates per iteration.}
\STATE \textbf{Initialize:} Network architectures $\ut,\phieta:\Omega \to \mathbb{R}$ and parameters $\theta,\eta$.

\WHILE{not converged}
\STATE {Sample collocation points $\{x^{(j)}_r \in \Omega : j\in[N_r]\}$ and $\{x^{(j)}_b \in \partial \Omega : j\in[N_b]\}$}
\STATE \texttt{\# update weak solution network parameter}
\FOR{$k=1,\dots,K_u$}
\STATE {Update $\theta \leftarrow \theta - \tau_\theta \nabla_\theta L$ where $\nabla_\theta L$ is approximated using $\{x^{(j)}_r\}$ and $\{x^{(j)}_b\}$.}
\ENDFOR
\STATE \texttt{\# update test function network parameter}
\FOR{$k=1,\dots,K_{\varphi}$ }
\STATE {Update $\eta \leftarrow \eta + \tau_\eta \nabla_\eta L$ where $\nabla_\eta L$ is approximated using $\{x^{(j)}_r\}$.}
\ENDFOR
\ENDWHILE
\STATE \textbf{Output:} {Weak solution $\ut(\cdot)$ of \eqref{eq:soe}.}
\end{algorithmic}
\end{algorithm}

\subsection{Weak adversarial network for PDEs involving time}
\label{subsec:parabolic}
In this subsection, we consider extending the proposed weak adversarial network method to solve IBVPs with time-dependent PDEs.
We provide two approaches for such case: one is to employ semi-discretization in time and iteratively solve $u(x,t_n)$ from a time-independent PDE for each $t_n$, where Algorithm \ref{alg:wan} directly serves as a subroutine; the other one is to treat $x$ and $t$ jointly and consider the weak solution and test functions in the whole region $\Omega\times[0,T]$ without any discretization.

\subsubsection{Semi-discretization in time}
The weak adversarial network approach can be easily applied to time-dependent PDEs, such as the parabolic equation \eqref{eq:parab}, by discretizing the time and solving an elliptical-type static PDE for each time point.
To this end, we partition $[0,T]$ into $N$ uniform segments using time points $0=t_0<t_1<\dots<t_N=T$, and apply the Crank-Nicolson scheme \cite{crank1996practical} in classical finite difference method for \eqref{eq:parab} at each time $t_n$ for $n=0,\dots,N-1$ to obtain
\begin{equation}
\label{eq:slice-parab}
u(x,t_{n+1})-u(x,t_n)=\frac{h}{2} \del[2]{\mathcal{L}(x,t_{n+1}; u(x,t_{n+1}))+f(x,t_{n+1})+\mathcal{L}(x,t_{n}; u(x,t_{n}))+f(x,t_n)}
\end{equation}
where $h=T/N$ is the time step size in discretization, $t_n=nh$, and
\begin{equation}
\mathcal{L}(x,t;u)\triangleq \textstyle\sum_{i=1}^d \partial_{i} \del[2]{\textstyle\sum_{j=1}^d a_{ij}(x,t)\partial_{j}u(x,t)} - \textstyle\sum_{i=1}^d b_i(x,t) \partial_{i} u(x,t) - c(x,t)u(x,t).
\end{equation}
More precisely, we start with $u(x,t_0)=u(x,0)=h(x)$, and solve for $u(x,t_1)$ from \eqref{eq:slice-parab} for $n=1$. % with boundary condition $u(x,t_1)=g(x,t_1)$ on $\partial \Omega$.
Since \eqref{eq:slice-parab} is an elliptical-type PDE in $u(x,t_1)$ with boundary value $u(x,t_1)=g(x,t_1)$ on $\partial \Omega$, we can apply Algorithm \ref{alg:wan} directly and obtain $u(x,t_1)$ as the parameterized neural network $u_{\theta_1}(x)$ with parameter $\theta_1$ output by Algorithm \ref{alg:wan}.
Following this procedure, we can solve \eqref{eq:slice-parab} for $u(x,t_n) = u_{\theta_n}(x)$ for $n=2,3,\dots,N$ in order.
This process is summarized in Algorithm \ref{alg:wan_parab_slice}.
Other types of time discretization can be employed and the IBVP can be solved with similar idea.
%idea presented above, and hence
%We omit the discussions on this.
%
\begin{algorithm}[t!]
\caption{Solving parabolic PDE \eqref{eq:parab} with semi-discretization in time and Algorithm \ref{alg:wan} as subroutine}
\label{alg:wan_parab_slice}
\begin{algorithmic}
\STATE \textbf{Input:} {$N_r,N_b,K_u,K_{\varphi}$ as in Algorithm \ref{alg:wan}. $N$: number of time points; $h=T/N$.}
\STATE \textbf{Initialize:} Network architectures $\ut,\phieta:\Omega \to \mathbb{R}$ and parameters $\theta,\eta$ for each $t_n$. Set $u(x,t_0)=u(x,0)=h(x)$.
\FOR{$n=0,\cdots,N-1$}
\STATE {Solve for $u(x,t_{n+1})=u_{\theta_{n+1}}(x)$ from the elliptical equation \eqref{eq:slice-parab} using Algorithm \ref{alg:wan}}
%\STATE {Freeze the parameter of neural network $u_{\theta}$ and denote this frozen neural network as $u_{\hat{\theta}}$}
\ENDFOR
\STATE \textbf{Output:} {Weak solution $\ut(\cdot,t_n)$ of \eqref{eq:parab} for $n=1,\dots,N$.}
\end{algorithmic}
\end{algorithm}

\subsubsection{Solving PDE with space and time variables jointly}
The proposed weak adversarial network approach can also be generalized to solve the IBVP \eqref{eq:parab} with space and time variables jointly.
In this case, the \textit{weak formulation} of \eqref{eq:parab} can be obtained by multiplying both sides of \eqref{eq:parab} by a test function $\varphi(\cdot, t)\in H^1_0(\Omega)$ a.e. in $[0,T]$ and integrating by parts:
\begin{equation}
\label{eq:weak_parab_xt}
\begin{aligned}
 0 = \langle \Au, \varphi \rangle &\textstyle\triangleq\int_\Omega \del[1]{u(x,T)\varphi(x,T)-h(x)\varphi(x,0)}\dif x-\textstyle\int_{0}^T \textstyle\int_{\Omega}u\partial_t\varphi\dif x\dif t \\
 &\quad+ \textstyle\int_{0}^T \textstyle\int_\Omega \del[1]{\textstyle\sum_{j=1}^d \textstyle\sum_{i=1}^d a_{ij}\partial_j u\partial_{i}\varphi + \textstyle\sum_{i=1}^d b_i \varphi \partial_{i}u  + cu \varphi - f\varphi } \dif x\dif t
\end{aligned}
\end{equation}
Following the same idea presented in Section \ref{sec:minmax}--\ref{sec:PDE-GAN}, we parameterize the weak solution $u$ and test function $\varphi$ as deep neural networks $u_\theta,\varphi_\eta: \Omega \times [0,T] \to \mathbb{R}$ with parameters $\theta$ and $\eta$ respectively.
Then we form the objective function in the saddle-point problem of $\theta$ and $\eta$ as
\begin{equation}
\label{eq:loss_for_u_parab}
L(\theta, \eta) \triangleq \Lint(\theta,\eta) +\gamma \Linit(\theta)+ \alpha \Lbdry(\theta),
\end{equation}
where $\alpha,\gamma>0$ are user-chosen balancing parameters.
In \eqref{eq:loss_for_u_parab}, the loss function $\Lint$ of the interior of $\Omega\times [0,T]$ has the same form as \eqref{eq:Lint} but with $\langle \mathcal{A}[u_\theta],\varphi_\eta \rangle$ defined in \eqref{eq:weak_parab_xt} and $\|\varphi_\eta\|_2^2 \triangleq \int_{0}^T \int_\Omega |\varphi(x,t)|^2\dif x \dif t$; $\Linit$ of the initial value condition in $\Omega$ and $\Lbdry$ of the boundary value condition on $\partial\Omega \times [0,T]$ are given by
\begin{align}
\Linit(\theta) &\triangleq (1/N_{a}) \cdot \textstyle\sum_{j=1}^{N_{a}} |\ut(x_a^{(j)},0) - h(x_a^{(j)})|^2
\label{eq:L_init_parab} \\
\Lbdry(\theta) &\triangleq (1/N_{b}) \cdot \textstyle\sum_{j=1}^{N_{b}} |\ut(x_b^{(j)},t_b^{(j)}) - g(x_b^{(j)},t_b^{(j)})|^2
\label{eq:L_bdry_parab}
\end{align}
where $\{x_a^{(j)}:j\in[N_a]\} \subset \Omega$ are $N_a$ collocation points for the initial condition and $\{(x_b^{(j)},t_b^{(j)}):j\in[N_b]\} \subset \partial \Omega \times [0,T]$ are $N_b$ collocation points for the boundary condition.
%
% then use the weak adversarial network to find the weak solution of \eqref{eq:parab}. To formulate the objective function $L(\theta, \eta)$ for the training neural network $u_{\theta}$ and $\varphi_{\eta}$, we first reformulate \eqref{eq:min_op_parab} with logarithm function and get the loss $L_{int}$ which has the same form of \eqref{eq:Lint}. For the boundary value condition, we again use the same expression in section \ref{sec:minmax} to denote $L_{bdry}$ with $\{(x^{j},t^{j})\}^{N_b}_{j=1}$ being the set of $N_b$ collocation points on the boundary $\partial\Omega\times[0,T]$. We formulate a new loss term, which denotes by $L_{init}$, to enforce the initial value condition. We assume that $\{x^{j}\}^{N_{a}}_{j=1}$ is a set of collocation data on $\Omega$, then the loss $L_{init}$ is

%
Similar as in Section \ref{subsec:improve}, we factorize $\varphi_{\eta}=w\cdot v_{\eta}$ where $w: \Omega_T\rightarrow \mathbb{R}$ is set to a function which vanishes on $\partial\Omega$ in advance.
The training process is similar as above, which is summarized in Algorithm \ref{alg:wan_parab}.
\begin{algorithm}[t!]
\caption{Weak Adversarial Network (WAN) for Solving high-dimensional PDEs in whole space $\Omega\times[0,T]$}
\label{alg:wan_parab}
\begin{algorithmic}
\STATE \textbf{Input:} {$N_r/N_b/N_{a}$: number of region/boundary/initial collocation points; $K_u/K_{\varphi}$.
$\Omega_T \triangleq \Omega\times[0,T]$.}
\STATE \textbf{Initialize:} Network architectures $\ut,\phieta:\Omega_T \to \mathbb{R}$ and parameters $\theta,\eta$.

\WHILE{not converged}
\STATE {Sample points $\{(x^{(j)}_r,t_r^{(j)}): j\in[N_r]\}\subset \Omega_T$, $\{(x^{(j)}_b,t_b^{(j)})  : j\in[N_b]\}\subset \partial \Omega\times[0,T]$, $\{x^{(j)}_{a} : j\in[N_{a}]\} \subset \Omega$}
\STATE \texttt{\# update weak solution network parameter}
\FOR{$k=1,\dots,K_u$}
\STATE {Update $\theta \leftarrow \theta - \tau_\theta \nabla_\theta L$ where $\nabla_\theta L$ in \eqref{eq:loss_for_u_parab} is approximated using $\{(x^{(j)}_r,t_r^{(j)})\}$ and $\{(x^{(j)}_b,t^{(j)}_b)\}$ and $\{x^{(j)}_{a}\}$.}
\ENDFOR
\STATE \texttt{\# update test function network parameter}
\FOR{$k=1,\dots,K_{\varphi}$ }
\STATE {Update $\eta \leftarrow \eta + \tau_\eta \nabla_\eta L$ where $\nabla_\eta L$ in \eqref{eq:loss_for_u_parab} is approximated using $\{(x^{(j)}_r, t^{(j)}_r)\}$.}
\ENDFOR
\ENDWHILE
\STATE \textbf{Output:} {Weak solution $\ut(x,t)$ in $\Omega_T$.}
\end{algorithmic}
\end{algorithm}
%%%%%%%%%%%%%%%%%%%%%%%%%%%%%%%%%%%%%%%%%%%%%%%%%%%%%%%%%%%%%%%%%%%%%%%%%%%%%%%%%%
\section{Numerical Experiments}
\label{sec:experiment}
\subsection{Experiment setup}
In this section, we conduct a series of numerical experiments of the proposed algorithms (Algorithms \ref{alg:wan}--\ref{alg:wan_parab}) on BVP and IBVP with high-dimensional linear and nonlinear PDEs defined on regular and irregular domains.
To quantitatively evaluate the accuracy of a solution $u_\theta$, we use the $L_2$ relative error $\|u_\theta-u^*\|_2/\|u^*\|_2$, where $u^*$ is the exact solution of the problem and $\|u\|_2^2 = \int_{\Omega} |u|^2 \dif x$.
To compute this error in high dimensional domain $\Omega$, we use a regular mesh grid of size $100 \times 100$ for $(x_1, x_2)$, and sampled one point $x$ for each of these grid points (i.e., for each grid point $(x_1, x_2)$, randomly draw values of $(x_3,\dots,x_d)$ of $x$ within the domain $\Omega$). These points are sampled in advance and then used for all comparison algorithms to compute their errors. They are different from those sampled during training processes.
In all experiments, we set both of the primal network (weak solution $u_\theta$) and the adversarial network (test function $\phieta$) in the proposed algorithms as fully-connected feed-forward networks.
Unless otherwise noted, $u_{\theta}$ network is set to have 6 hidden layers, with 40 neurons per hidden layer.
The activation functions of $u_{\theta}$ are set to tanh for layers 1, 2, 4  and 6, and elu for the problem in Section \ref{subsubsec:weak_vs_strong} and softplus for all other problems for layers 3 and 5.
We do not apply activation function in the last, output layer.
For the network $\phieta$, it consists of $8$ hidden layers with 50 neurons per hidden layer.
The activation functions are set to tanh for layers 1 and 2, softplus for layers 3, 5, and 8, sinc for layers 2, 5, and 7, and again no activation in the last layer.
The parameters $\theta$ and $\eta$ of the two networks are updated alternately based on \eqref{eq:Ltotal} for BVP and \eqref{eq:loss_for_u_parab} for IBVP using AdaGrad algorithm \cite{duchi2011adaptive}.
%
%The code is available on Github at \url{https://github.com/yaohua32/wan}.
%
Notations of all model and algorithm parameters are summarized in Table \ref{tab:notation} for quick reference.
The values of these parameters are given in the description of each experiment below.
%
% with Intel CPU 2.2GHz, Nvidia Tesla T4 and 16GB of memory.
%
\begin{table}[t]
    \caption{List of model and algorithm parameters.}
    \label{tab:notation}
    \centering
    \begin{tabular}{c|l}
        \hline
        \textbf{Notation} & \textbf{Stands for ...} \\
        \hline
        \hline
        $d$ & Dimension of $\Omega \subset \mathbb{R}^d$ \\
        \hline
        $K_{\varphi}$ & Inner iteration to update test function $\varphi_\eta$\\
        \hline
        $K_u$ & Inner iteration to update weak solution $u_\theta$\\
        \hline
        $\tau_{\eta}$ & Learning rate for network parameter $\eta$ of test function $\varphi_\eta$\\
        \hline
        $\tau_{\theta}$& Learning rate for network parameter $\theta$ of weak solution $u_\theta$\\
        \hline
        $N_{r}$ & Number of sampled collocation points in the \underline{r}egion $\Omega$\\
        \hline
        $N_{b}$ & Number of sampled collocation points on the \underline{b}oundary $\partial \Omega$ or $\partial \Omega\times [0,T]$\\
        \hline
        $N_{a}$ & Number of sampled collocation points in $\Omega=\Omega\times\{0\}$ at initi\underline{a}l time \\
        \hline
        $\alpha$ & Weight parameter of $\Lbdry(\theta)$ on the boundary $\partial\Omega$ \\
        \hline
        $\gamma$ & Weight parameter of $\Linit(\theta)$ for the initial value condition\\
        \hline
    \end{tabular}
\end{table}

\subsection{Experimental results}
\subsubsection{Weak form versus strong form}
\label{subsubsec:weak_vs_strong}
In the first test, we show that Algorithm \ref{alg:wan} based on the weak formulation of PDEs can be advantageous for problems with singularities.
Consider the following Poisson equation with Dirichlet boundary condition:
\begin{equation}\label{eq:eq-weak}
\begin{cases}
-\Delta u = f, &\quad \text{in}\ \Omega\\
u= g, &\quad \text{on}\  \partial\Omega \\
\end{cases}
\end{equation}
where $\Omega = (0,1)^2 \subset \mathbb{R}^2$, $f\equiv -2$ in $\Omega$, and $g(x_1,0)=g(x_1,1) = x_1^2$ for $0\le x_1 \le \frac{1}{2}$, $g(x_1,0)=g(x_1,1)=(x_1-1)^2$ for $\frac{1}{2}\le x_1 \le 1$, and $g(0,x_2)=g(1,x_2)=0$ for $0\le x_2 \le 1$ on $\partial \Omega$.
This problem does not admit a strong (classical) solution, but only a unique weak solution $u^*(x)=u^*(x_1,x_2)=x_1^2$ when $0 \le x_1\leq \frac{1}{2}$ and $u^*(x)= (x_1-1)^2$ when $\frac{1}{2} \le x_1 \le 1$, which is shown in Figure \ref{subfig:nonsmooth_u_true}.

We apply the proposed Algorithm \ref{alg:wan} to \eqref{eq:eq-weak} with $K_\varphi =1, K_u = 2$, $\tau_\eta = 0.04, \tau_\theta = 0.015$, $N_r=10^4$, $N_b = 4\times 100$ ($100$ collocation points on each side of $\Omega$), and $\alpha=10,000\times N_b$ for 100,000 iterations, after which we obtain an approximation $u_{\text{WAN}}$ shown in Figure \ref{subfig:nonsmooth_u_WAN}.
For comparison, we also apply two state-of-the-art deep learning based methods, the physics-informed neural networks (PINN) \cite{raissi2019physics}\footnote{PINN implementation obtained from \url{https://github.com/maziarraissi/PINNs}.} and the deep Ritz method (DRM) \cite{weinan2018deep}\footnote{DRM implementation obtained from \url{https://github.com/ZeyuJia/DeepRitzMethod}.}, to the same problem \eqref{eq:eq-weak}.
PINN \cite{raissi2019physics} is based on the strong form of \eqref{eq:eq-weak} where the loss function sums the squared errors in the PDE and the boundary condition at sampled points inside $\Omega$ and on $\partial \Omega$, respectively.
DRM \cite{weinan2018deep} is designed to solve a class of PDEs that can be reformulated as equivalent energy minimization problems. The constraint due to the boundary condition is formed as a penalty term and added to the energy in DRM \cite{weinan2018deep}.
We parameterize the solution $u$ using the default fully connected network activation function tanh for PINN and the residual network (ResNet) structure with activation function $\max(x^3,0)$ for DRM.
The weight of the boundary term in the loss function is set to $1$ for PINN and $500$ for DRM, which seem to yield optimal solution quality for these methods.
Same as WAN, the network $u$ has 6 hidden layers and 40 neurons per hidden layer in both PINN and DRM. The numbers $N_r$ and $N_b$ of sample points in $\Omega$ and on $\partial\Omega$ respectively are also the same as WAN.
We use Adam optimizer with step size $0.001$ as suggested in PINN and DRM.
%
% \begin{align}
%  L_{PINN}&= \frac{1}{N_r}\sum^{N_r}_{i=1}\bigg(-\Delta u_{PINN}(x_i)-f(x_i)\bigg)^2+ \frac{\alpha}{N_b}\sum^{N_b}_{j=1}(u_{PINN}(x_j)-g(x_j))^2
%  \label{eq:loss_PINN}
% \\
% L_{DRM}&= \frac{1}{N_r}\sum^{N_r}_{i=1}\bigg(\frac{1}{2}|\nabla u_{DRM}(x_i)|^2-f(x_i)u_{DRM}(x_i)\bigg)+ \frac{\alpha}{N_b}\sum^{N_b}_{j=1}(u_{DRM}(x_j)-g(x_j))^2 \label{eq:loss_DRM}
% \end{align}
%
%\begin{equation*}
%L_{WAN}= \frac{1}{N_r}\bigg(\sum^{N_r}_{i=1}\nabla u_{WAN}(x_i)\nabla v(x_i)+\sum^{N_r}_{i=1} f(x_i)v(x_i)\bigg)^2/\sum^{N_r}_{i=1}|v(x_i)|^2+\frac{\alpha}{N_b}\sum^{N_b}_{j=1}(u_{WAN}(x_j)-g(x_j))^2
%\end{equation*}
%
The results after 100,000 iterations of these two comparison methods, denoted by $u_{\text{PINN}}$ and $u_{\text{DRM}}$, are shown in Figures \ref{subfig:nonsmooth_u_PINN} and \ref{subfig:nonsmooth_u_DRM}, respectively.
The comparison of relative error versus time is shown in Figure \ref{subfig:nonsmooth_error_time}, and the change process of objective functions over the number of iterations by PINN, DRM, and WAN are shown in Figures \ref{subfig:nonsmooth_loss_PINN}, \ref{subfig:nonsmooth_loss_DRM}, and \ref{subfig:nonsmooth_loss_WAN}, respectively.
(We plot these figures separately since the objective functions are defined differently in these methods).
We also plot the absolute value of residual $|\Delta u_{\text{PINN}} - f|$ for PINN in Figure \ref{subfig:nonsmooth_f}, which shows that the solution of PINN based on strong form satisfies the PDE in $\Omega$ closely.
However, the pointwise absolute errors $|u-u^*|$ with $u$ obtained by PINN, DRM, and WAN shown in Figures \ref{subfig:nonsmooth_abs_PINN}, \ref{subfig:nonsmooth_abs_DRM} and \ref{subfig:nonsmooth_abs_WAN} (note the significantly lower scale in color bar in Figure \ref{subfig:nonsmooth_abs_WAN}) imply that WAN can better capture the singularity of solution due to its use of weak form of the PDE.
We tested a variety of network parameters and activation functions for PINN and DRM, but they all yield solutions with similar patterns as shown in Figures \ref{subfig:nonsmooth_u_PINN} and \ref{subfig:nonsmooth_u_DRM}.
For example, we used larger weight on the boundary term to enforce better alignment of $u_{\text{PINN}}$ with $g$ on $\partial \Omega$, but this resulted in even worse matching of $\Delta u_{\text{PINN}}$ and $f$ in $\Omega$, and vice versa.
We believe it is because the solution $u_{\text{PINN}}$ based on the strong form tends to enforce smoothness in $\Omega$ and aligns with the boundary value $g$ on $\partial\Omega$ by violating both the PDE and the boundary condition slightly, but this results in a smooth solution severely biased from $u^*$ as shown in Figure \ref{subfig:nonsmooth_u_PINN}.
DRM also suffers the issue of problem singularity due to the heavy penalty term $\int_{\Omega} |\nabla u|^2 \dif x$ on the gradient $\nabla u$ in the objective function.
On the other hand, the solution $u_{\text{WAN}}$ obtained by Algorithm \ref{alg:wan} can capture the singularities at the center line and faithfully recover the solution $u^*$ as shown in Figure \ref{subfig:nonsmooth_u_WAN}.
Furthermore, we also tested the algorithm reported in \cite{xu2018deep}, which is also based on the strong form of PDEs, on this problem and obtained similar results as PINN.
%\ye{1. Need to clearly state how the objective function of WAN is computed: did you compute $L(\theta,\eta)$? If yes we should avoid calling it ``loss function'' and it's not necessarily decreasing either. 2. Double check Figure 1i, do you mean the relative errors of PINN and WAN are always above 1? }\zang{1. Yes, I computed the $L(\theta, \eta)$ with the term $L_{int}$ without taking log. 2. For PINN, the answer is yes. For DRM, the relative error is less than 1 in the early stage, and then increased.}

%%%%%%%%%%%%%%%%%%%%%%%%%%%%%%%%%%%%%%%%%%%%%%%%%%%%%%%%%%%%%%%%%%%%%%%%%%%%%%%%%%%%%%%%%%%%%%%%%%
\begin{figure}[ht!]
\centering
\begin{subfigure}[b]{.245\textwidth}
\includegraphics[width=\textwidth]{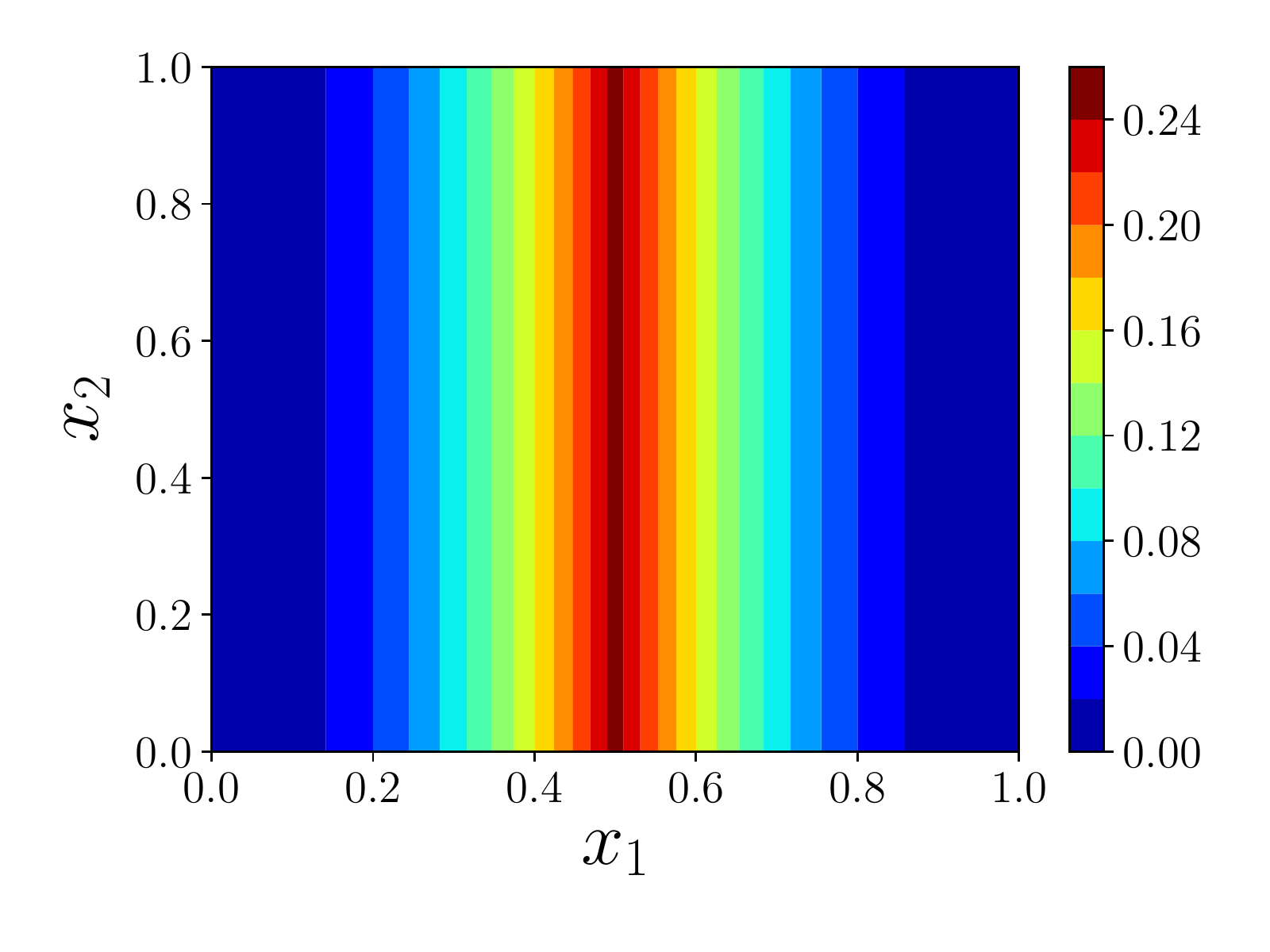}
\caption{True $u^*$}
\label{subfig:nonsmooth_u_true}
\end{subfigure}
\begin{subfigure}[b]{.245\textwidth}
\includegraphics[width=\textwidth]{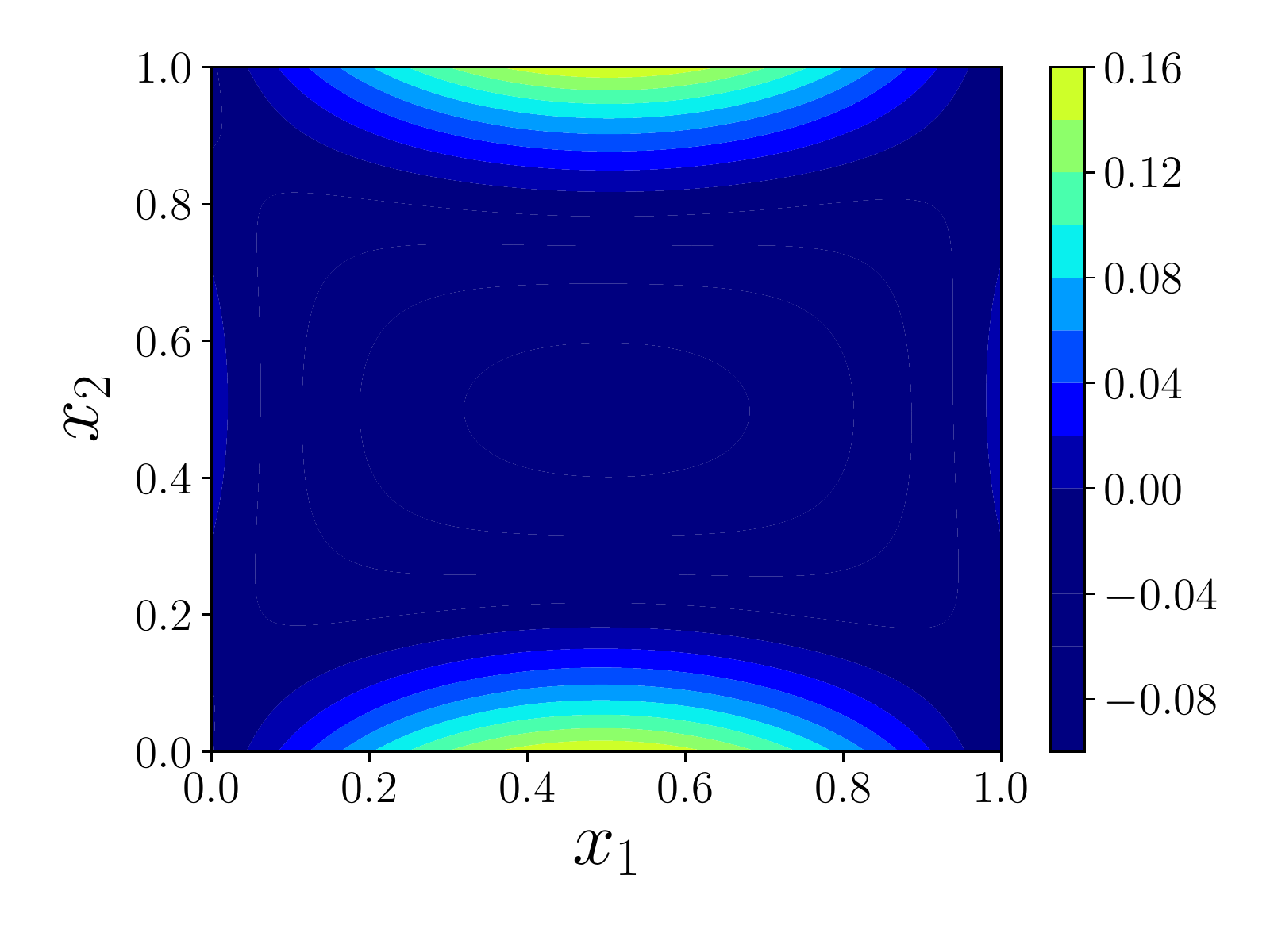}
\caption{$u_{\text{PINN}}$}
\label{subfig:nonsmooth_u_PINN}
\end{subfigure}
\begin{subfigure}[b]{.245\textwidth}
\includegraphics[width=\textwidth]{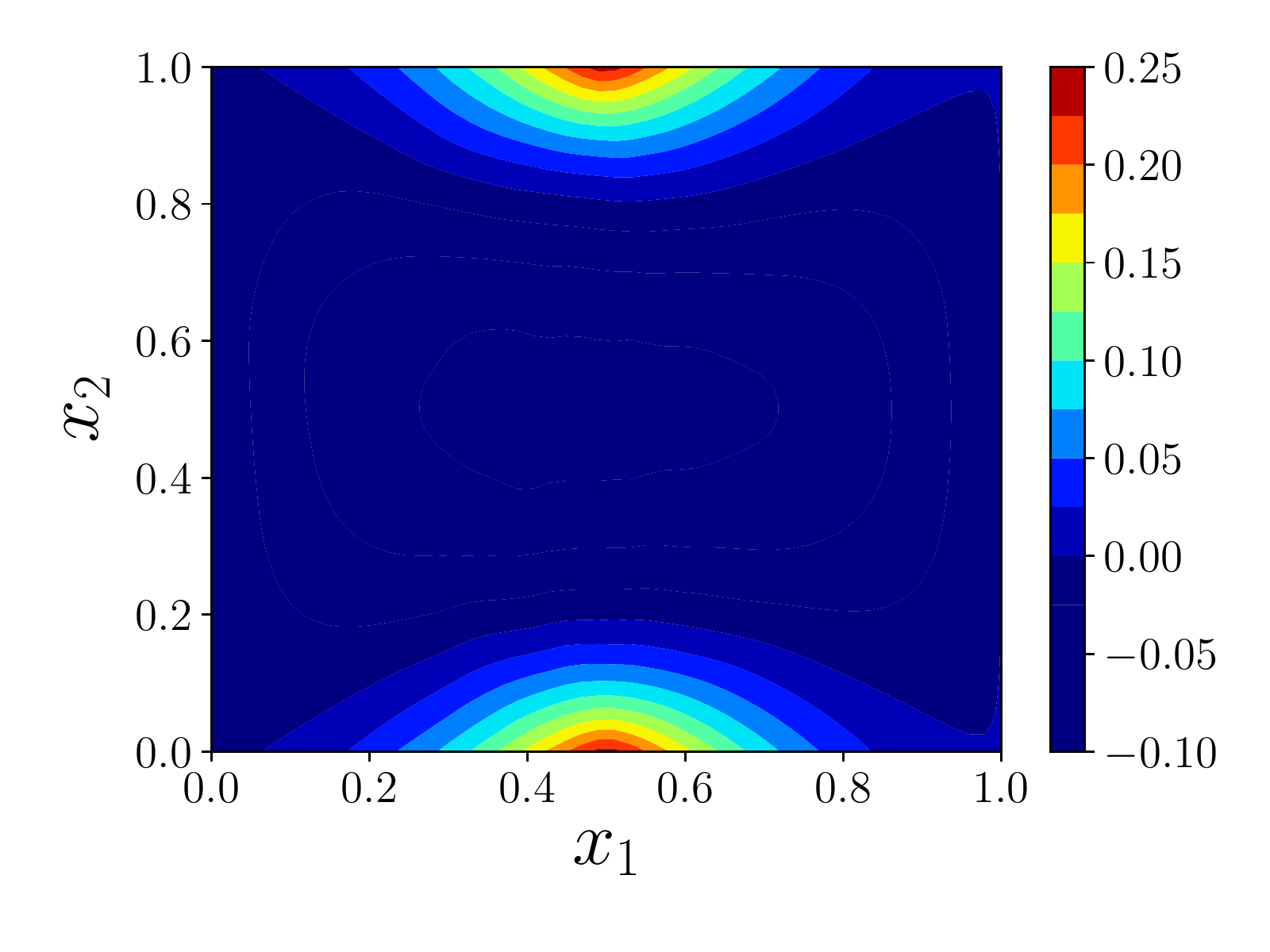}
\caption{$u_{\text{DRM}}$}
\label{subfig:nonsmooth_u_DRM}
\end{subfigure}
\begin{subfigure}[b]{.245\textwidth}
\includegraphics[width=\textwidth]{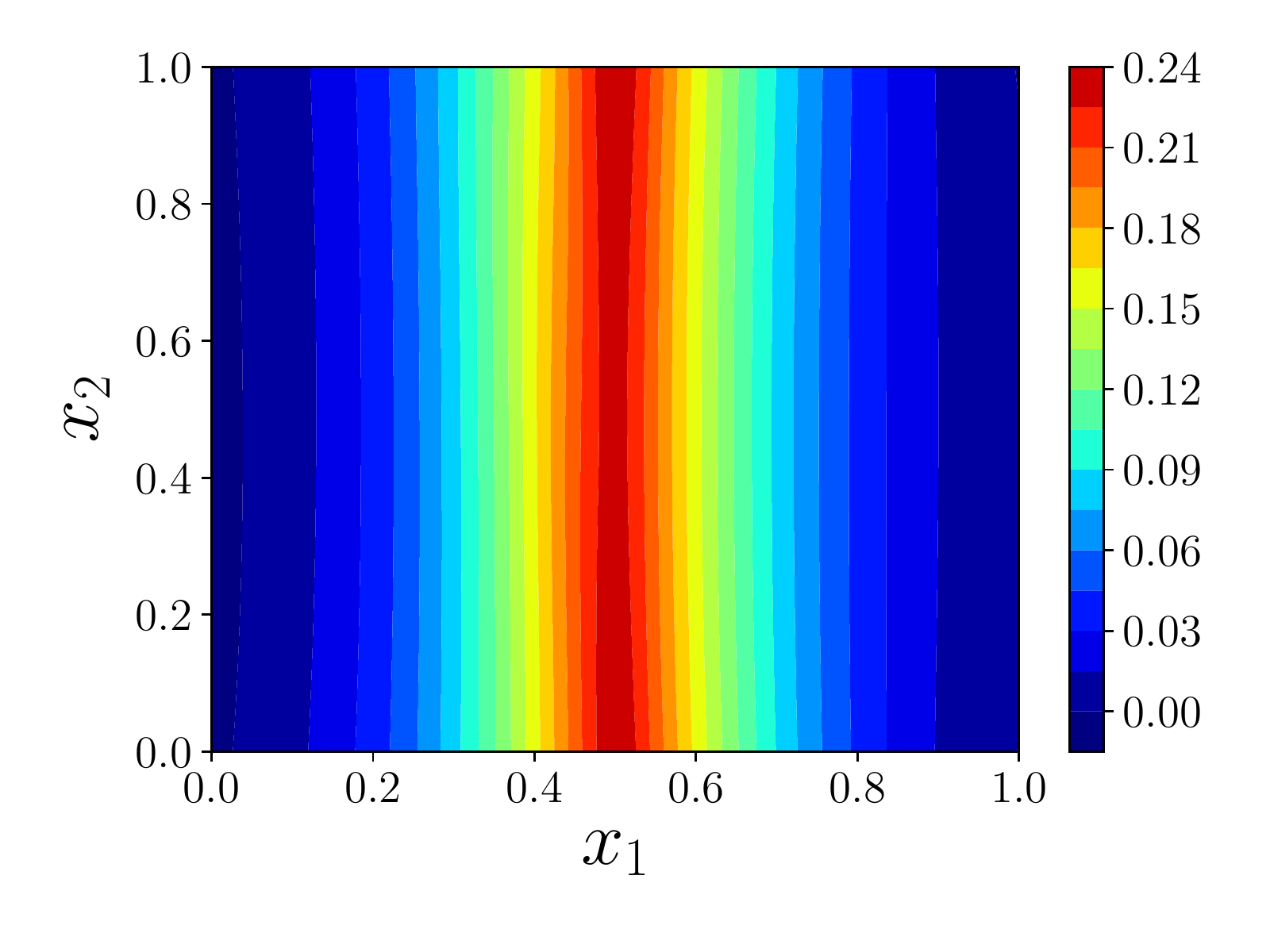}
\caption{$u_{\text{WAN}}$}
\label{subfig:nonsmooth_u_WAN}
\end{subfigure} \\
\begin{subfigure}[b]{.245\textwidth}
\includegraphics[width=\textwidth]{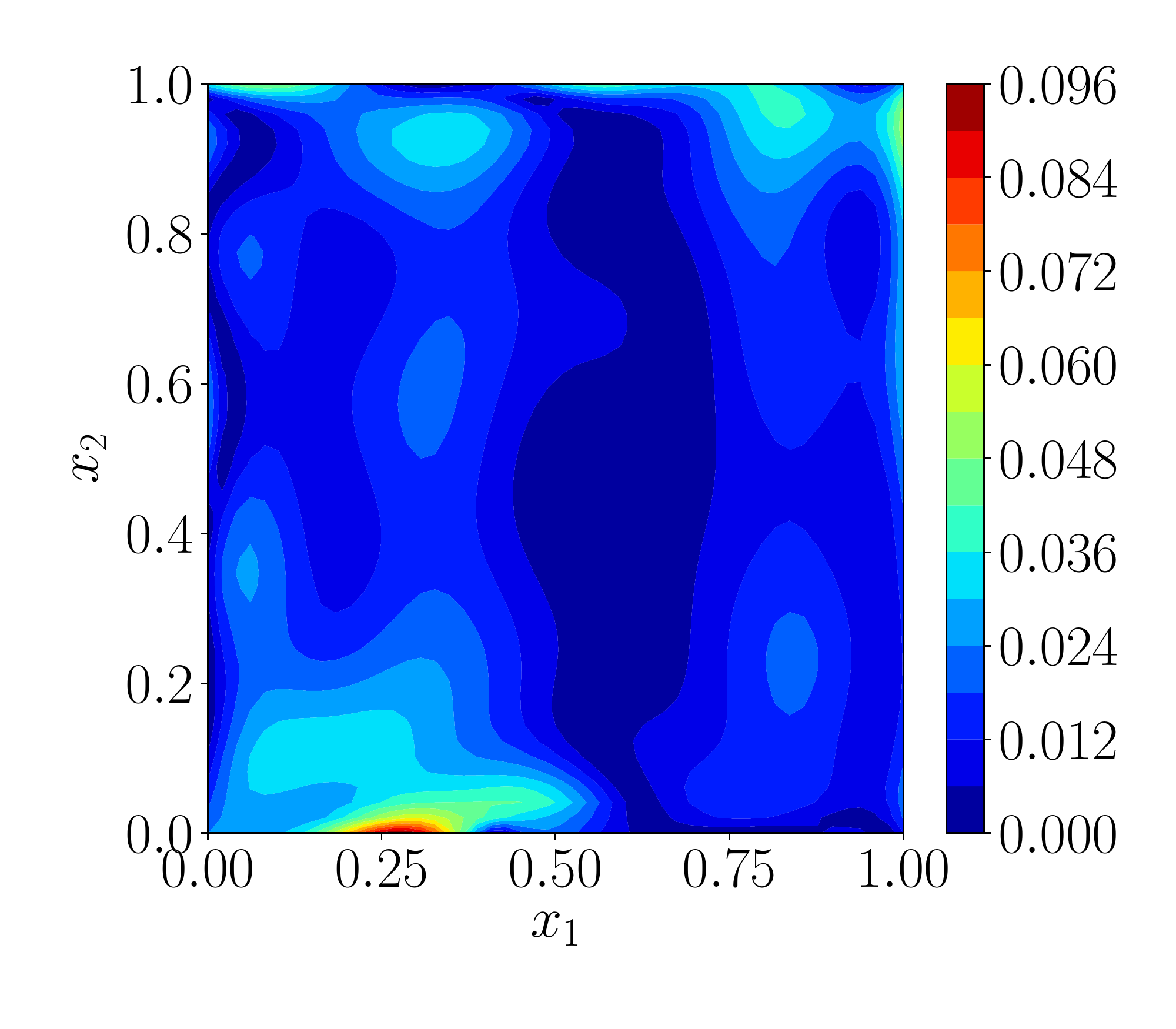}
\caption{$|-\Delta u_{\text{PINN}}-f|$}
\label{subfig:nonsmooth_f}
\end{subfigure}
\begin{subfigure}[b]{.245\textwidth}
\includegraphics[width=\textwidth]{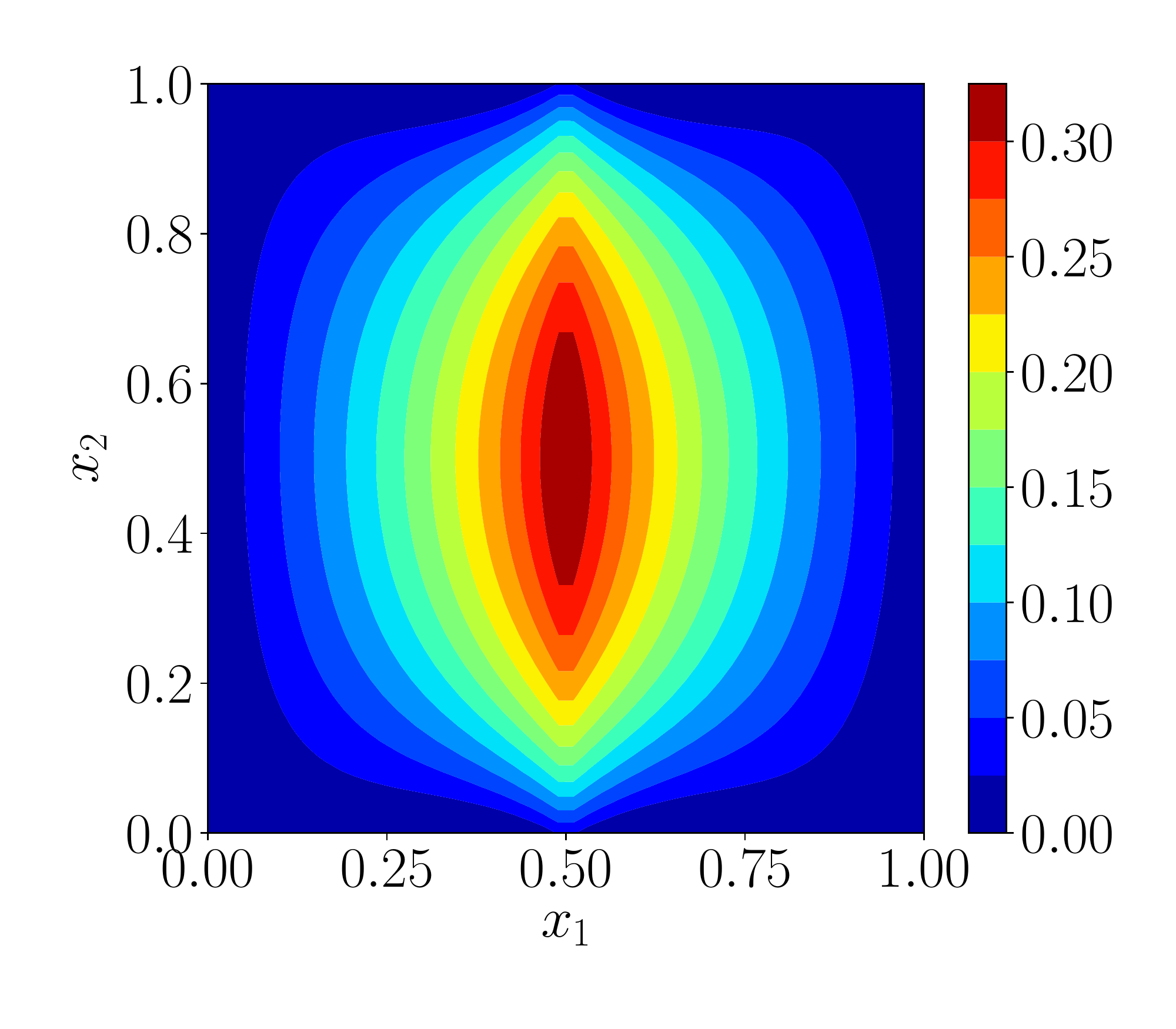}
\caption{$|u_{\text{PINN}}-u^*|$}
\label{subfig:nonsmooth_abs_PINN}
\end{subfigure}
\begin{subfigure}[b]{.245\textwidth}
\includegraphics[width=\textwidth]{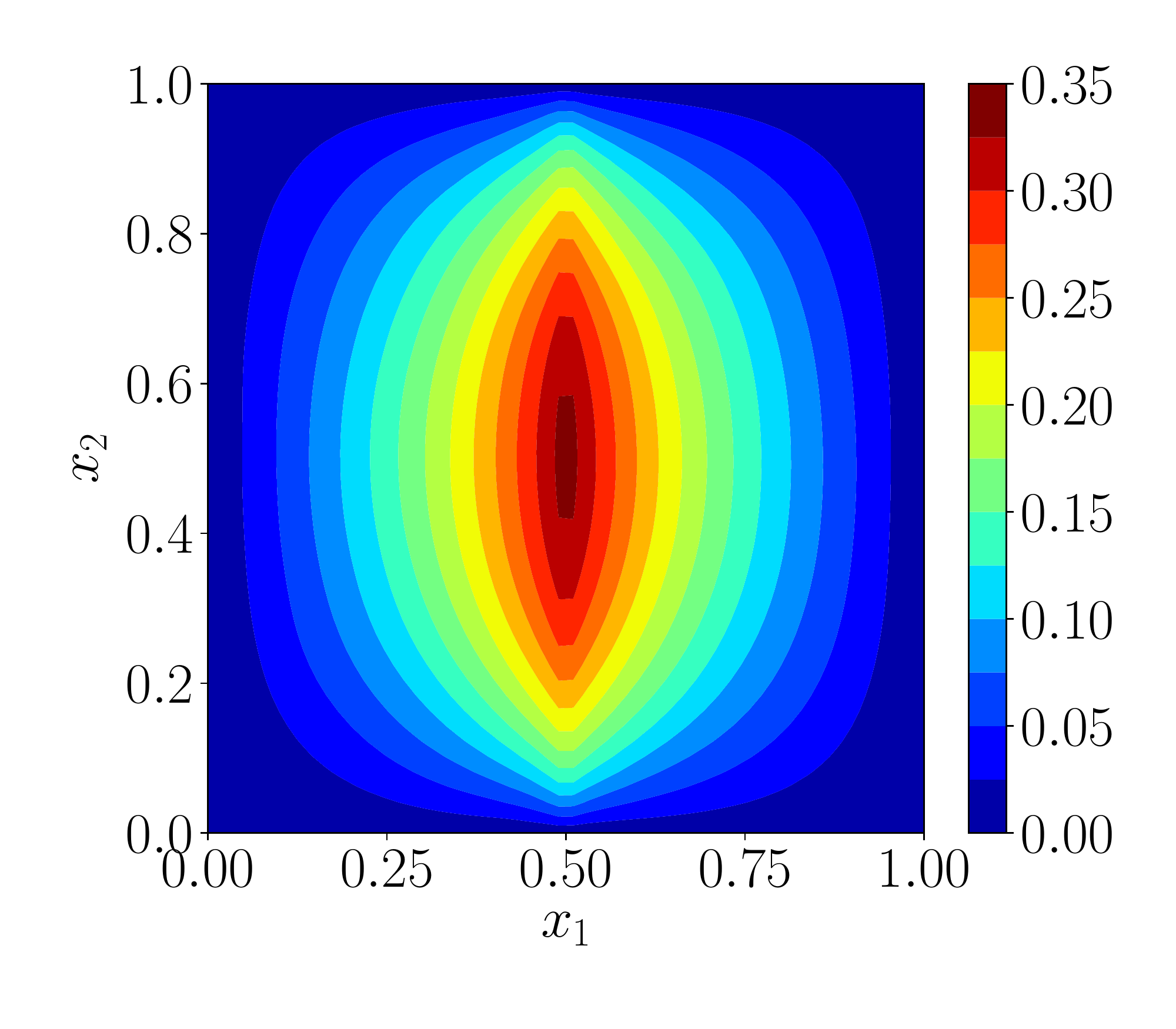}
\caption{$|u_{\text{DRM}}-u^*|$}
\label{subfig:nonsmooth_abs_DRM}
\end{subfigure}
\begin{subfigure}[b]{.245\textwidth}
\includegraphics[width=\textwidth]{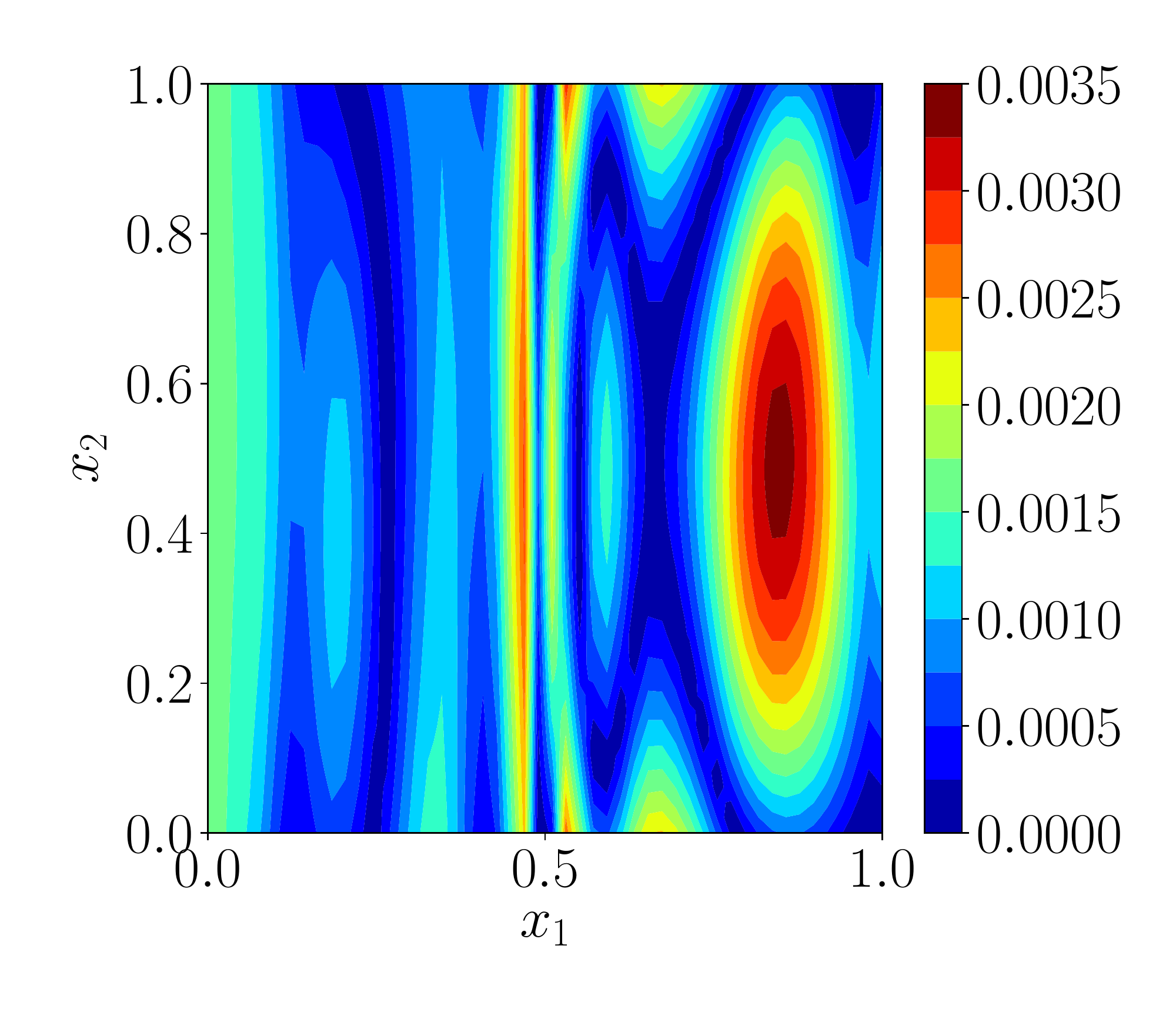}
\caption{$|u_{\text{WAN}}-u^*|$}
\label{subfig:nonsmooth_abs_WAN}
\end{subfigure} \\
\begin{subfigure}[b]{.245\textwidth}
\includegraphics[width=\textwidth]{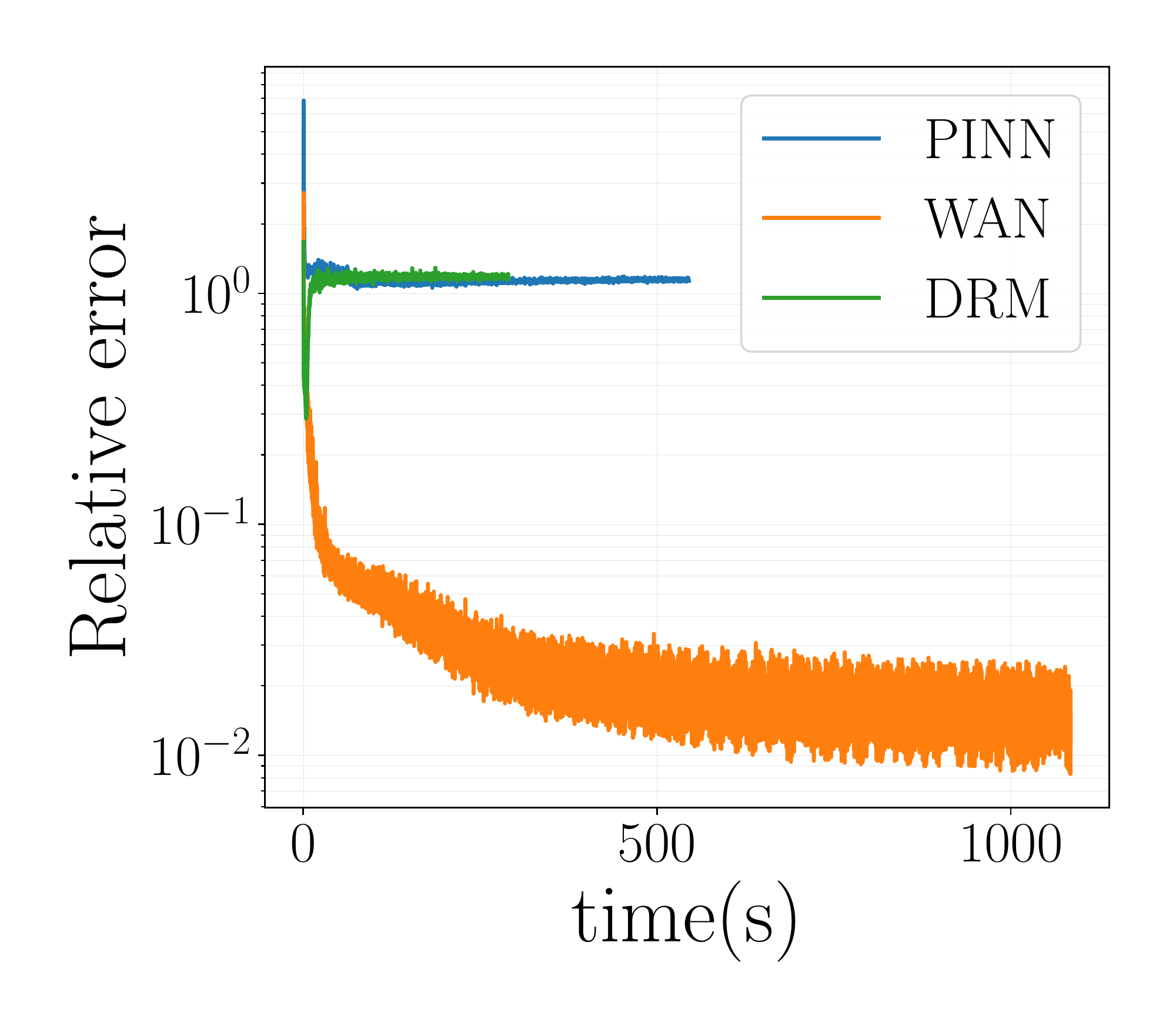}
\caption{Error vs time (s)}
\label{subfig:nonsmooth_error_time}
\end{subfigure}
\begin{subfigure}[b]{.245\textwidth}
\includegraphics[width=\textwidth]{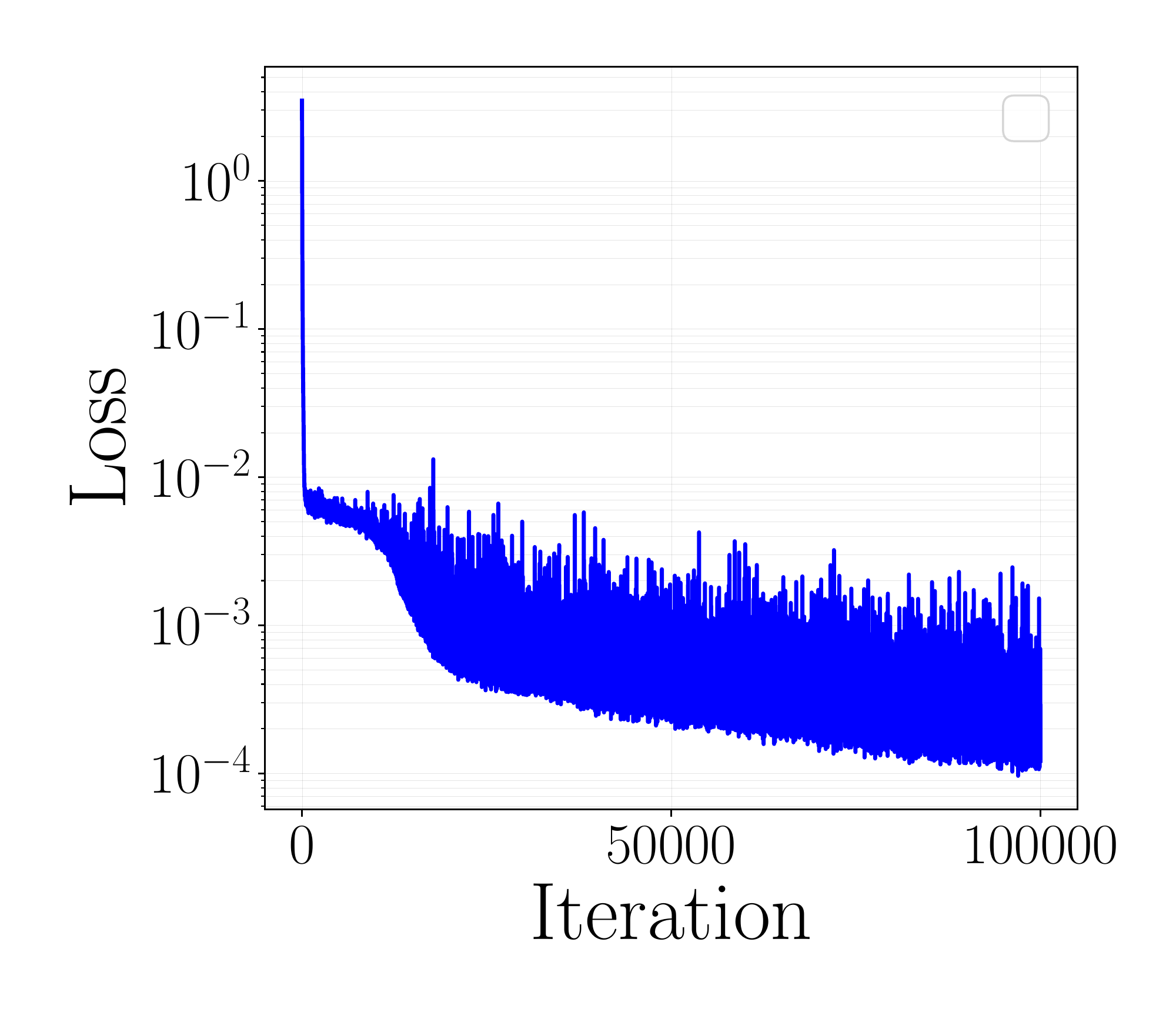}
\caption{Loss vs iter by PINN}
\label{subfig:nonsmooth_loss_PINN}
\end{subfigure}
\begin{subfigure}[b]{.245\textwidth}
\includegraphics[width=\textwidth]{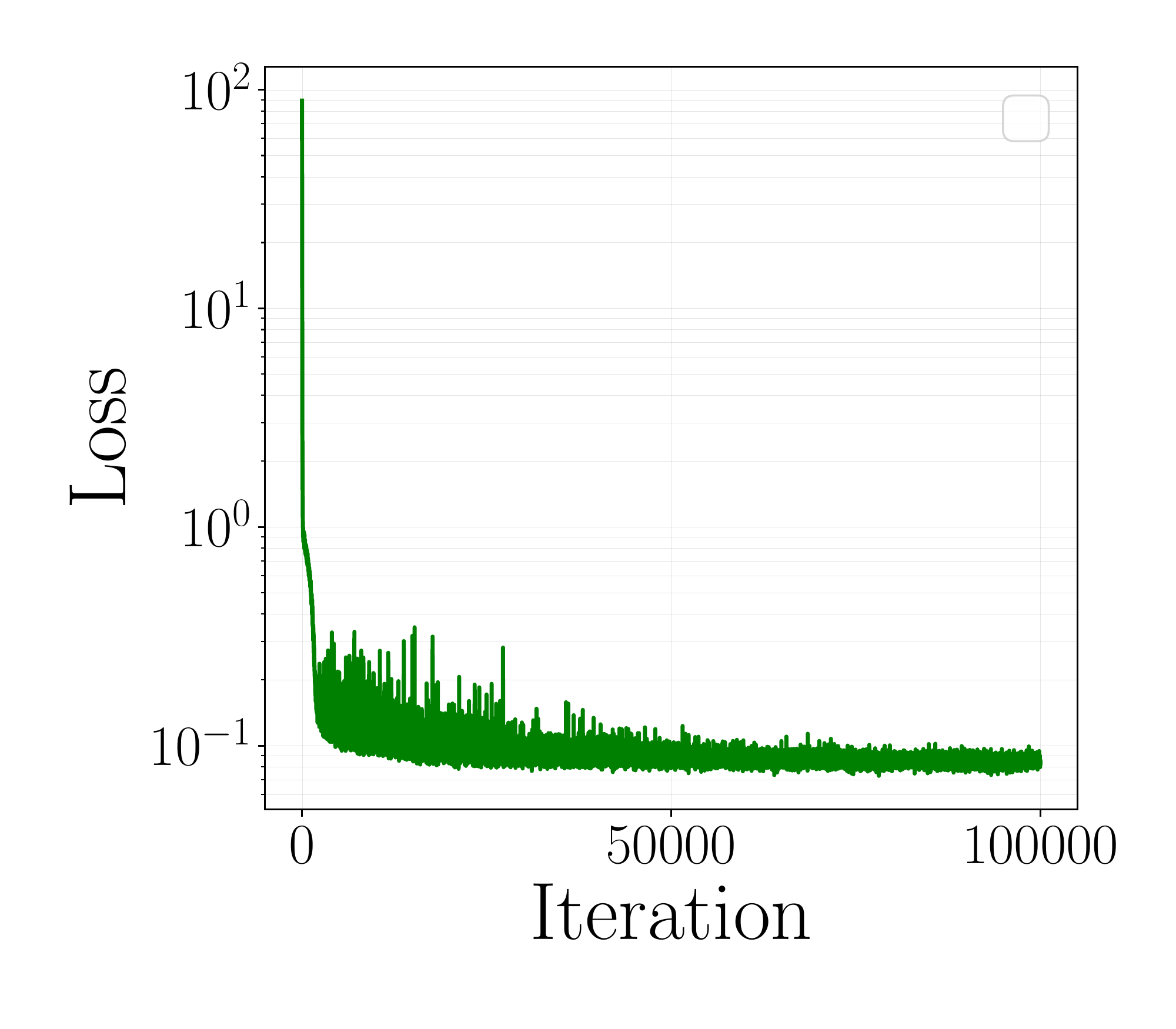}
\caption{Loss vs iter by DRM}
\label{subfig:nonsmooth_loss_DRM}
\end{subfigure}
\begin{subfigure}[b]{.245\textwidth}
\includegraphics[width=\textwidth]{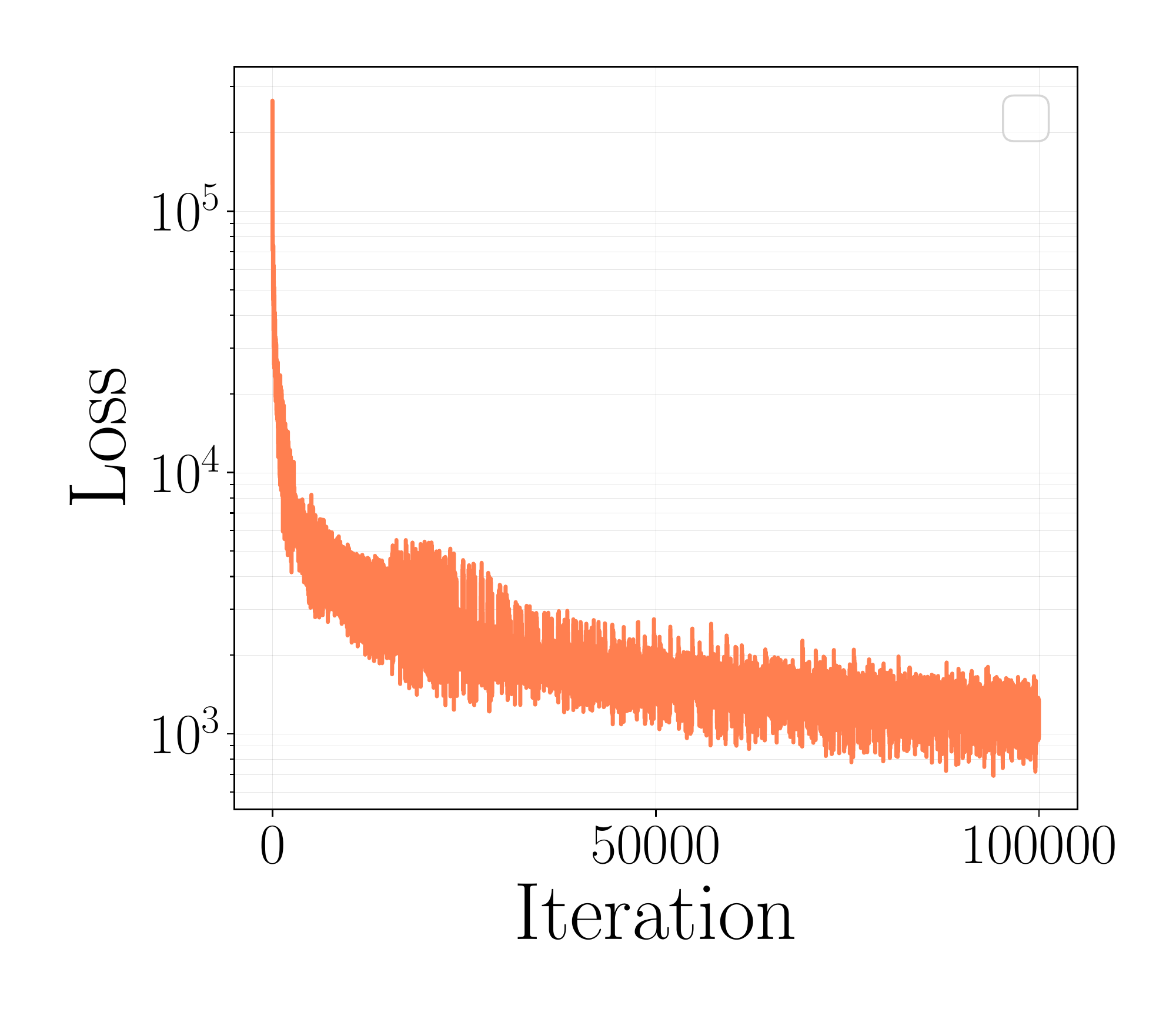}
\caption{Loss vs iter by WAN}
\label{subfig:nonsmooth_loss_WAN}
\end{subfigure}
\caption{Results of \eqref{eq:eq-weak}.
(a) The true solution $u^*$;
(b) $u_{\text{PINN}}$ obtained by PINN \cite{raissi2019physics} based on the strong form;
(c) $u_{\text{DRM}}$ obtained by DRM \cite{weinan2018deep};
(d) $u_{\text{WAN}}$ obtained by the proposed algorithm \ref{alg:wan} WAN based on the weak form;
(e) $|-\Delta u_{\text{PINN}}-f|$ by PINN;
(f), (g), (h) Pointwise absolute error $|u-u^*|$ by PINN, DRM, and WAN respectively;
(i) Relative error versus computation time (in seconds) of PINN, DRM, and WAN;
(j), (k), (l) the objective function versus iteration number of PINN, DRM, and WAN respectively.}
\label{fig:continue}
\end{figure}

\subsubsection{High dimensional smooth problem}
We also check how competitive the proposed Algorithm \ref{alg:wan} WAN is on a simple smooth problem that admits strong solution.
We again consider a Poisson equation with Dirichlet boundary condition:
\begin{equation}\label{eq:smooth_poisson}
\begin{cases}
-\Delta u = \frac{\pi^2}{4}\sum^{d}_{i=1}\sin(\frac{\pi}{2} x_i), &\quad \text{in}\ \Omega= (0,1)^d \\
u= \sum^{d}_{i=1}\sin(\frac{\pi}{2} x_i), &\quad \text{on}\  \partial\Omega \\
\end{cases}
\end{equation}
with problem dimension $d=5$.
Problem \eqref{eq:smooth_poisson} has a strong (classical) solution $u^*(x)=\sum^{d}_{i=1}\sin(\frac{\pi}{2} x_i)$ in $\bar{\Omega}$.
For comparison, we also apply the state-of-the-art methods PINN \cite{raissi2019physics} and DRM \cite{weinan2018deep} which are particularly suitable for such problems.
For sake of fair comparison, in WAN, we set $K_\varphi =1, K_u = 1$, $\tau_\eta = 0.015, \tau_\theta = 0.001$, $N_r=10^4$, $N_b = 2d\times 30$ ($30$ points uniformly sampled on each of the $2d$ sides of $\Omega$), and $\alpha=10,000$.
We use Adam optimizer for updating $\theta$ and AdaGrad optimizer for $\eta$.
For the PINN and the DRM, we follow the same parameter settings as in Section \ref{subsubsec:weak_vs_strong}, and use the same $N_b$ and $N_r$ as WAN here.
We run all methods for $20,000$ iterations, and show the pointwise absolute error $|u-u^*|$ in Figure \ref{subfig:smooth_abserr}.
We also show their change process of relative error versus time (s) in Figure \ref{subfig:smooth_error_time} and the objective functions versus iteration number in Figure \ref{subfig:smooth_loss_iter}.
The objective functions are computed in the same way as above.
From Figure \ref{subfig:smooth_loss_iter}, we can see that all methods have almost reached their limit after 20,000 iterations as the objective functions do not have substantial improvements.
This can also be seen from Figure \ref{subfig:smooth_error_time}, where the relative errors of all methods are lower than 1\% which means their results are very close to the true solution $u^*$.
Moreover, we can see that PINN obtains the highest accuracy after 20,000 iterations, closely followed by WAN.
DRM also obtains a satisfactory accuracy level with the fastest convergence.
It is worth noting that PINN requires computation of $\Delta u$ (i.e., second-order partial derivatives of the deep network $u$ with respect to its input $x$) which is a bit more expensive than that of first-order gradient $\nabla u$.
\begin{figure}[!ht]
\begin{subfigure}[b]{\textwidth}
\centering
\includegraphics[width=.8\linewidth]{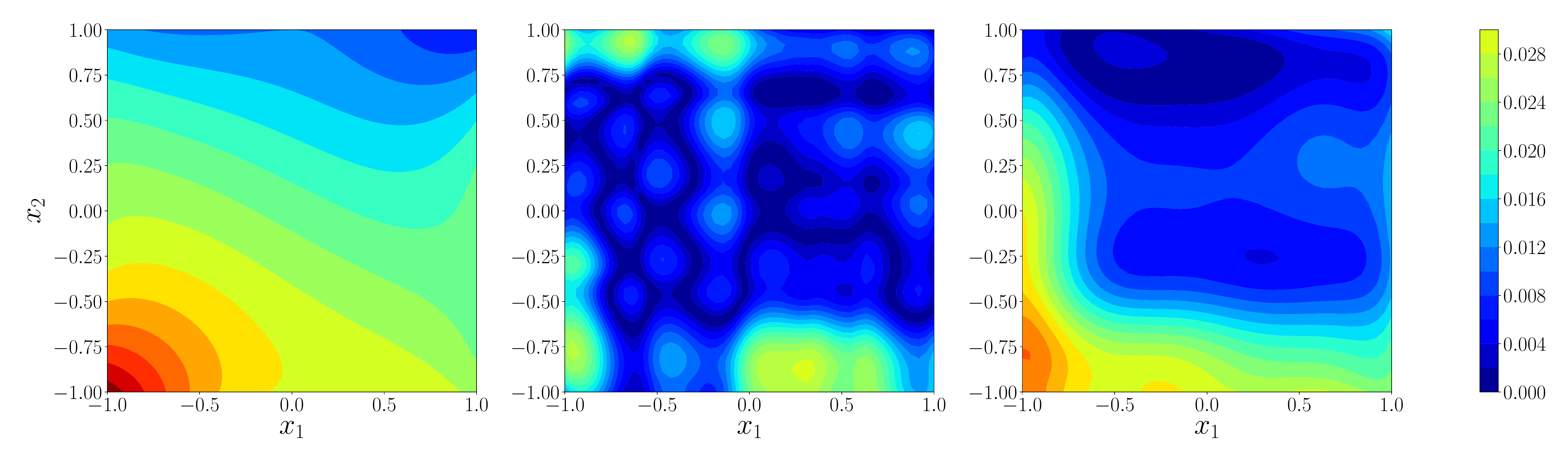}
\caption{$|u-u^*|$ with $u$ obtained by PINN (left), DRM (middle), and WAN (right).}
\label{subfig:smooth_abserr}
\end{subfigure}
\begin{subfigure}[b]{0.24\textwidth}
\centering
\includegraphics[width=\textwidth]{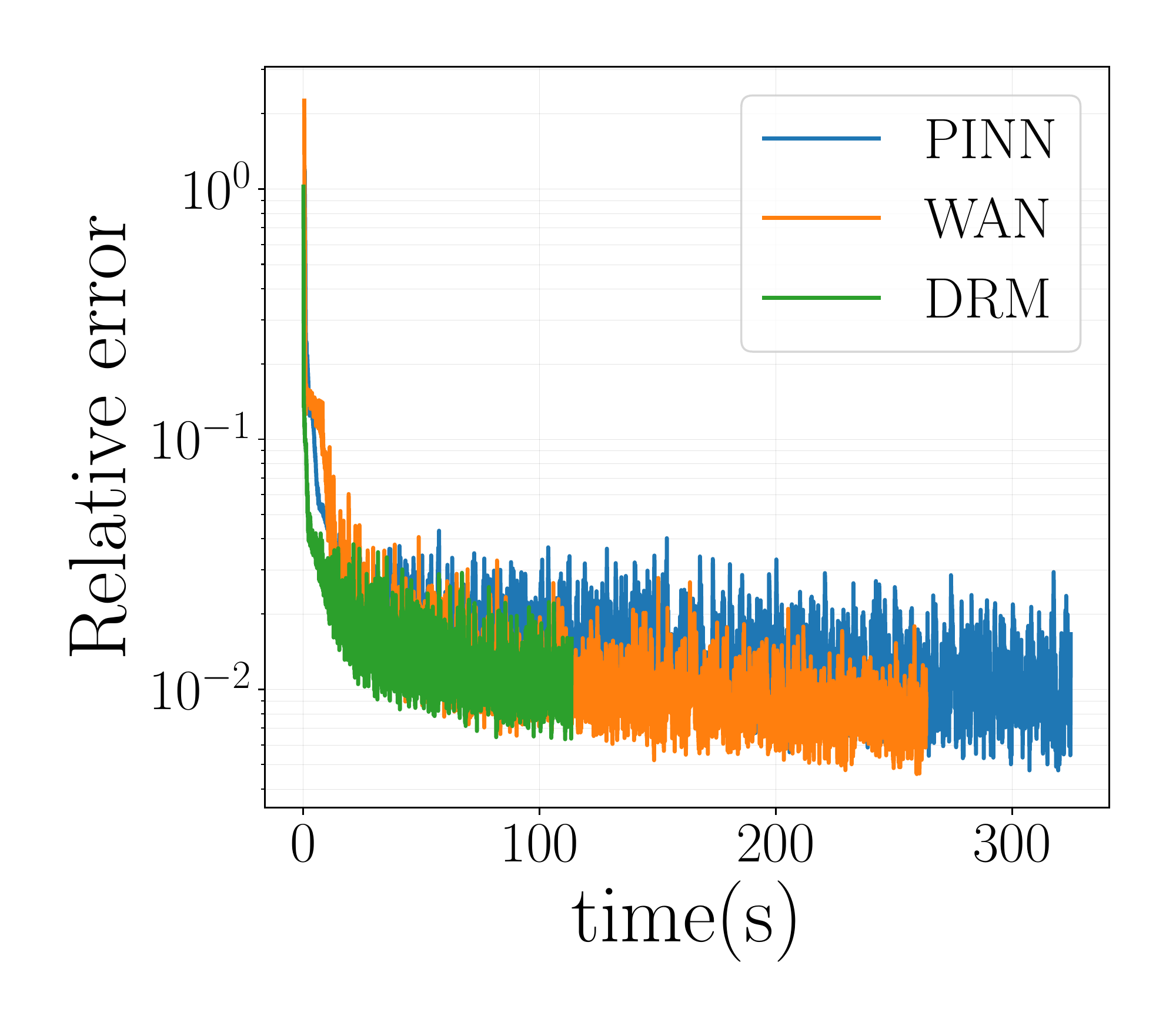}
\caption{Error vs time (s).}
\label{subfig:smooth_error_time}
\end{subfigure}
\begin{subfigure}[b]{.74\textwidth}
\includegraphics[width=.32\textwidth]{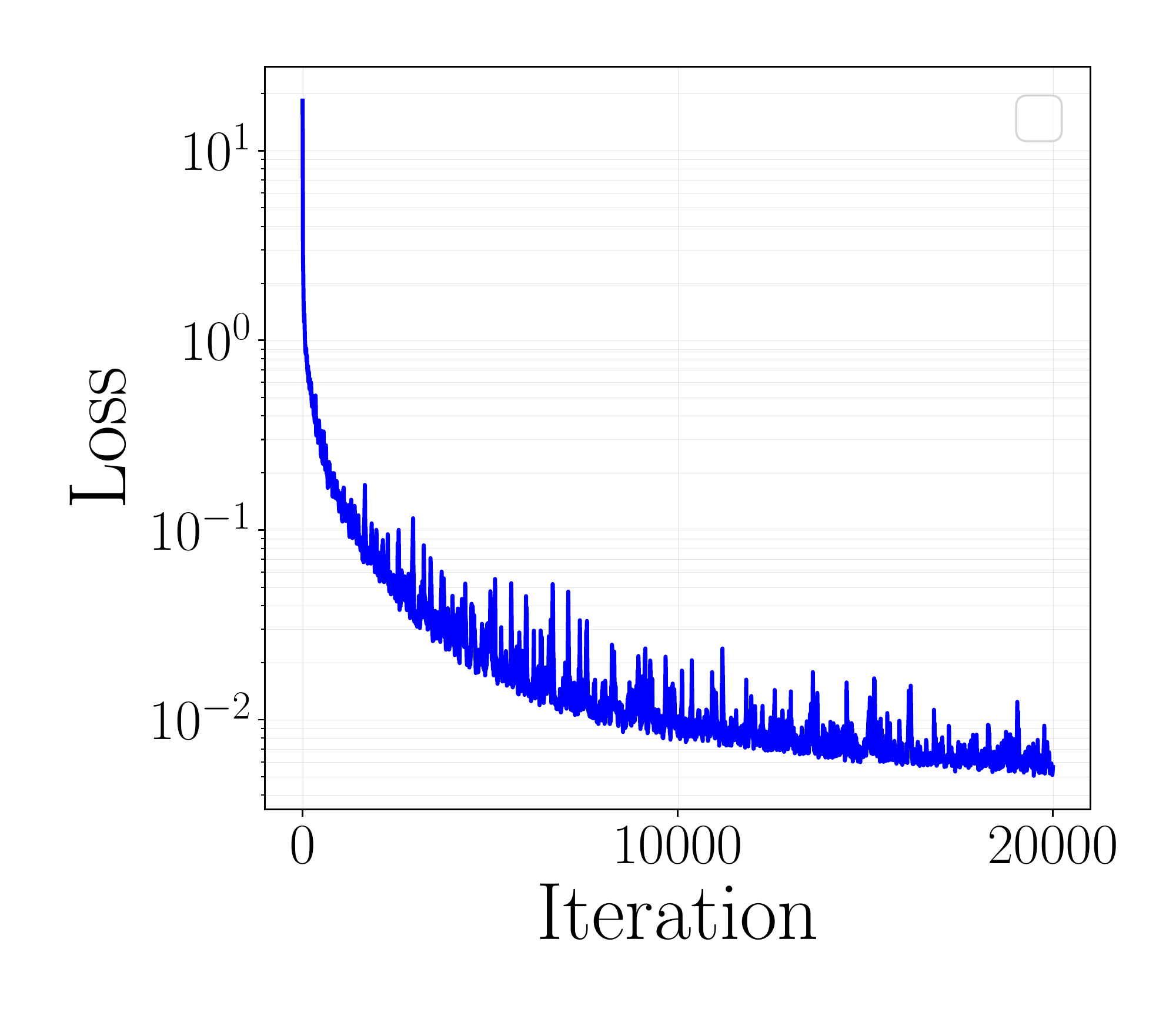}
\includegraphics[width=.32\textwidth]{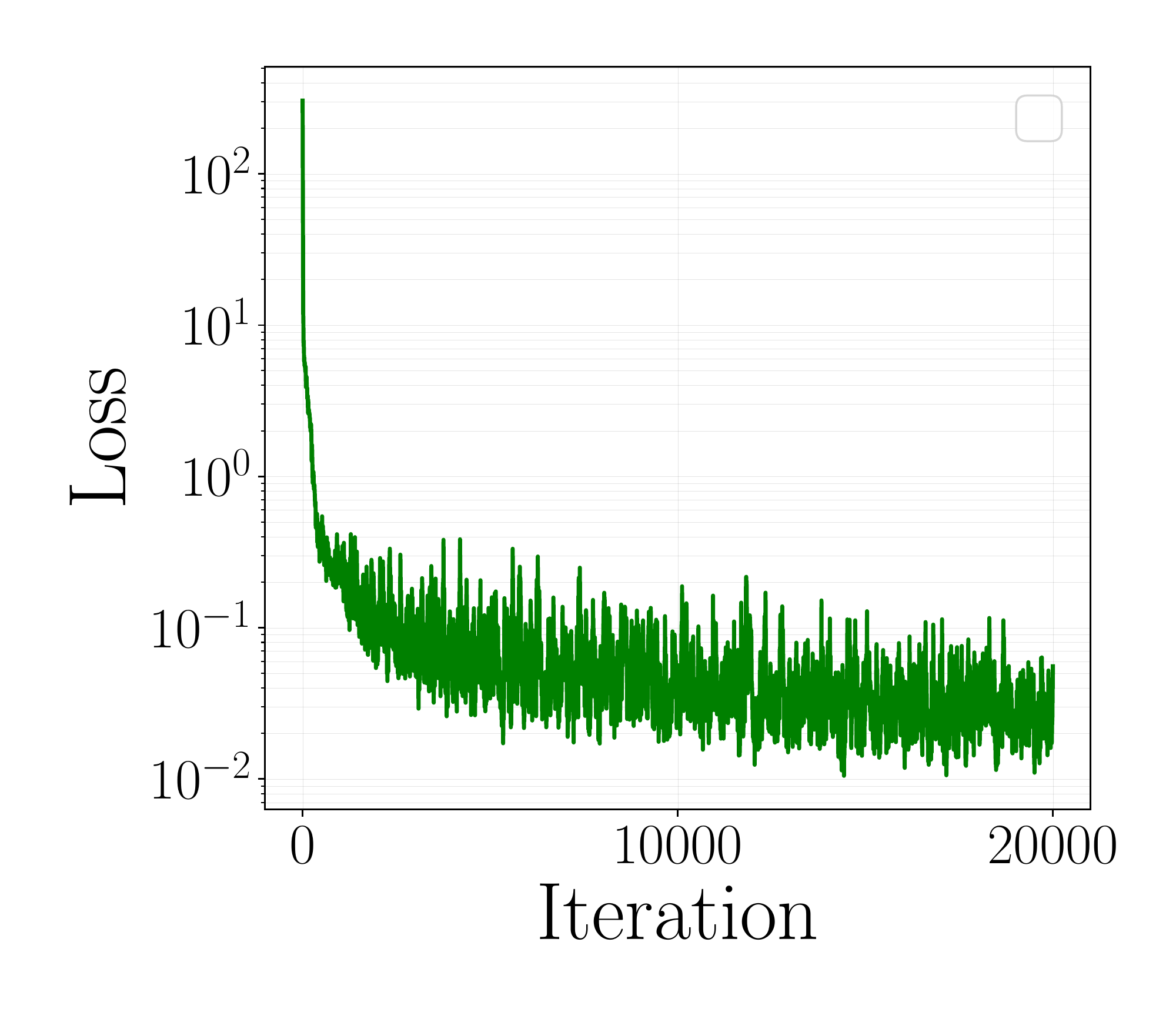}
\includegraphics[width=.32\textwidth]{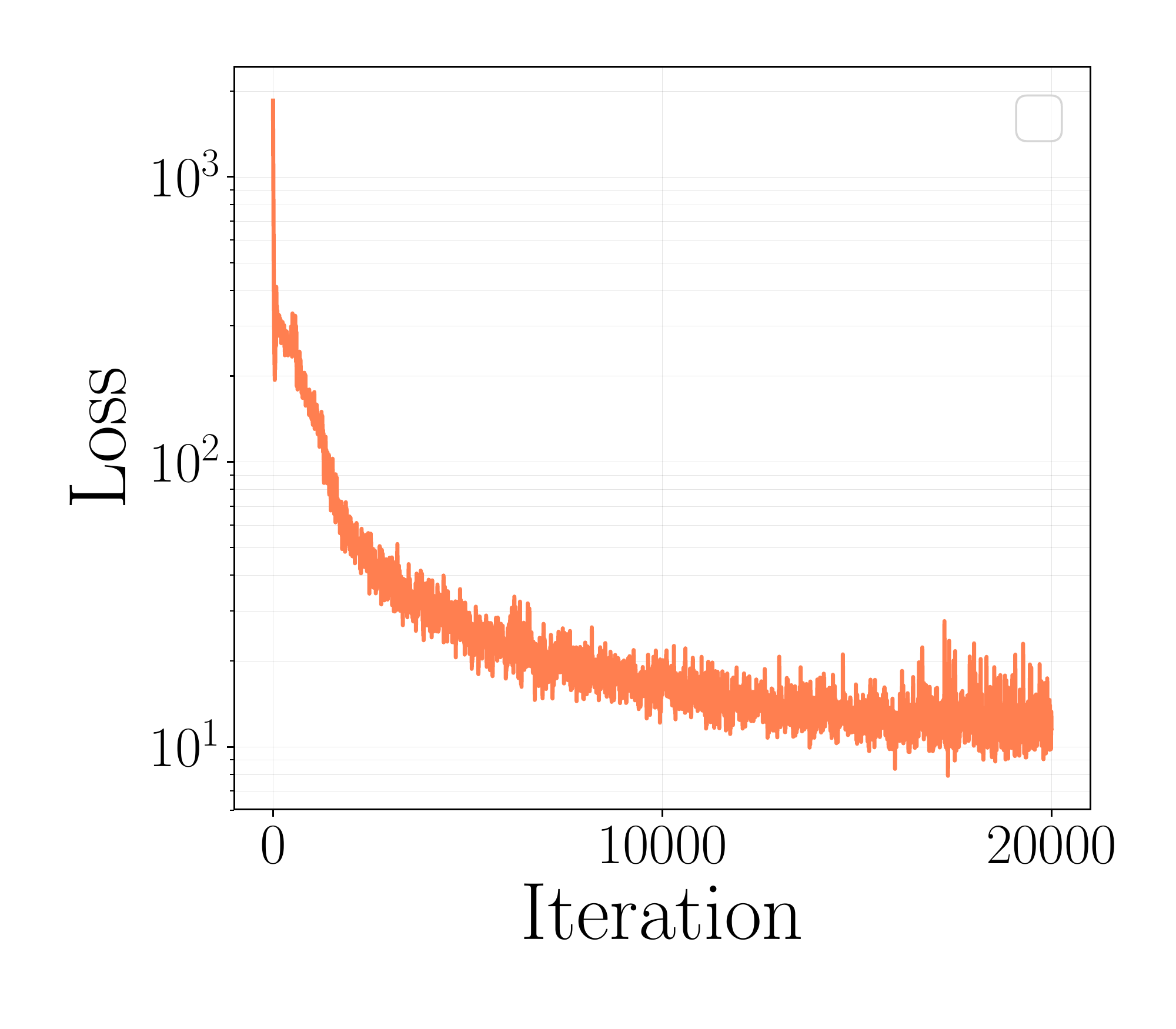}
\caption{Loss vs iteration by PINN (left), DRM (middle), and WAN (right).}
\label{subfig:smooth_loss_iter}
\end{subfigure}
\caption{Comparison of the solutions to \eqref{eq:smooth_poisson} based on the PINN, DRM, and WAN.
(a) The pointwise absolute error of $|u-u^*|$ with $u$ obtained by PINN (left), DRM (middle), and the proposed WAN (right).
(b) The change process of relative error versus time in seconds.
(c) The change process of the objective function versus iteration number by PINN (left), DRM (middle), and the proposed WAN (right).}
\label{fig:strong_solution_test}
\end{figure}
%%%%%%%%%%%%%%%%%%%%%%%%%%%%%%%
%\begin{figure}[!ht]
%
%\begin{subfigure}[b]{\textwidth}
%\centering
%\includegraphics[width=.8\linewidth]{fig/example_2/abs_example_1step_5d.pdf}
%\caption{$|u-u^*|$ with $u$ obtained by PINN (left), DRM (middle), and WAN (right).}
%\label{subfig:smooth_abserr_1step}
%\end{subfigure}
%
%\begin{subfigure}[b]{0.24\textwidth}
%\centering
%\includegraphics[width=\textwidth]{fig/example_2/time_example_1step_list_0d.pdf}
%\caption{Error vs time (s).}
%\label{subfig:smooth_error_time_1step}
%\end{subfigure}
%
%\begin{subfigure}[b]{.74\textwidth}
%\includegraphics[width=.32\textwidth]{fig/example_2/loss_example_1step_PINN_5d.pdf}
%\includegraphics[width=.32\textwidth]{fig/example_2/loss_example_1step_ritz_5d.pdf}
%\includegraphics[width=.32\textwidth]{fig/example_2/loss_example_1step_WAN_5d.pdf}
%\caption{Error vs iteration by PINN (left), DRM (middle), and WAN (right).}
%\label{subfig:smooth_loss_iter_1step}
%\end{subfigure}
%
%\caption{Comparison of the solutions to \eqref{eq:smooth_poisson} based on the PINN, DRM, and WAN. (\zang{One step of gradient descent in each iteration for the PINN and the DRM.})
%(a) The pointwise absolute error of $|u-u^*|$ with $u$ obtained by PINN (left), DRM (middle), and the proposed WAN (right).
%(b) Progress of relative error versus time in seconds.
%(c) Progress of objective function versus iteration number by PINN (left), DRM (middle), and the proposed WAN (right).}
%\label{fig:strong_solution_test_1step}
%\end{figure}

\subsubsection{High-dimensional nonlinear elliptic PDEs with Dirichlet boundary condition}
\label{subsubsec:nonlinear}
In this example, we apply Algorithm \ref{alg:wan} to a \textit{nonlinear} elliptic PDEs with Dirichlet boundary condition. We test the problem with different dimensions $d=5,10,15,20,25$ as follows,
\begin{equation}\label{eq:nonl_cube}
\begin{cases}
-\nabla\cdot (a(x)\nabla u )+\frac{1}{2}|\nabla u|^2 = f(x) \quad &\text{in}\ \ \Omega \triangleq (-1,1)^d,\\
u(x) = g(x) \quad &\text{on}\ \ \partial\Omega
\end{cases}
\end{equation}
where $a(x)= 1+|x|^2$ in $\Omega$, $f(x)=4 \rho_1^2 (1+|x|^2)\sin{\rho_0^2}-4\rho_0^2\cos{(\rho_0^2)}-(\pi+1)(1+|x|^2)\cos{(\rho_0^2)}+2\rho_1^2\cos^2(\rho_0^2)$ in $\bar{\Omega}$, and $g(x)=\sin(\frac{\pi}{2} x_1^2+\frac{1}{2}x_2^2)$ on $\partial \Omega$, with $\rho_0^2 \triangleq \frac{\pi}{2} x_1^2+\frac{1}{2}x_2^2, \rho_1^2 \triangleq \frac{\pi^2}{4} x_1^2+\frac{1}{4}x_2^2$.
The exact solution of \eqref{eq:nonl_cube} is $u^*(x)=\sin(\frac{\pi}{2} x_1^2+\frac{1}{2}x_2^2)$ in $\Omega$, the cross section $(x_1,x_2)$ of which is shown in the left panel of Figure \ref{subfig:nonlinear_u}.
In this test, we set $K_{\varphi}=1$, $K_u = 2$, $\tau_{\eta}=0.04$, $\tau_{\theta}=0.015$, $N_r=4,000d$, $N_b=40d^2$, and $\alpha=10,000\times N_b$ for $d=5,10$, and $20,000\times N_b$ for $d=15,20$, and $25,000\times N_b$ for $d=25$.
The solution $u_\theta$ after 20,000 iterations for $d=20$ case is shown in the right panel of Figure \ref{subfig:nonlinear_u}, and the absolute pointwise error $|u_\theta-u^*|$ is shown in Figure \ref{subfig:nonlinear_abserr}.
We show the progresses of the relative error versus iteration in Figure \ref{subfig:nonlinear_error_iter}.
After 20,000 iterations, the relative error reaches $0.44\%, 0.62\%, 0.52\%, 0.66\%, 0.69\%$ for $d=5,10,15,20,25$ cases, respectively.
As we can see, the Algorithm \ref{alg:wan} can solve high-dimensional nonlinear PDEs accurately.
\begin{figure}[!t]
\centering
\begin{subfigure}[b]{.46\textwidth}
\includegraphics[width=\textwidth]{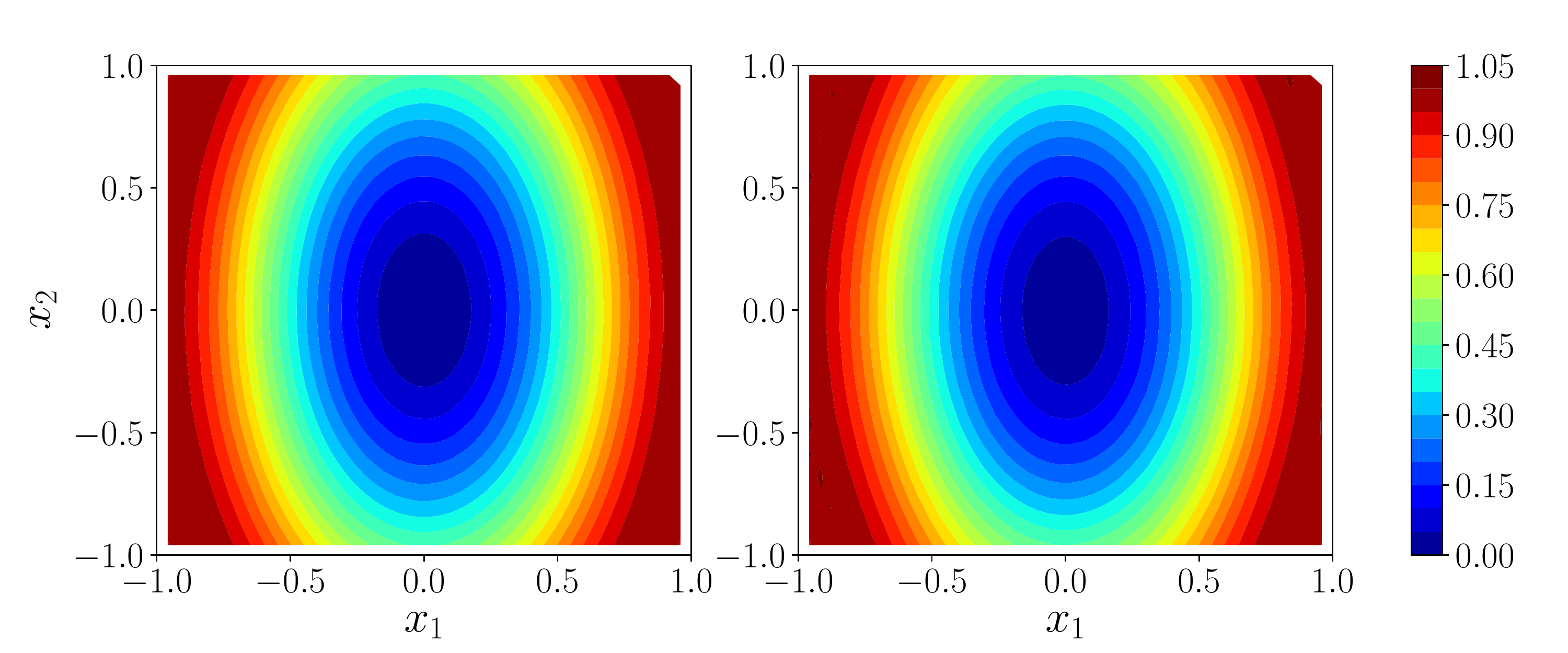}
\caption{True $u^*$ (left) vs estimated $u_\theta$ (right)}
\label{subfig:nonlinear_u}
\end{subfigure}
\begin{subfigure}[b]{.245\textwidth}
\includegraphics[width=\textwidth]{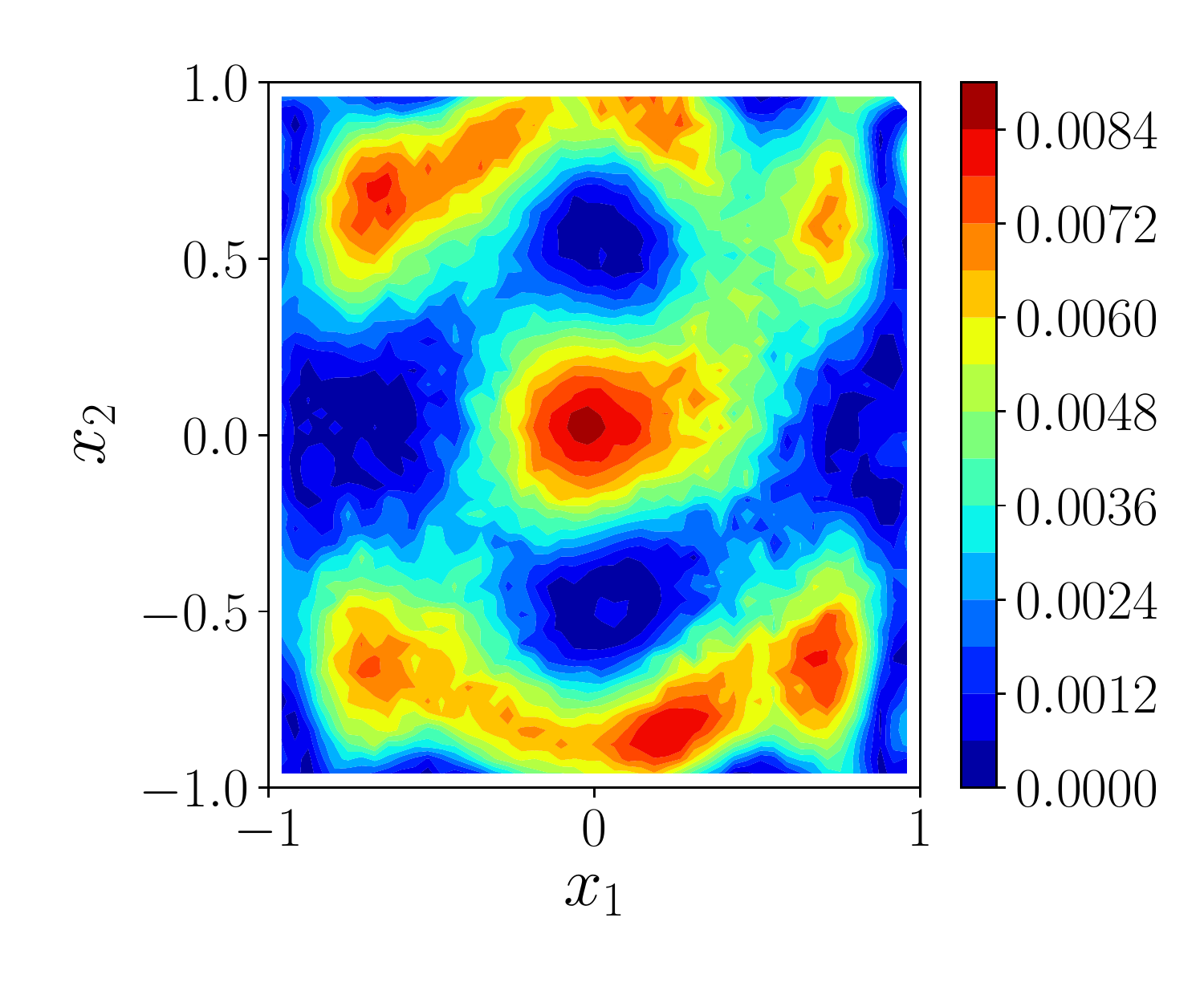} \vspace{-20pt}
\caption{$|u_\theta - u^*|$}
\label{subfig:nonlinear_abserr}
\end{subfigure}
\begin{subfigure}[b]{.23\textwidth}
\includegraphics[width=\textwidth]{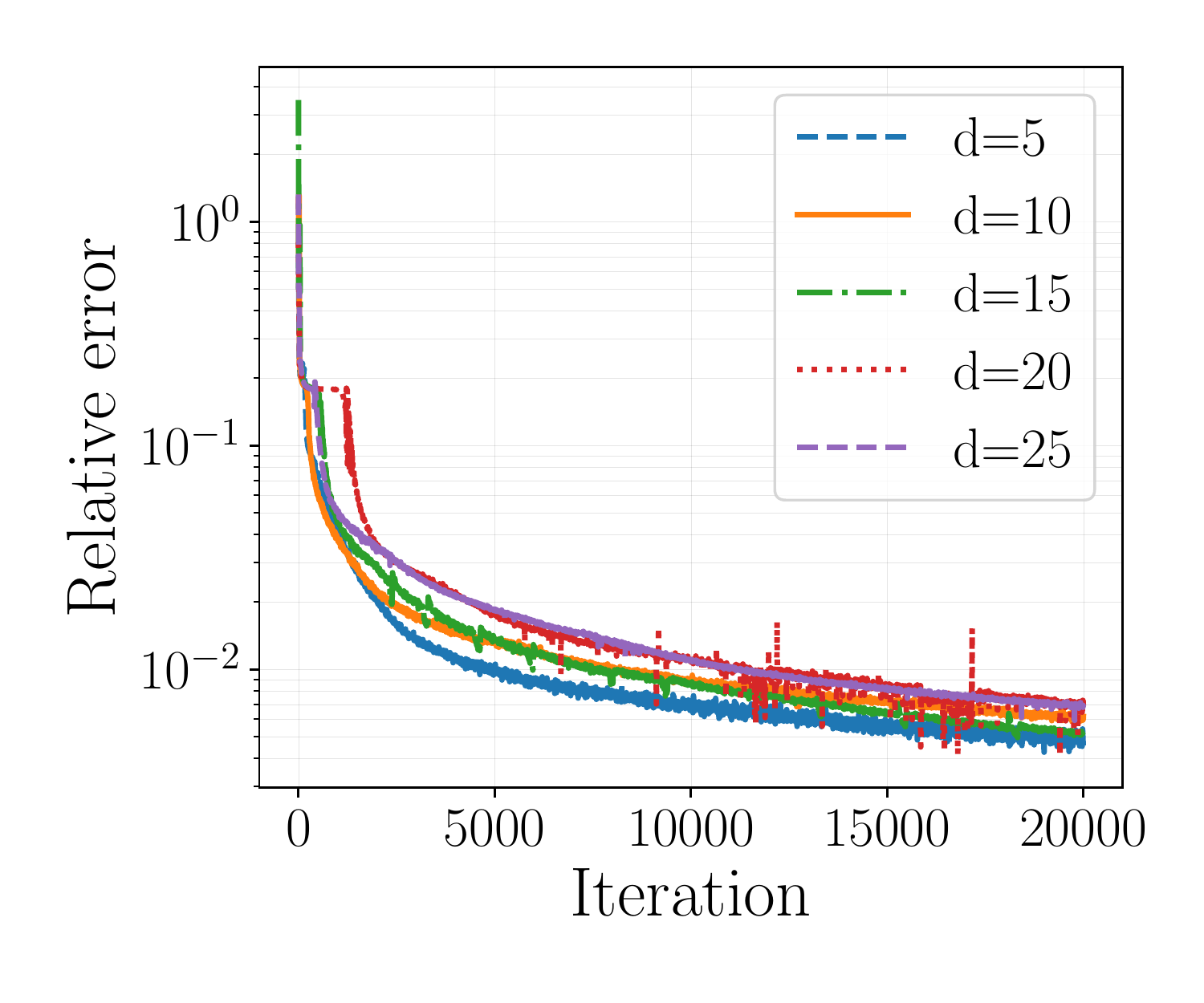} \vspace{-18pt}
\caption{Error vs iteration}
\label{subfig:nonlinear_error_iter}
\end{subfigure}
\caption{Result for the BVP \eqref{eq:nonl_cube} with nonlinear elliptical PDE and Dirichlet boundary condition. (a) True solution $u^*$ and the approximation $u_\theta$ obtained by Algorithm \ref{alg:wan} after 20,000 iterations for $d=20$;
(b) The absolute difference $|u_\theta-u^*|$ for $d=20$;
(c) Relative errors versus iteration numbers for $d=5,10,15,20,25$ cases. For display purpose, images (a) and (b) only show the slices of $x_3=\dots=x_d=0$.}
\label{fig:nonlinear}
\begin{subfigure}[b]{.46\textwidth}
\includegraphics[width=\textwidth]{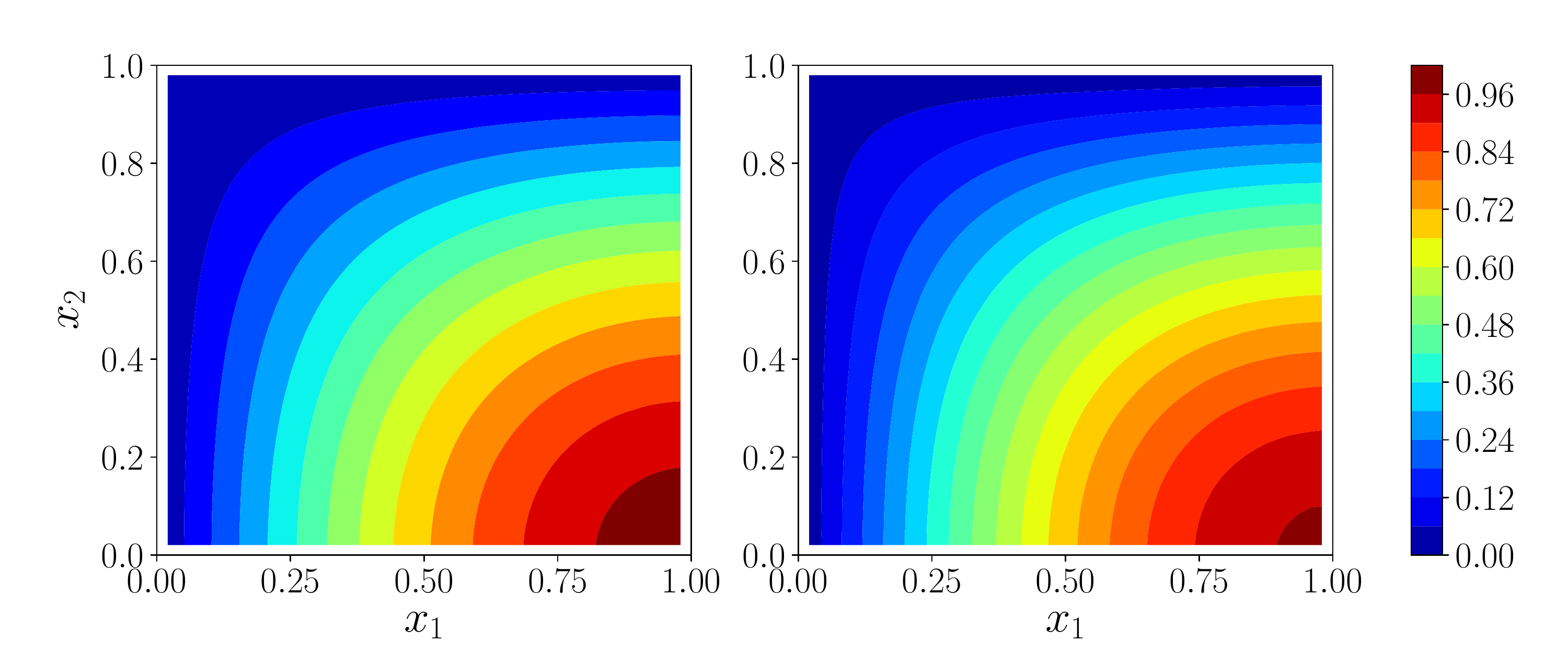}
\caption{True $u^*$ (left) vs estimated $u_\theta$ (right)}
\label{subfig:neumann_u}
\end{subfigure}
\begin{subfigure}[b]{.245\textwidth}
\includegraphics[width=\textwidth]{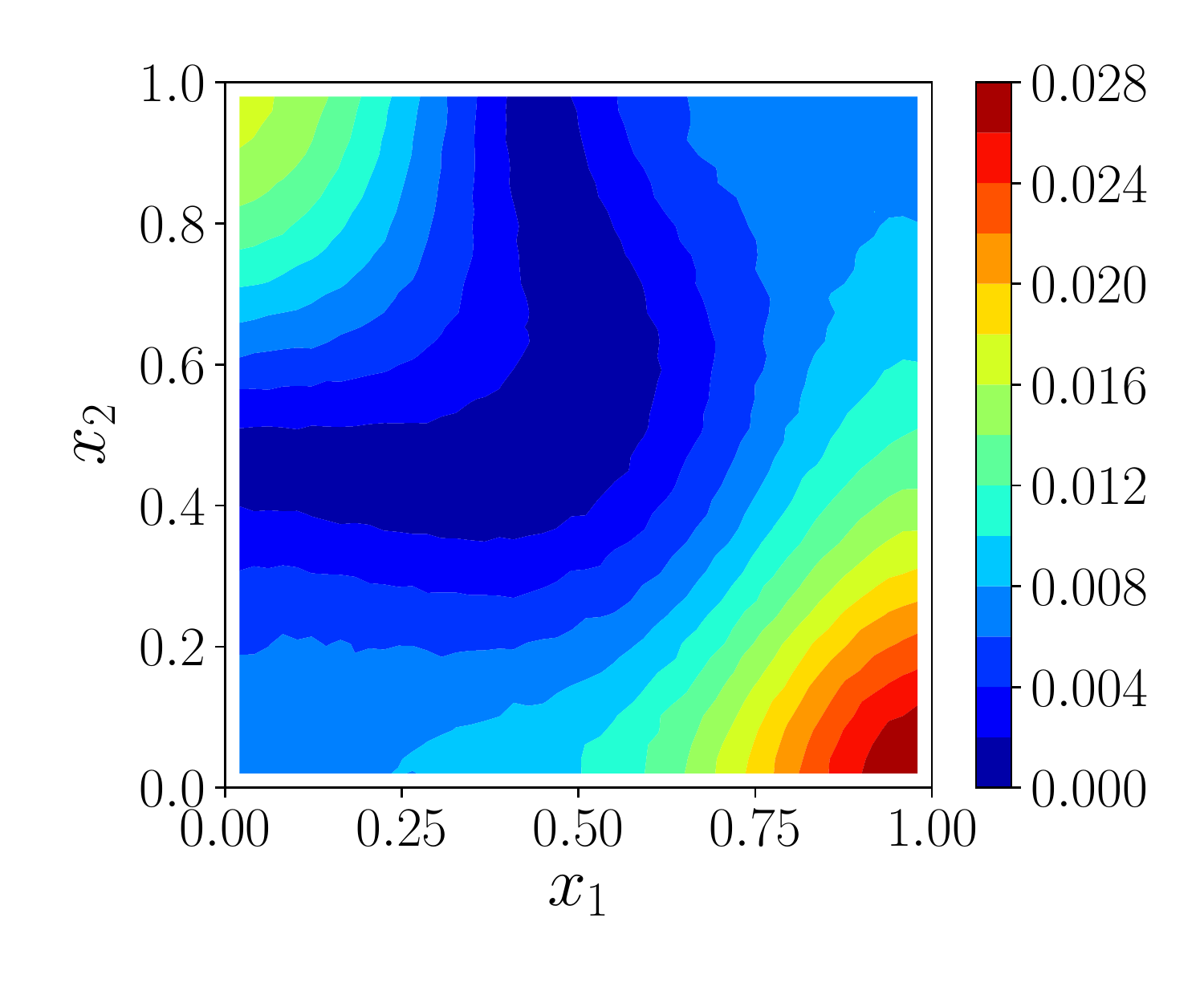} \vspace{-20pt}
\caption{$|u_\theta - u^*|$}
\label{subfig:neumann_abserr}
\end{subfigure}
\begin{subfigure}[b]{.23\textwidth}
\includegraphics[width=\textwidth]{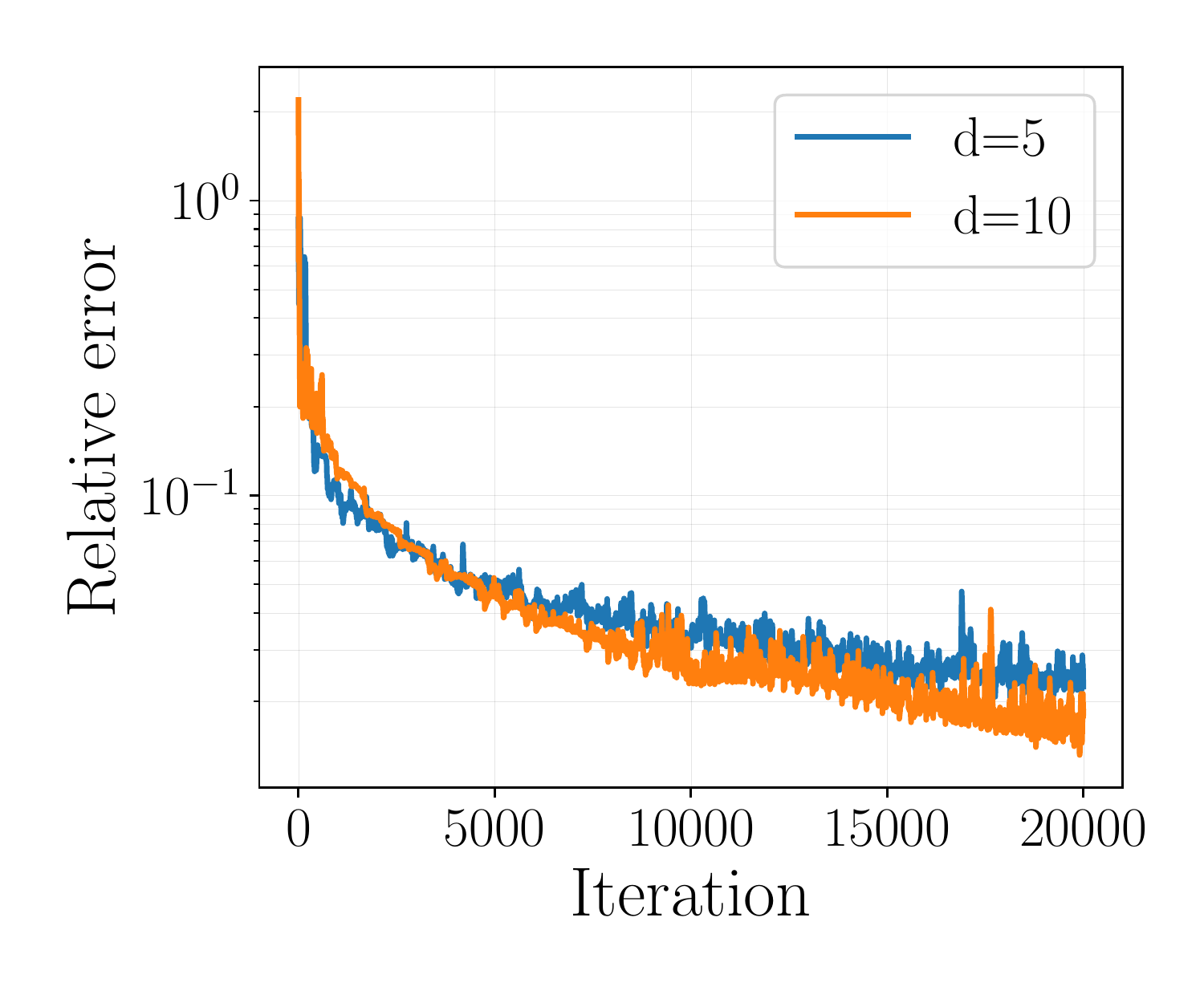} \vspace{-18pt}
\caption{Error vs iteration}
\label{subfig:neumann_error_iter}
\end{subfigure}
\caption{Result for the BVP \eqref{eq:neum_cube} with Neumann boundary condition. (a) True solution $u^*$ and the approximation $u_\theta$ obtained by Algorithm \ref{alg:wan} after 20,000 iterations for $d=10$;
(b) The absolute difference $|u_\theta-u^*|$ for $d=10$;
(c) Relative errors versus iteration numbers for $d=5,10$ cases. For display purpose, images (a) and (b) only show the slices of $x_3=\dots=x_d=0$.}
\label{fig:neumann}
\begin{subfigure}[b]{.46\textwidth}
\includegraphics[width=\textwidth]{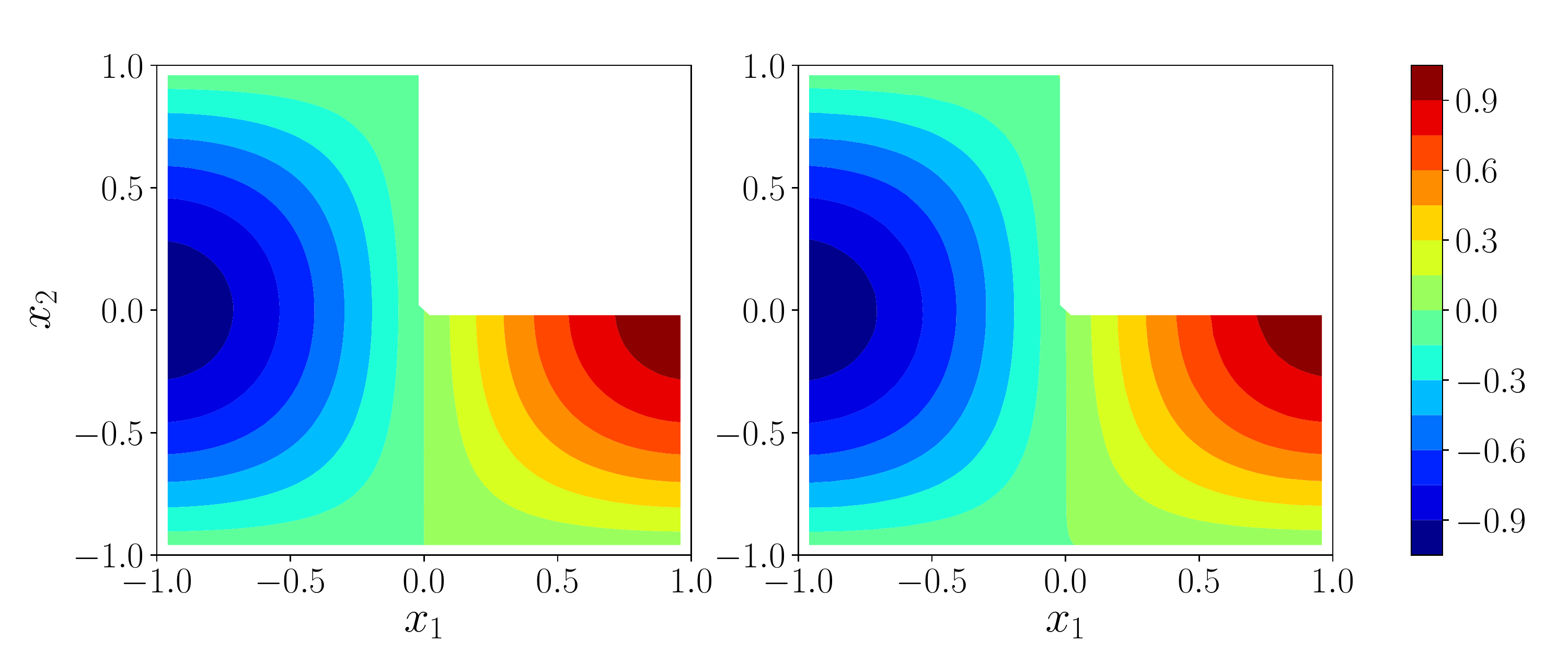}
\caption{True $u^*$ (left) vs estimated $u_\theta$ (right)}
\label{subfig:nonconvex_u}
\end{subfigure}
\begin{subfigure}[b]{.245\textwidth}
\includegraphics[width=\textwidth]{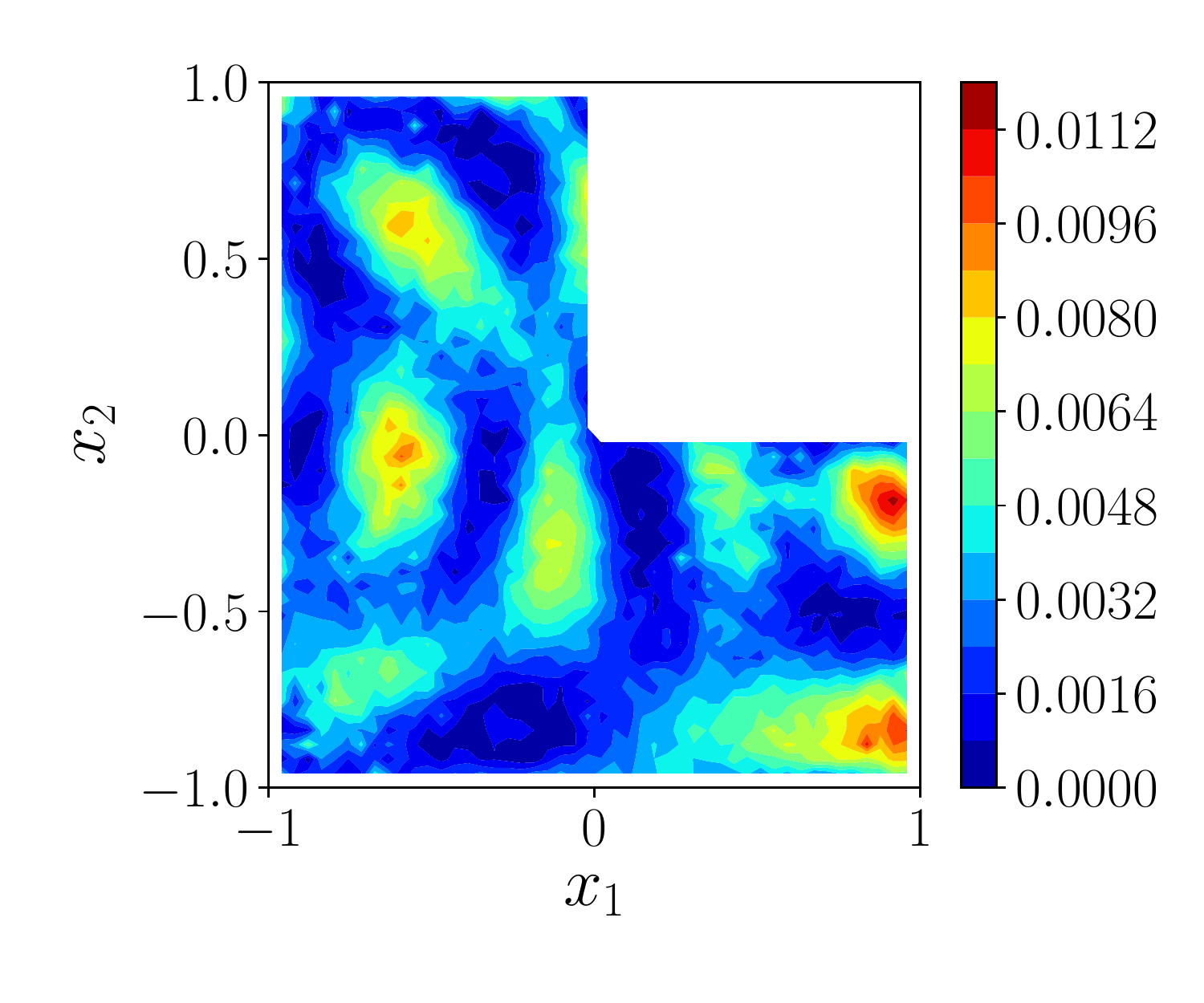} \vspace{-20pt}
\caption{$|u_\theta - u^*|$}
\label{subfig:nonconvex_abserr}
\end{subfigure}
\begin{subfigure}[b]{.23\textwidth}
\includegraphics[width=\textwidth]{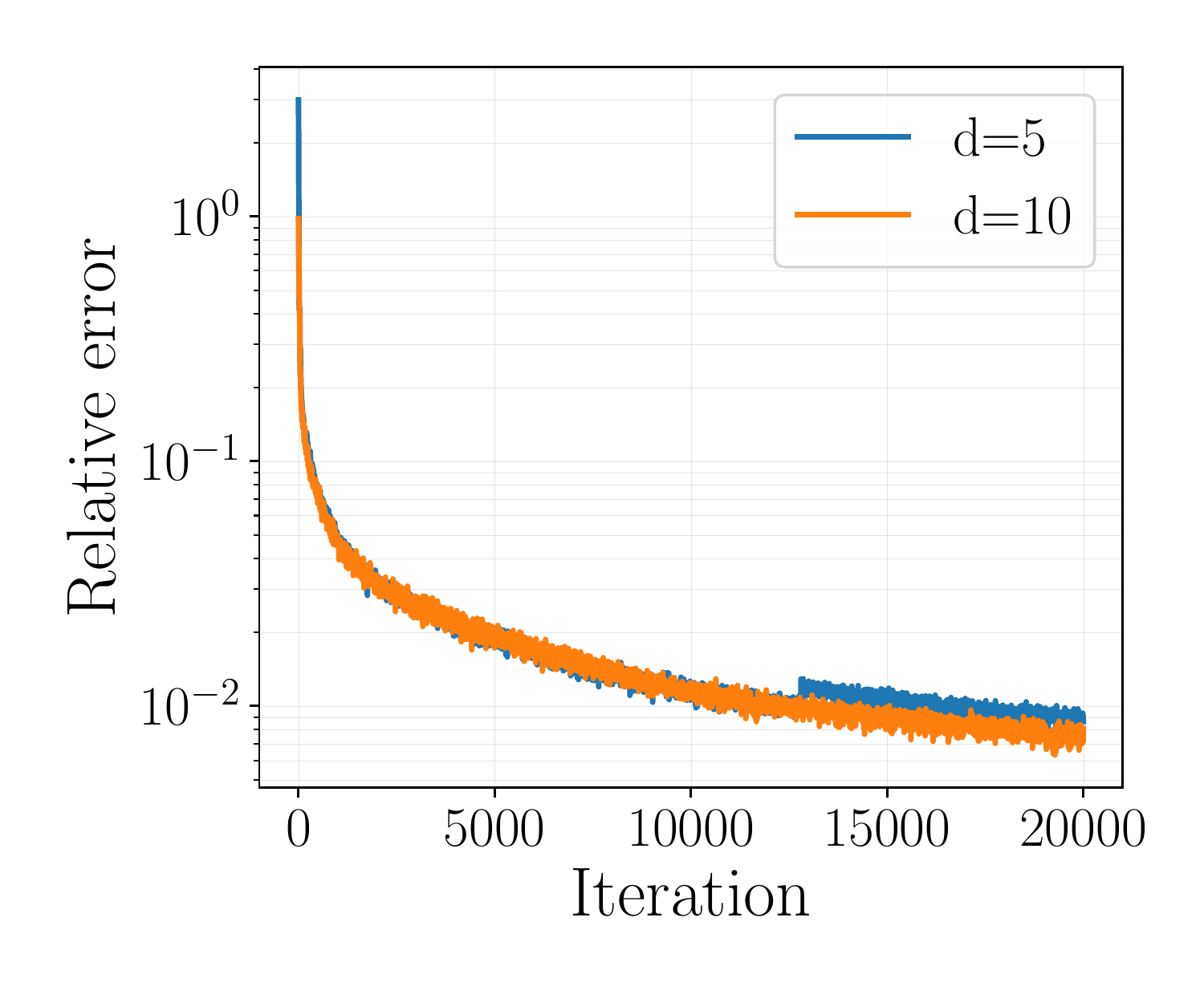} \vspace{-18pt}
\caption{Error vs iteration}
\label{subfig:nonconvex_error_iter}
\end{subfigure}
\caption{Result for the BVP \eqref{eq:poisson_L} with Poisson equation and Dirichlet boundary condition on \textit{nonconvex} domain. (a) True solution $u^*$ and the approximation $u_\theta$ obtained by Algorithm \ref{alg:wan} after 20,000 iterations for $d=10$;
(b) The absolute difference $|u_\theta-u^*|$ for $d=10$;
(c) Relative errors versus iteration numbers for $d=5,10$ cases. For display purpose, images (a) and (b) only show the slices of $x_3=\dots=x_d=0$.}
\label{fig:nonconvex}
\end{figure}

\subsubsection{High-dimensional elliptic PDEs with Neumann boundary condition}
In this experiment, we show that the proposed Algorithm \ref{alg:wan} can be applied to high-dimensional PDEs with \textit{Neumann} boundary condition.
Consider the following boundary value problem:
\begin{equation}\label{eq:neum_cube}
\begin{cases}
-\Delta u+2u= f \quad &\text{in}\ \ \Omega \triangleq (0,1)^d,\\
\partial u/\partial \vec{n} = g \quad &\text{on}\ \ \partial\Omega
\end{cases}
\end{equation}
where $\vec{n}(x) \in \mathbb{R}^d$ is the outer normal at $x\in \partial \Omega$.
We set $f(x)=(\frac{\pi^2}{2}+2)\sin{(\frac{\pi}{2} x_1)}\cos{(\frac{\pi}{2} x_2)}$ in $\Omega$ and
$g(x) = \sbr[1]{\frac{\pi}{2}\cos{\del[1]{\frac{\pi}{2}x_1}}\cos{\del[1]{\frac{\pi}{2}x_2}},\ -\frac{\pi}{2}\sin{\del[1]{\frac{\pi}{2}x_1}}\sin{\del[1]{\frac{\pi}{2}x_2}},\ 0,\cdots,0 }\cdot\vec{n}$ on $\partial \Omega$.
The exact solution of \eqref{eq:neum_cube} in this case is $u^*(x)=\sin{(\frac{x_1}{2}x_1)}\cos{(\frac{\pi}{2}x_2)}$ in $\Omega$ as shown in the left panel of Figure \ref{subfig:neumann_u}.
In this test, we set $K_\varphi=2$, $K_u=5$, $\tau_{\eta}=0.05$, $\tau_\theta=0.02$, $N_r=8\times 10^4$, $N_b=2d\times400$, and $\alpha=1000\times N_b$.
After 20,000 iterations, the relative error of $u_\theta$ to $u^*$ is $2.03\%$ and $1.31\%$ for $d=5$ and $10$ respectively.
The solution is shown in the right panel of Figure \ref{subfig:neumann_u}, whose pointwise absolute error is shown in Figure \ref{subfig:neumann_abserr}.
The progresses of relative error versus iteration number for $d=5,10$ are shown in Figure \ref{subfig:neumann_error_iter}.
This test shows that the proposed algorithm can also work effectively for BVPs with Neumann boundary conditions.

%%%%%%%%%%%%%%%%%%%%%%%%%%%%%%%%%%%%%%%%%%%%%%%%%%%%%%%%%%%%%%%%%%%%%%%%%%%%%%%%%
\subsubsection{Poisson equation on irregular nonconvex domain}
We now consider a BVP with Poisson equation and Dirichlet boundary condition on irregular \textit{nonconvex} domain for problem dimension $d=5,10$ as follows:
\begin{equation}
\label{eq:poisson_L}
\begin{cases}
-\nabla\cdot (a(x)\nabla u )=f(x) \quad & \text{in}\ \ \Omega \triangleq (-1,1)^d \setminus [0,1)^d \\
u(x)= g(x) \quad & \text{on}\ \ \partial\Omega
\end{cases}
\end{equation}
where $a(x)=1+|x|^2$ and $f(x)= \frac{\pi^2}{2}\cdot(1+|x|^2)\sin(\tilde{x}_1)\cos(\tilde{x}_2) +\pi x_2\sin(\tilde{x}_1)\sin(\tilde{x}_2) -\pi x_1\cos(\tilde{x}_1)\cos(\tilde{x}_2)$ in $\Omega$ and $g(x)=\sin(\tilde{x}_1)\cos(\tilde{x}_2)$ on $\partial \Omega$, with $\tilde{x}_i\triangleq (\pi/2)\cdot x_i$ for $i=1,2$.
The true solution is $u^* = \sin(\tilde{x}_1)\cos(\tilde{x}_2)$ in $\Omega$ as shown in the left panel of Figure \ref{subfig:nonconvex_u}.
In this test, $K_{\varphi},K_u,\tau_{\eta},\tau_\theta,N_r,N_b$ are set the same as those in problem (\ref{eq:nonl_cube}), and $\alpha=10,000\times N_b$ and $20,000\times N_b$ for $d=5,10$ respectively.
The solution $u_\theta$ after 20,000 iterations for $d=10$ case is shown in the right panel of Figure \ref{subfig:nonconvex_u}, and the absolute pointwise error $|u_\theta-u^*|$ is shown in Figure \ref{subfig:nonconvex_abserr}.
Again, for both values of problem dimension $d$, we show the progresses of the relative error versus iteration in Figure \ref{subfig:nonconvex_error_iter}.
After 20,000 iterations, the relative error reaches $0.86\%$ and $0.80\%$ for $d=5,10$ cases, respectively.
As we can see, the proposed Algorithm \ref{alg:wan} can easily handle PDEs defined on irregular domains.

\subsubsection{Solving high dimensional parabolic equation involving time}
Next, we consider solving the following \textit{nonlinear} diffusion-reaction equation involving time:
\begin{equation}\label{eq:exp-parabolic}
\begin{cases}
u_t-\Delta u-u^2=f(x,t), &\quad \text{in}\ \Omega\times[0,T] \\
u(x,t)= g(x,t), &\quad \text{on}\  \partial\Omega\times[0,T] \\
u(x,0)= h(x), &\quad \text{in} \ \Omega
\end{cases}
\end{equation}
where $\Omega=(-1,1)^d \subset \mathbb{R}^{d}$. We first give an example of solving the IBVP \eqref{eq:exp-parabolic} in dimension $d=5$ using Algorithm \ref{alg:wan_parab_slice} which discretizes time and uses the Crank-Nicolson scheme \eqref{eq:slice-parab}.
In this test, we set
%\begin{equation*}
$f(x,t)= (\pi^2-2)\sin{({\frac{\pi}{2} x_1})}\cos({\frac{\pi}{2} x_2})e^{-t}-4\sin^2{({\frac{\pi}{2} x_1})}\cos({\frac{\pi}{2} x_2})e^{-2t}$ in $\Omega\times[0,T]$,
%\end{equation*}
$g(x,t)=2\sin({\frac{\pi}{2} x_1})\cos({\frac{\pi}{2} x_2})e^{-t}$ on $\partial\Omega\times[0,T]$ and $h(x)=2\sin({\frac{\pi}{2} x_1})\cos({\frac{\pi}{2} x_2})$ in $\Omega$. In this case, the exact solution of the IBVP \eqref{eq:exp-parabolic} is
$ u(x,t)=2\sin({\frac{\pi}{2} x_1})\cos({\frac{\pi}{2} x_2})e^{-t}$.
We take $T=1$ and discretize the time interval $[0,1]$ into $N=10$ equal segments, and then solve the IBVP using Algorithm \ref{alg:wan_parab_slice} with the setup of $K_{\varphi}, K_u, \tau_{\eta}, \tau_{\theta}, N_r, N_b, \alpha$ at each time step are the same as those in Section \ref{subsubsec:nonlinear}.
Figure \ref{subfig:parabolic_u} shows the exact solution $u^*$ (left) and the solution $u_\theta$ (right) obtained by Algorithm \ref{alg:wan_parab_slice} at final time $T$.
Figure \ref{subfig:parabolic_abserr} shows the point-wise absolute error $|u_\theta(x,T)-u^*(x,T)|$.
%
%We can see that the approximation is very close to the exact solution at final time $T$. In fact, t
The relative error reaches $2.8\%$ after $10,000$ iterations. %, which shows the high accuracy of this method.
The small error implies that the solution obtained by Algorithm \ref{alg:wan_parab_slice} is a close approximation to the true solution $u^*$.
\begin{figure}[!h]
\centering
\begin{subfigure}[b]{0.5\textwidth}
\centering
\includegraphics[width=\textwidth]{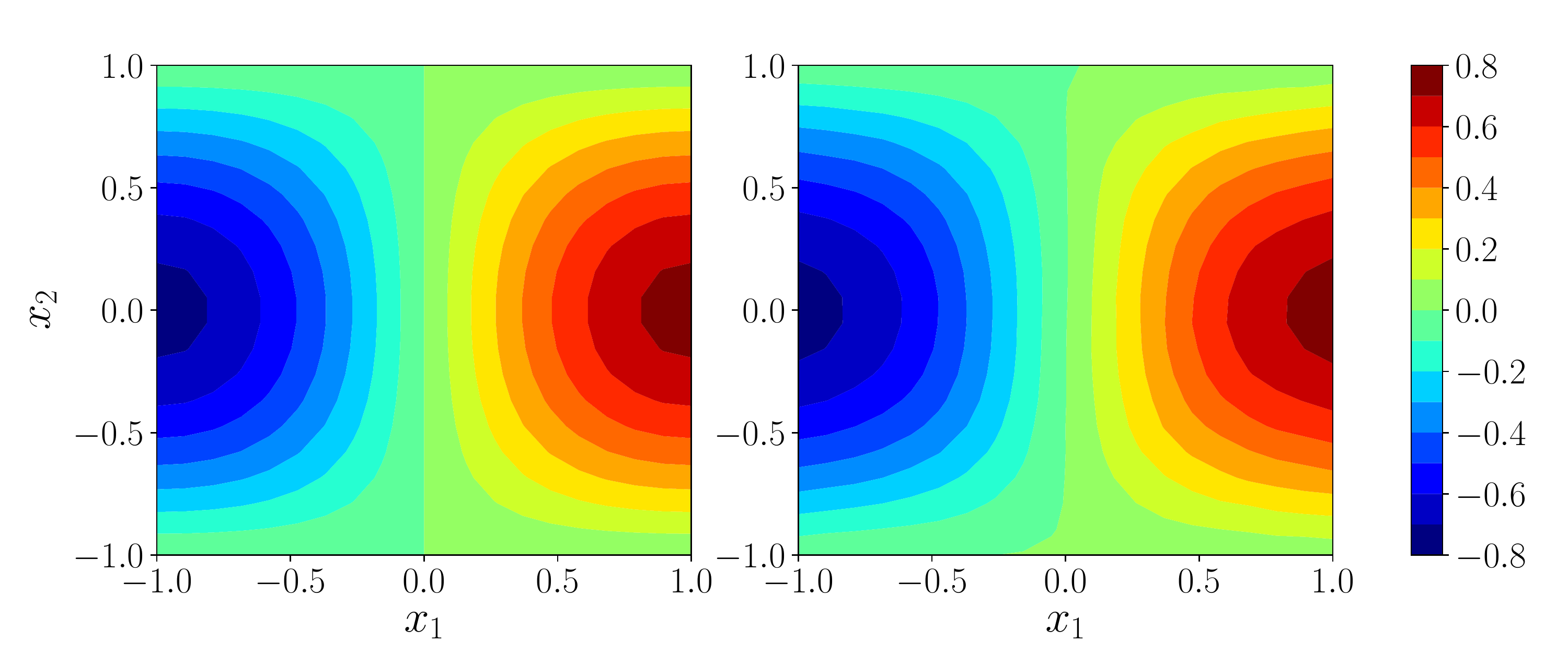}
\caption{True $u^*(x,T)$ (left) vs estimated $u_\theta(x,T)$ (right)}
\label{subfig:parabolic_u}
\end{subfigure}
\begin{subfigure}[b]{0.28\textwidth}
\centering
\includegraphics[width=\textwidth]{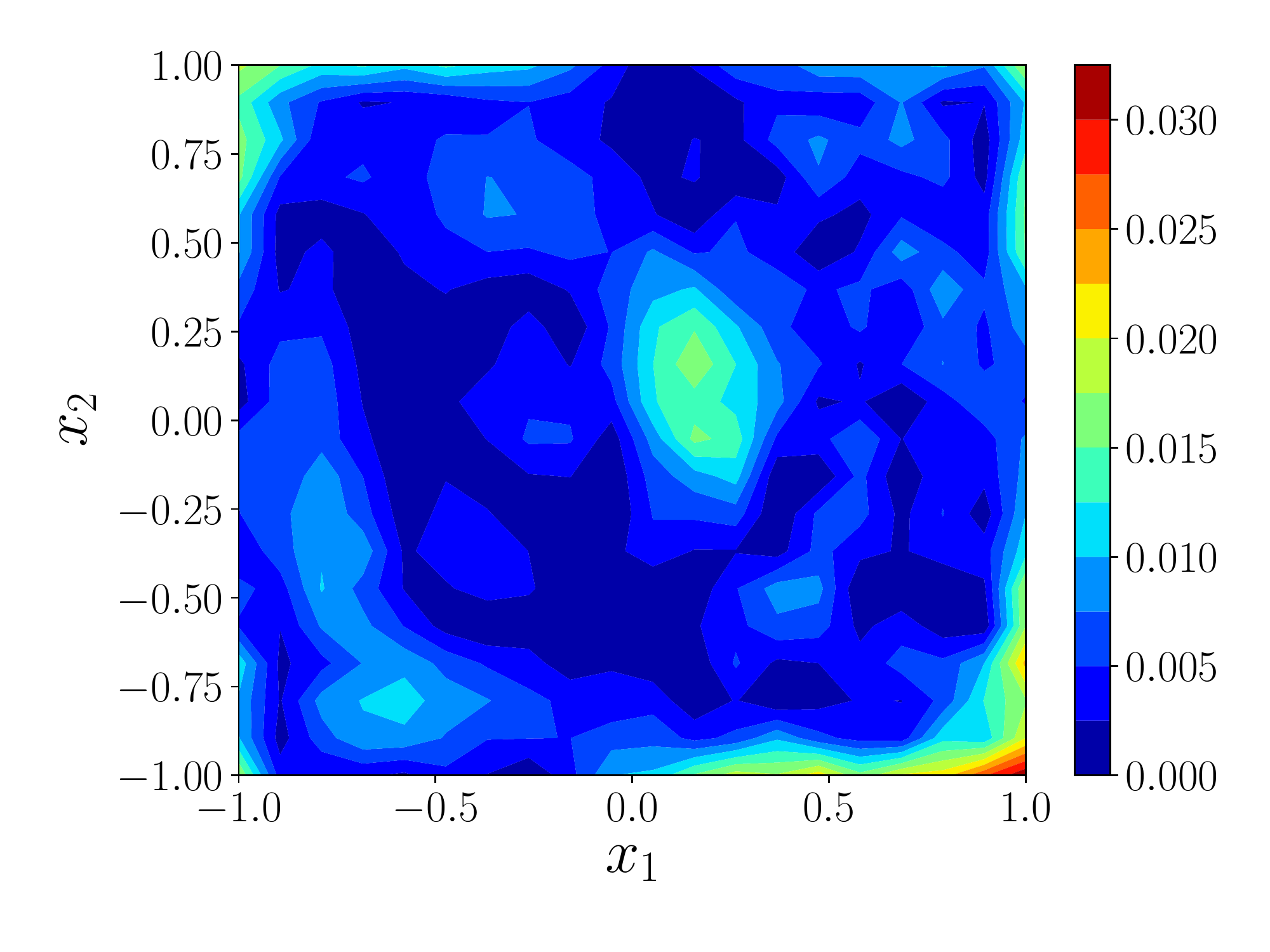} \vspace{-15pt}
\caption{$|u_{\theta}(x,T) - u^*(x,T)|$}
\label{subfig:parabolic_abserr}
\end{subfigure}
\caption{(a) Exact solution $u^*(x,T)$ (left) and the approximation $u_\theta(x,T)$ (right) obtained by Algorithm \ref{alg:wan_parab_slice} at final time $T=1$ for the IBVP \eqref{eq:exp-parabolic} with problem dimension $d=5$. (b) Pointwise absolute error $|u(x,T)-u^*(x,T)|$. All images only show the 2D slice of $x_3=x_4=x_5=0$ for display purpose.}
\label{fig:parabolic}
\end{figure}

We also considered solving the diffusion-reaction equation \eqref{eq:exp-parabolic} for space dimension $d=5,10$ using Algorithm \ref{alg:wan_parab} by dealing with $(x,t)$ jointly without discretization. In this experiment, we set $f(x,t)= (\frac{\pi^2}{2}-2)\sin{(\frac{\pi}{2}x_1)}e^{-t}-4\sin^2\del[1]{\frac{\pi}{2}x_1}e^{-2t}$ in $\Omega\times [0,T]$, $g(x,t)=2\sin({\frac{\pi}{2} x_1})e^{-t}$ on $\partial\Omega\times[0,T]$, and $h(x)=2\sin({\frac{\pi}{2} x_1})$ in $\Omega$.
The exact solution is $u^*(x,t) = 2\sin({\frac{\pi}{2} x_1})e^{-t}$ in $\Omega \times [0,T]$.
In this test, we set $K_{\varphi},K_u,\tau_{\eta},\tau_\theta,N_r,N_b,\alpha$ the same as in Section \ref{subsubsec:nonlinear} , and $N_a=N_b$ and $\gamma=\alpha$.
The solution $u_\theta$ after 20,000 iterations for $d=10$ case is shown in the right panel of Figure \ref{subfig:parab_u}, and the absolute pointwise error $|u_\theta-u^*|$ is shown in Figure \ref{subfig:parab_abserr}.
As has done before, we show the progresses of the relative error versus iteration in Figure \ref{subfig:parab_error_iter}.
After 20,000 iterations, the relative error reaches $0.78\%$ and $0.66\%$ for $d=5,10$ cases, respectively.
Clearly, the Algorithm \ref{alg:wan_parab} can solve high-dimensional nonlinear PDEs involving time accurately.
\begin{figure}
\centering
\begin{subfigure}[b]{.46\textwidth}
\includegraphics[width=\textwidth]{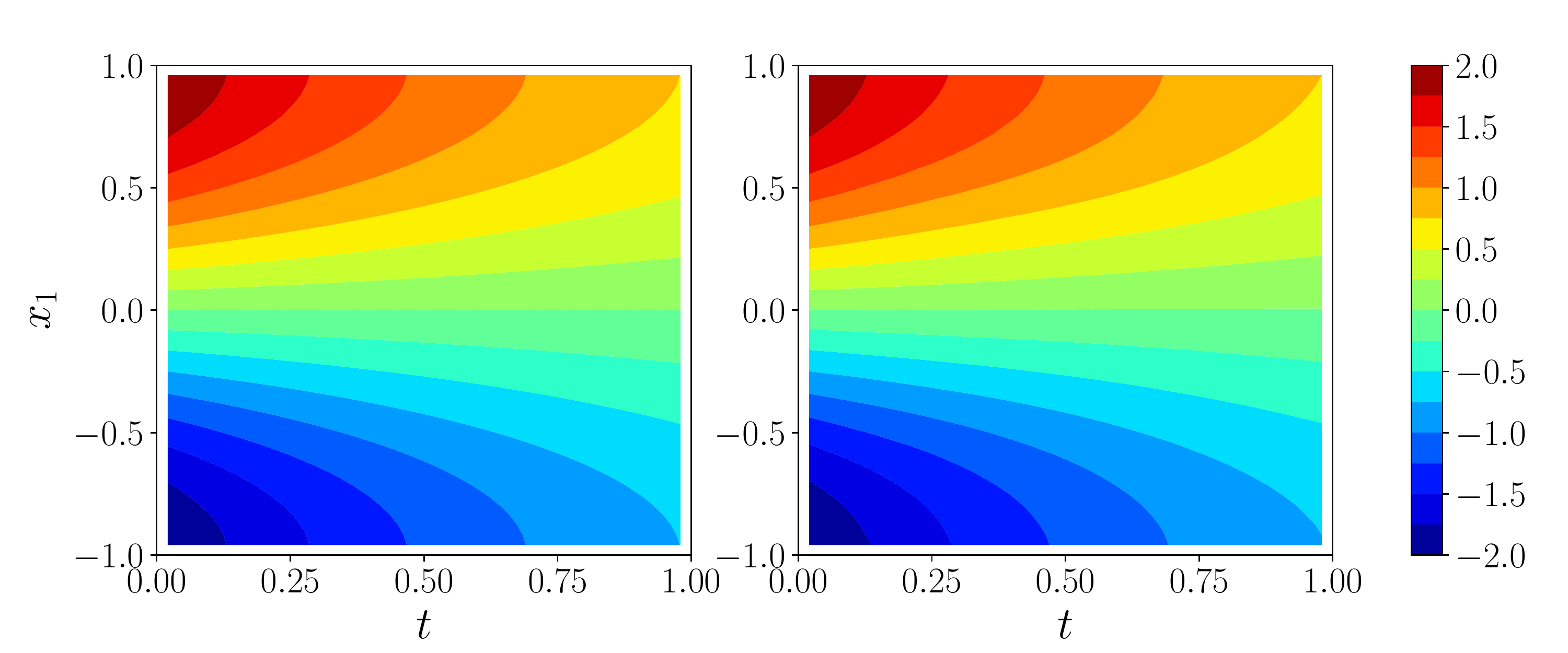}
\caption{True $u^*(x,t)$ (left) vs estimated $u_\theta(x,t)$ (right)} \label{subfig:parab_u}
\end{subfigure}
\begin{subfigure}[b]{.245\textwidth}
\includegraphics[width=\textwidth]{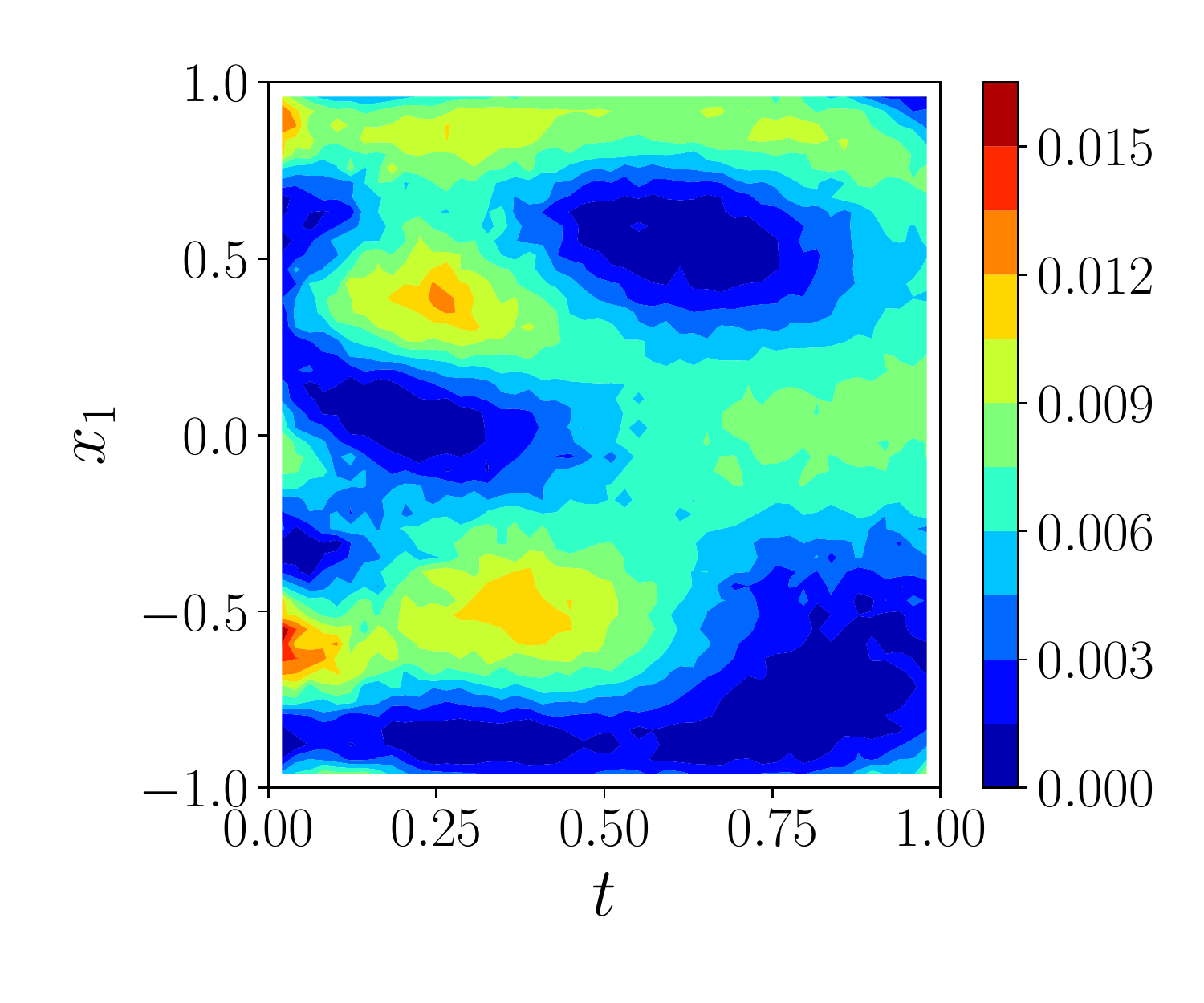} \vspace{-20pt}
\caption{$|u_\theta(x,t)-u^*(x,t)|$} \label{subfig:parab_abserr}
\end{subfigure}
\begin{subfigure}[b]{.23\textwidth}
\includegraphics[width=\textwidth]{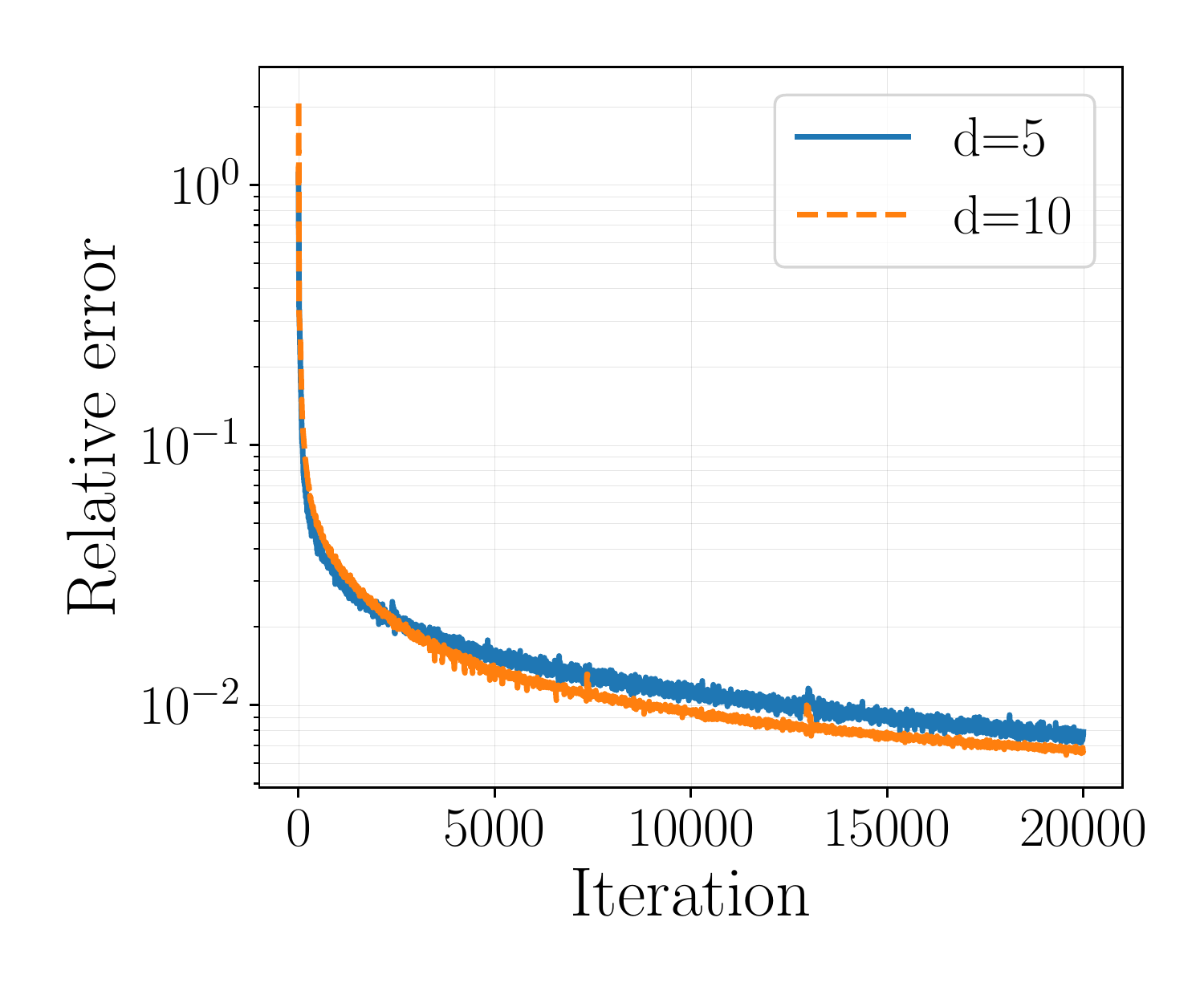} \vspace{-18pt}
\caption{Error vs iteration} \label{subfig:parab_error_iter}
\end{subfigure}
\caption{Result for the IBVP \eqref{eq:exp-parabolic} with a \textit{nonlinear} diffusion-reaction equation. (a) True solution $u^*$ and the approximation $u_\theta$ obtained by Algorithm \ref{alg:wan_parab} after 20,000 iterations for $d=5$;
(b) The absolute difference $|u_\theta-u^*|$ for $d=5$;
(c) Relative errors versus iteration numbers for $d=5,10$ cases. For display purpose, images (a) and (b) show the slices of $x_2=\dots=x_d=0$ for $d\ge 2$.}
\label{fig:parab}
\end{figure}

\subsubsection{Stability and scalability}
In this last set of tests, we evaluate the performance of Algorithm \ref{alg:wan} with different parameter settings and problem dimensionalities.
First, we test the effects of different numbers of collocation points $N_r$ and $N_b$ on the problem \eqref{eq:nonl_cube} with $d=5$.
We set $K_\varphi=1$, $K_u=2$, $\tau_\eta=0.04$ and $\tau_\theta=0.015$ for this test, and then use different numbers of $N_r$ in the region $\Omega$ and $N_b$ on the boundary $\partial\Omega$ (each time with one fixed and the other one varying).
In particular, we run Algorithm \ref{alg:wan} and show the relative error versus running time (the iteration stops when the $L_2$ relative error reaches $1\%$) for varying $N_b$ with fixed $N_r=500$ (Figure \ref{subfig:N_Nr500}) and $N_r=16,000$ (Figure \ref{subfig:N_Nr16000}), and then for varying $N_r$ with fixed $N_b=5$ (Figure \ref{subfig:N_Nb5}) and $N_b=10$ (Figure \ref{subfig:N_Nb20}).
As we can see, in all cases, Algorithm \ref{alg:wan} stably makes progresses towards the weak solution.
Note that more sampled points (larger $N_r$ or $N_b$) do not always yield improvement as shown in Figure \ref{fig:N}.
We suspect that it is due to the severe non-convexity of the problem (from the PDE formulation, the boundary condition, and the deep neural network parameterization) which contains many local minima.
In this case, more sampling points would generally lower the stochastic error of our gradient evaluations, but also reduce the chance for the iterates to escape from local minima.
In terms of real-world performance, more sampled points may improve convergence rate in terms of iterations, but also increase per-iteration computational cost due to more computations of backpropagations.

We also test the effect of network architectures of $u_\theta$.
We try a number of different combinations of layer and neuron numbers for $u_\theta$.
With fixed layer numbers $3$ and $9$, we show the progresses of training with different number of per-layer neurons in Figures \ref{subfig:struct_K3} and \ref{subfig:struct_K9} respectively.
Similarly, with fixed per-layer neuron numbers $10$ and $20$, we show the same training process with varying numbers of layers in Figures \ref{subfig:struct_n10} and \ref{subfig:struct_n20} respectively.
In all of these tests, we can see the relative error of $u_\theta$ gradually decays towards $0$ except the case when the number of neuron is $5$, which indicates sufficient neurons are required to accurately approximate the solution.
In most cases, more layers and/or neurons yield faster decay of relative error, but this is not always the case.
We know that more layers and/or neurons increase representation capacity of the neural network $u_\theta$, but they can introduce much more parameters to train, yield longer training time, and may result in overfitting of the representation.
We plan to investigate this problem in more depth in our future work.

To show the scalability of our method, we plotted the total computation time (in seconds) of Algorithm \ref{alg:wan} applied to BVP \eqref{eq:nonl_cube} for $d=5,10,15,20,25$ in Figure \ref{subfig:struct_t_vs_d}.
This figures shows the times when $u_\theta$ first hits $1\%$ relative error to $u^*$ for these problem dimensions.
It appears that, with the parameter setting we selected, the computation time increases approximately linearly in problem dimension $d$.
This shows that Algorithm \ref{alg:wan} has great potential in scalability for high dimensional PDEs empirically.
\begin{figure}[t!]
\begin{subfigure}[b]{.245\textwidth}
\includegraphics[width=\textwidth]{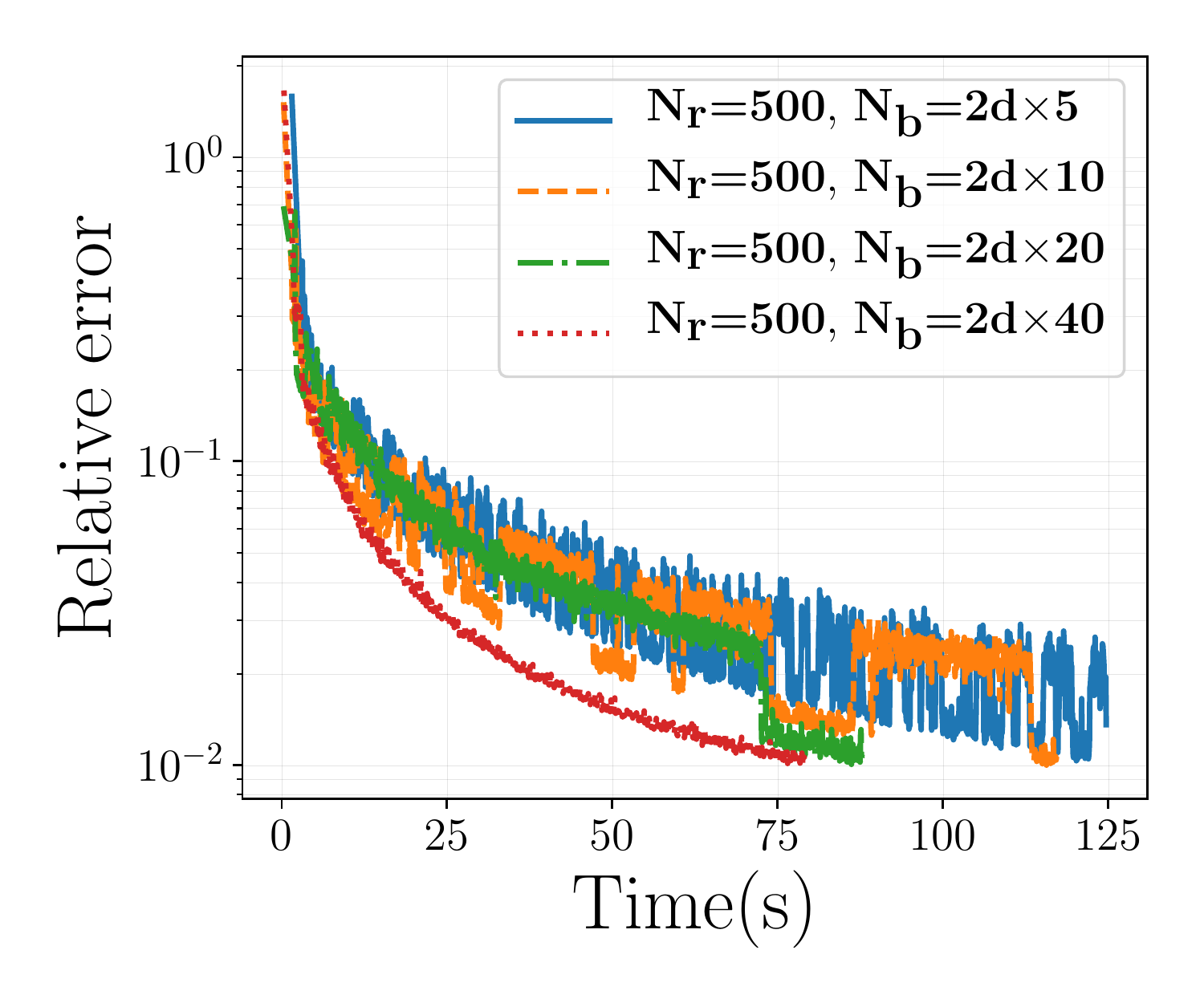} \caption{$N_r=500$} \label{subfig:N_Nr500}
\end{subfigure}
\begin{subfigure}[b]{.245\textwidth}
\includegraphics[width=\textwidth]{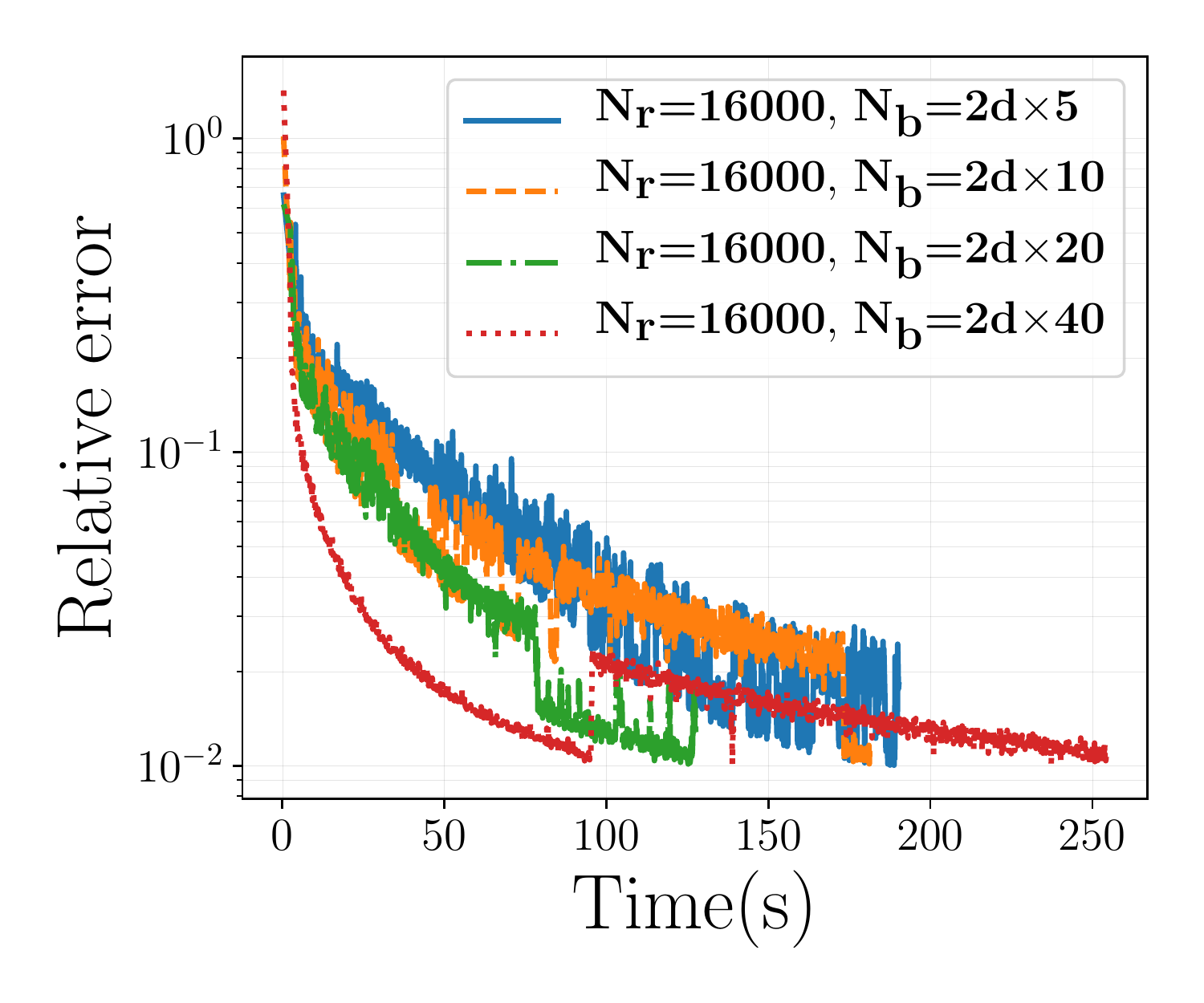} \caption{$N_r=16,000$} \label{subfig:N_Nr16000}
\end{subfigure}
\begin{subfigure}[b]{.245\textwidth}
\includegraphics[width=\textwidth]{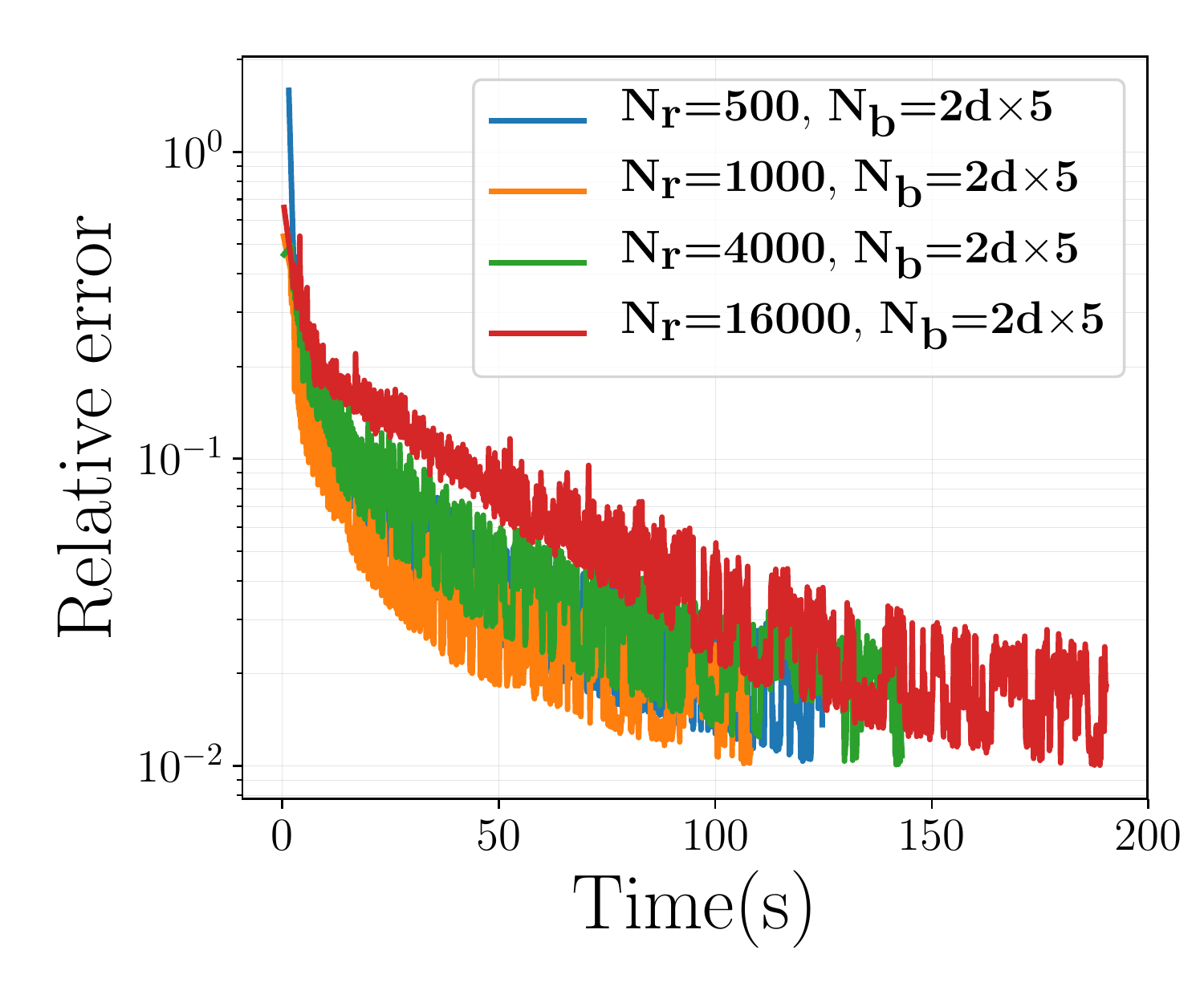} \caption{$N_b=2d\times 5$} \label{subfig:N_Nb5}
\end{subfigure}
\begin{subfigure}[b]{.245\textwidth}
\includegraphics[width=\textwidth]{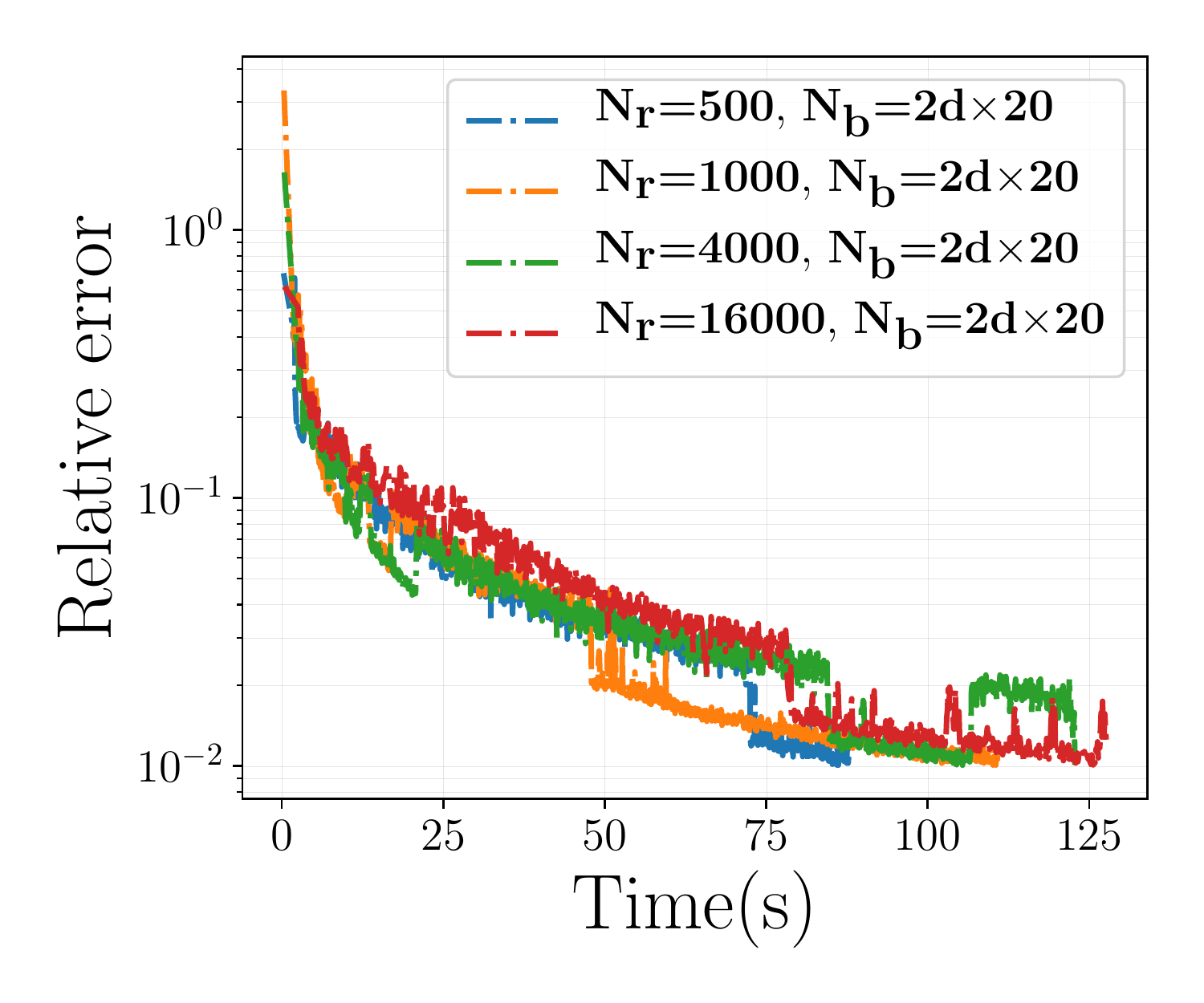} \caption{$N_b=2d\times 20$} \label{subfig:N_Nb20}
\end{subfigure}
\caption{Effects of the numbers of sampled region collocation points $N_r$ and boundary collocation points $N_b$ on the nonlinear elliptical PDE \eqref{eq:nonl_cube} with $d=5$. The progresses of relative error versus running time are shown with varying $N_b$ for (a) $N_r=500$ and (b) $N_r=16,000$, and with varying $N_r$ for (c) $N_b=2d\times 5$ and (d) $N_b=2d\times 20$.}
\label{fig:N}
\begin{subfigure}[b]{.196\textwidth}
\includegraphics[width=\textwidth]{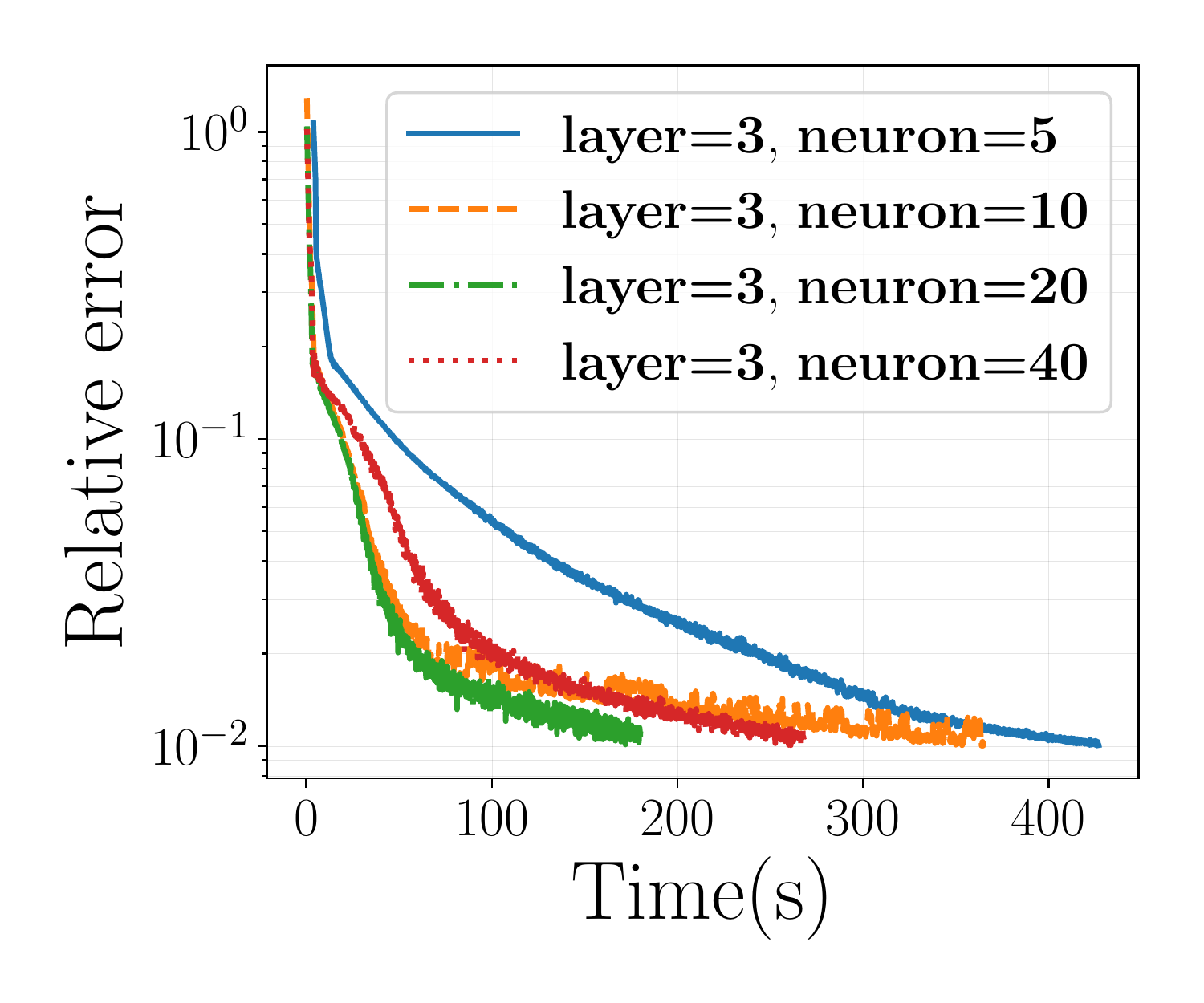} \caption{3 layers} \label{subfig:struct_K3}
\end{subfigure}
\begin{subfigure}[b]{.196\textwidth}
\includegraphics[width=\textwidth]{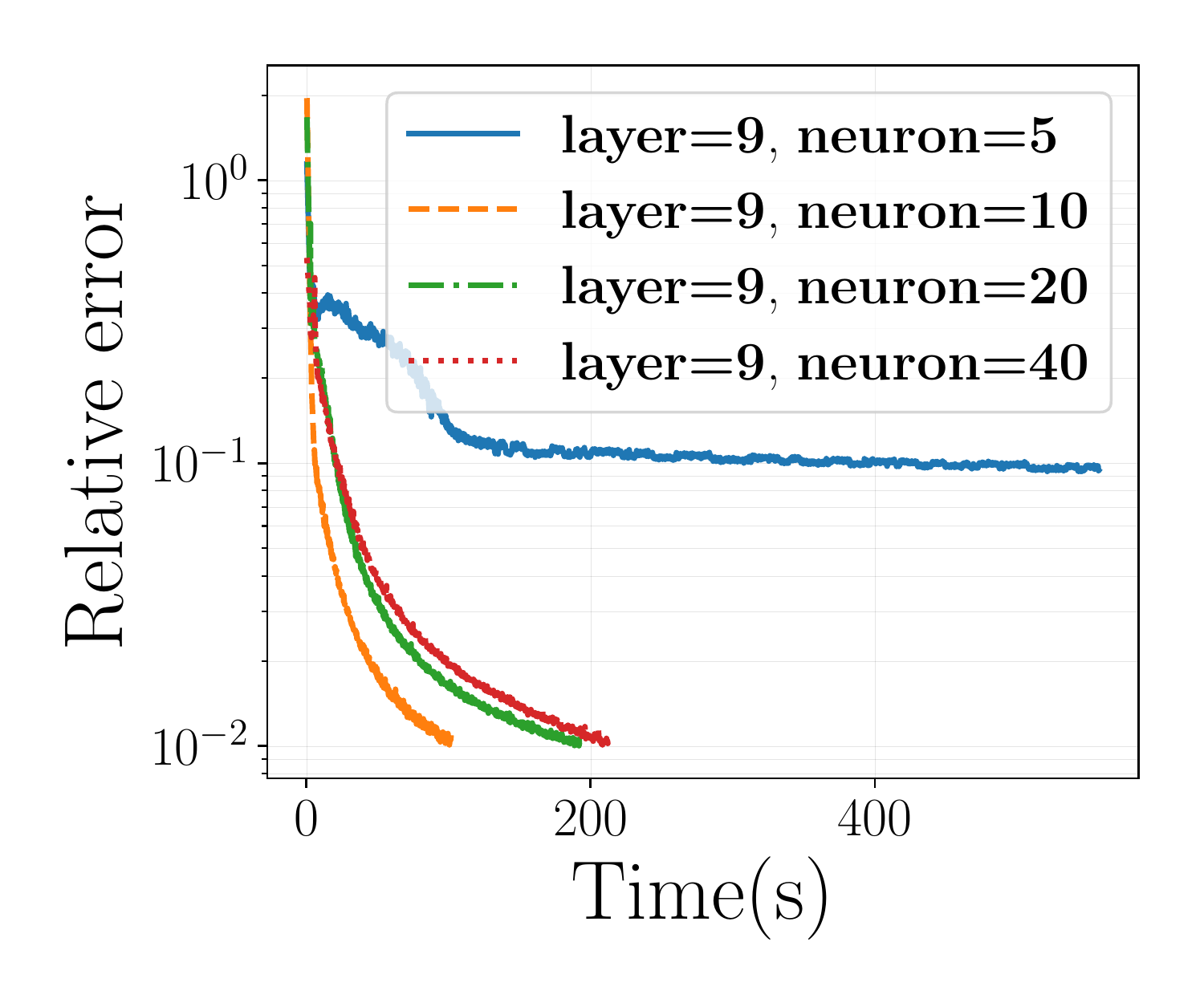} \caption{9 layers} \label{subfig:struct_K9}
\end{subfigure}
\begin{subfigure}[b]{.196\textwidth}
\includegraphics[width=\textwidth]{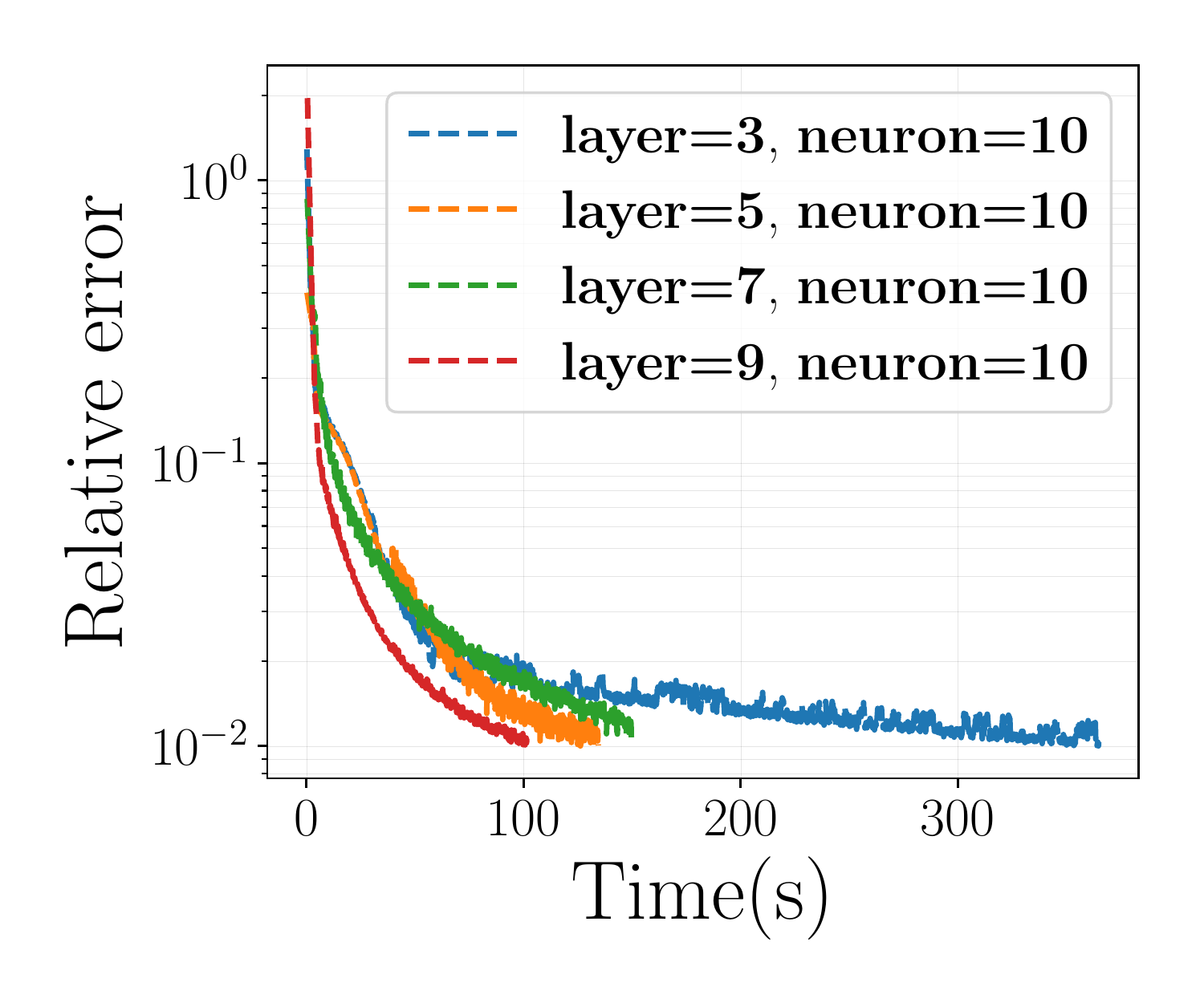} \caption{10 neurons} \label{subfig:struct_n10}
\end{subfigure}
\begin{subfigure}[b]{.196\textwidth}
\includegraphics[width=\textwidth]{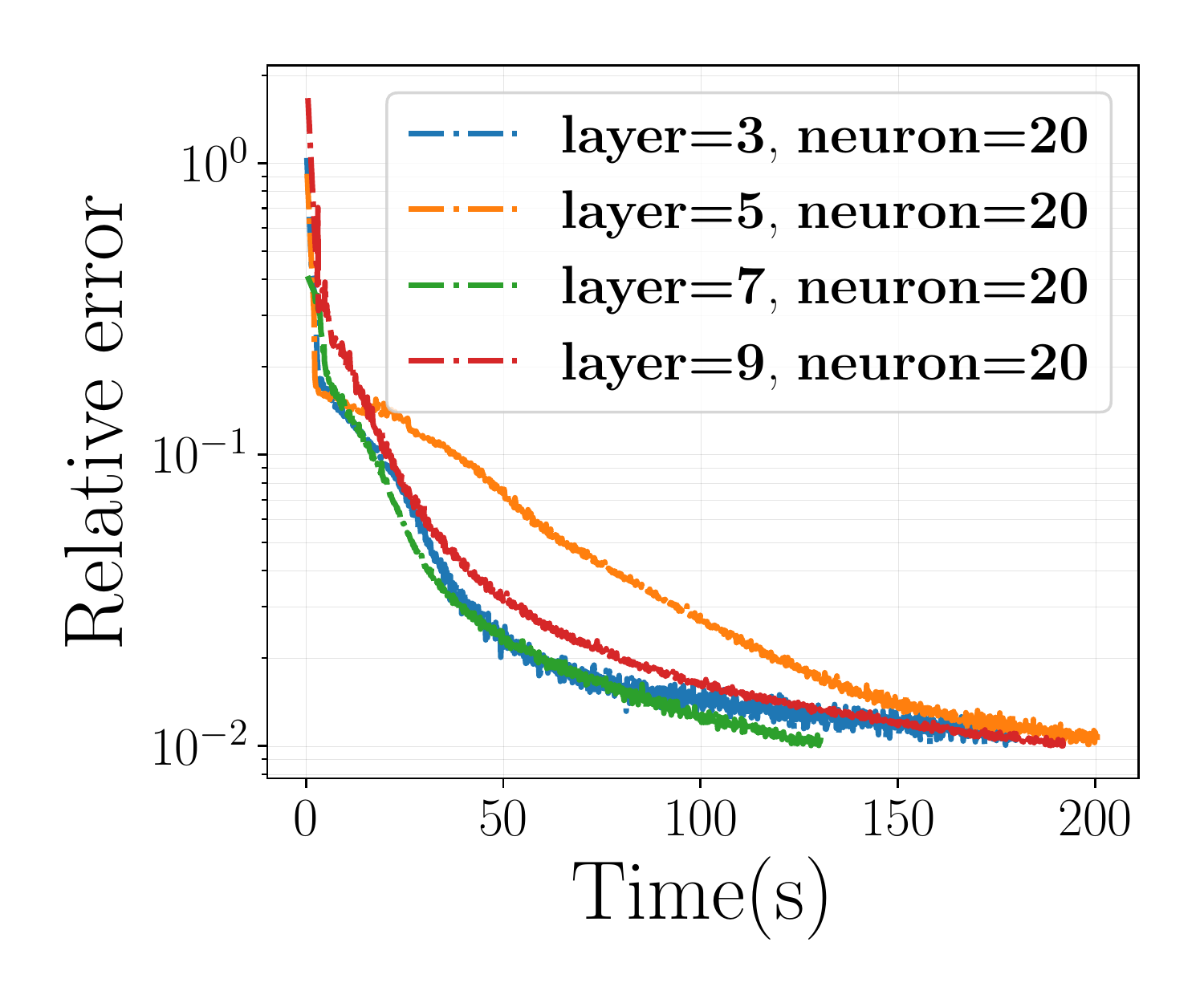} \caption{20 neurons} \label{subfig:struct_n20}
\end{subfigure}
\begin{subfigure}[b]{.196\textwidth}
\includegraphics[width=\textwidth]{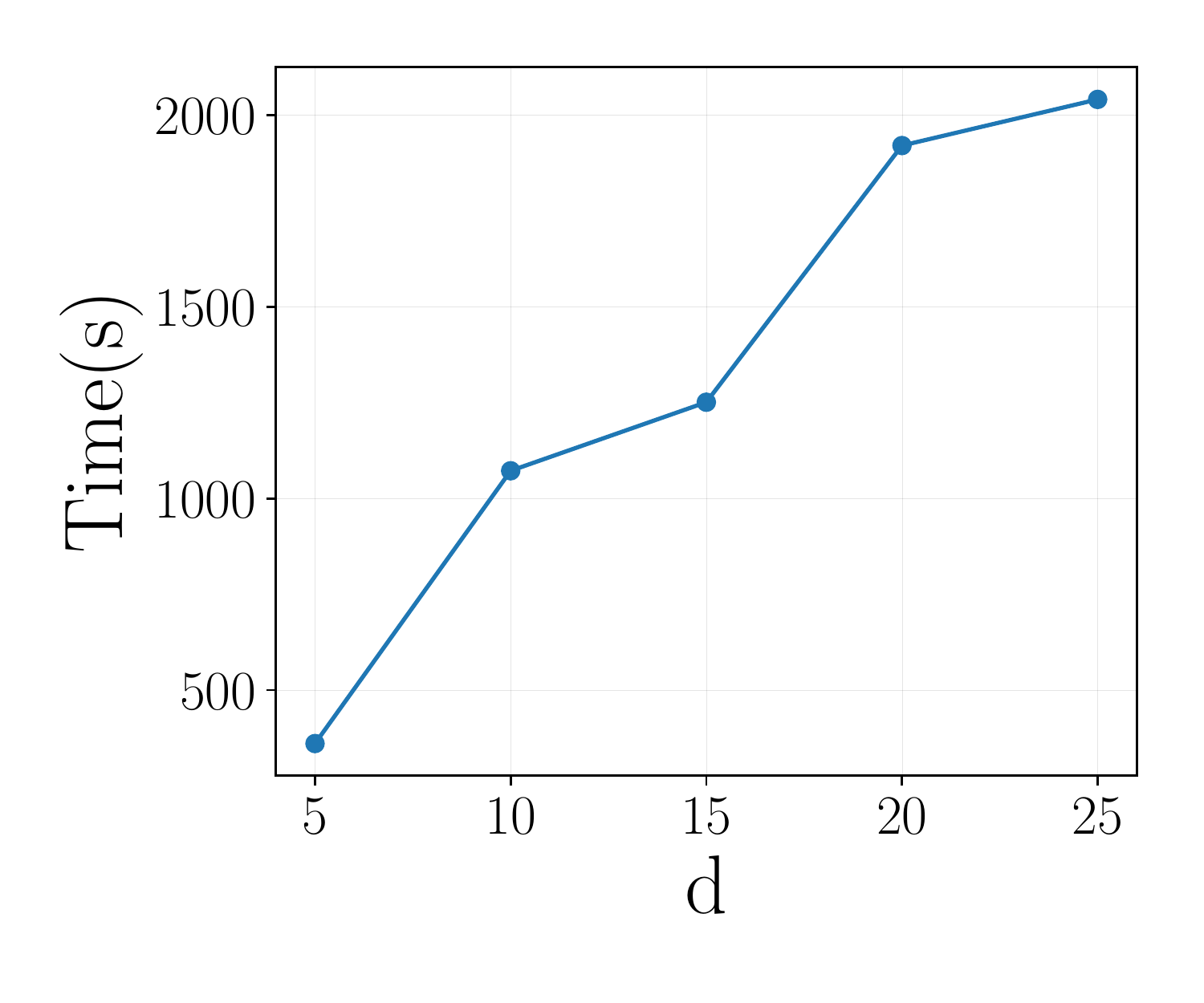} \caption{Time (s) vs $d$} \label{subfig:struct_t_vs_d}
\end{subfigure}
\caption{Effects of the numbers of layers and neurons, as well as the dimensionality, on the nonlinear elliptical PDE problem \eqref{eq:nonl_cube}. The progresses of relative error versus running time are shown with varying number of neurons per layer for a total of (a) 3 layers and (b) 9 layers, and with number of layers for the same number of (c) 10 and (d) 20 neurons per layer; (e) The computation time (in seconds) versus problem dimension $d=5,10,15,20,25$.}
\label{fig:layers}
\end{figure}

%%%%%%%%%%%%%%%%%%%%%%%%%%%%%%%%%%%%%%%%%%%%%%%%%%%%%%%%%%%%%%%%%%%%%%%%%%%%%%%%%%
\section{Concluding Remarks}
\label{sec:summary}

In this paper, we developed a novel approach, called \textit{weak adversarial network} or WAN, to solve general high-dimensional linear and nonlinear PDEs defined on arbitrary domains.
Inspired by the weak formulation of PDEs, we rewrite the problem of finding the weak solution of the PDE as a saddle-point problem, where the weak solution and the test function are parameterized as the primal and adversarial networks, respectively.
The objective function is completely determined by the PDE, the initial and boundary conditions, of the IBVP; and the parameters of these two networks are alternately updated during the training to reach optimum. %as in an unsupervised learning.
%
%The two networks operate pointwisely and hence only require small network architecture.
%
The training only requires evaluations of the networks on randomly sampled collocations points in the interior and boundary of the domain, and hence can be completed quickly on desktop-level machines with standard deep learning configuration.
We demonstrated the promising performance of WAN on a variety of PDEs with high dimension, nonlinearity, and nonconvex domain which are challenging issues in classical numerical PDE methods.
In all tests, WAN exhibits high efficiency and strong stability without suffering these issues.
%

%%%%%%%%%%%%%%%%%%%%%%%%%%%%%%%%%%%%%%%%%%%%%%%%%%%%%%%%%%%%%%%%%%%%%%%%%%%%%%%%%%%%%%%%%%
\section*{Acknowledgement}
YZ would like to acknowledge funding from the China Scholarship Council under grant No.201806320325.
The work of GB was supported in part by a NSFC Innovative Group Fund under grant No.11621101.
The work of XY was supported in part by the National Science Foundation under grants DMS-1620342, CMMI-1745382, DMS-1818886, and DMS-1925263.
The work of HZ was supported in part by National Science Foundation under grants DMS-1620345 and DMS-1830225, and the Office of Naval Research under grant N00014-18-1-2852.
\bibliographystyle{elsarticle-num}
\bibliography{wan_jcp_r1}

\end{document}